\documentclass[twoside]{article}

\usepackage[preprint]{aistats2027}
\usepackage[round]{natbib}

\usepackage{amsmath}
\usepackage{amsthm}
\usepackage{amssymb}
\usepackage{dsfont}

\usepackage{graphicx}

\usepackage{algorithm}
\usepackage{hyperref}

\usepackage{booktabs}

\theoremstyle{plain}
\newtheorem{theorem}{Theorem}[section]
\newtheorem{proposition}[theorem]{Proposition}
\newtheorem{lemma}[theorem]{Lemma}

\theoremstyle{definition}
\newtheorem{definition}[theorem]{Definition}

\theoremstyle{remark}
\newtheorem{remark}[theorem]{Remark}

\newcommand{\indep}{\perp\!\!\!\perp}
\newcommand{\e}[1]{\mathbb{E}\left[#1\right]  } 
\newcommand{\ind}{\mathds{1}}

\usepackage[textwidth=3cm]{todonotes}

\usepackage{tikz}
\usetikzlibrary{arrows.meta}
\usetikzlibrary{calc}

\usepackage{multirow, graphicx}
\usepackage{booktabs}

\begin{document}

%

%

\twocolumn[

\aistatstitle{Sequential Conditional Independence Testing with Machine Learning Models}

\aistatsauthor{ Angel Reyero-Lobo \And Michele Meziu \And  Sebastian Uriel Arias \And Peter Grünwald }

\aistatsaddress{
Université de Toulouse \\ Inria Paris-Saclay
\And CWI
\And CWI
\And CWI \\ Leiden University
} ]

\begin{abstract}
  Conditional independence testing is a ubiquitous problem in scientific discovery.
The widely employed model-X assumption shifts the modelling burden from the dependence of the output on the inputs to the dependencies within the inputs.
Log-optimal e-variables have been studied in this setting, but it remains unclear how to incorporate machine learning models into their design.
Other approaches test exchangeability directly, yielding an e-variable with lower power in theory but, surprisingly, higher power in practice. We explain this phenomenon by decomposing the error  into \textit{null enlargement}, \textit{approximation}, and \textit{estimation} error.
The decomposition shows that GRO e-variable estimates can be beaten because of their worse approximation and estimation errors, and we explore intermediate null hypotheses between model-X conditional independence and exchangeability to reduce these errors.
Moreover, the model-X assumption often only holds  up to an estimation error, invalidating exact type-I error guarantees. We provide estimation
error bounds that accommodate \textit{triple} robustness results, achieving fast convergence rates.
\end{abstract}

\section{Introduction}
There has been a growing interest in recent years in \textit{conditional independence testing}, notably in the context of scientific discovery. A representative example is identifying which genes are associated with a disease. Performing such tests prevents spurious discoveries caused by high correlations with features that are themselves important.
%
%
Formally, we test whether a given input $X\in \mathcal{X}\subseteq\mathbb{R}$ is independent of an output $Y\in \mathcal{Y}\subseteq \mathbb{R}$ given the remaining inputs $Z\in\mathcal{Z}\subseteq\mathbb{R}^d$, namely $X \indep Y \mid Z$.
This assesses whether the input $X$ provides information about the output that is not contained in the others. If so, the feature is considered \textit{important}. The set of such important features, namely the Markov blanket, is well defined and unique whenever no input is an exact function of the others \citep{candes2018panning, LocoVSShapley}.

Moreover, while traditional statistics has mainly focused on modeling the output as a function of the inputs, as in generalized linear models, \citet{candes2018panning} introduced the \textit{model-X} assumption, which shifts the burden of modeling toward the dependence structure among the inputs themselves. This arguably milder assumption relies on the idea that the relationships among the inputs are often easier to model than their relationship with the output. For example, the inputs may be measurements collected from a common device and linked to a more complex phenomenon, as in the gene--disease setting. Alternatively, labeled data may be expensive to obtain, while unlabeled data is available for studying the inputs dependence structure.

Furthermore, in related areas such as Interpretable Machine Learning, where the aim is to identify the \textit{important} features contributing to the predictive accuracy of a model, several works have empirically and theoretically supported this assumption. Indeed, many standard methods rely on comparing the model performance when using the feature under study against the performance obtained when restricting its access. These works have shown improved results for methods based on modeling the dependence structure among the inputs (e.g.\ Conditional Feature Importance) rather than retraining a new model without the feature while still focusing on the relationship with the output (e.g.\ Leave-One-Covariate-Out) (see 
\cite{chamma2024statistically, paillard25a}).

The increasing use of Machine Learning (ML) has led to remarkable predictive performance on complex data (\cite{hollmann2023tabpfn, qu2026tabiclv2}). 
Consequently, \textit{model-agnostic} conditional independence approaches have become particularly relevant, as they can leverage these black-box predictors as surrogates for the underlying data-generating distribution. 

In addition, the data collection process is no longer static, and scientists increasingly require more flexible inference procedures. For instance, when a $p$-value is close to a desired significance threshold, it is tempting to continue collecting data. Similarly, data may come from multiple studies, making it unclear how to combine evidence appropriately. Standard $p$-value procedures do not generally control type-I error under such flexible settings. For this reason, there has been growing interest in the study of \textit{e-values} (\cite{grünwald2023safetesting, Ramdas_2025}). The e-value is a building block of Safe Anytime-Valid Inference (SAVI). 
Its sequential counterpart, the e-process, provides an interpretable measure of evidence (\cite{Koning2024ContinuousTU}) and allows practitioners to continuously monitor the data and decide whether additional data collection is necessary or sufficient evidence has already been obtained.

Building on \citet{shaer23a}, we aim to further study how to sequentially assign the importance of a feature within a black-box machine learning model under the model-X assumption, using conditional independence testing. Our approach compares the loss of the original model to that of a modified version where the feature's information is removed and replaced with a conditionally independent counterpart generated under the model-X assumption. Moreover, we study the statistical guarantees when this assumption is violated.


\section{Problem statement}

The model-X assumption consists of fixing a kernel $K(X \mid Z)$ for the conditional distribution of $X$ given $Z$.Thus, the model-X conditional independence null is

\begin{align}
    \mathcal{P}^\textnormal{X-CI}_{K}&:=\notag \left\{ P(X, Y, Z):\right. \\ &\left.  X\indep Y \mid Z,\ P(X\mid Z)= K(\cdot\mid Z)\right\}.\label{eq:mx-null-hyp}
\end{align}

By introducing a dummy variable $\widetilde{X}$ independent of $X$ and $Y$ given $Z$, this model-X conditional independence null can equivalently be viewed (see Proposition \ref{prop:mx_as_cond_exch}) as the conditional exchangeability of $\widetilde{X}$ and $X$ given $(Y,Z)$, where the same fixed kernel is imposed on $\widetilde{X}$:

\begin{align}\notag
    \mathcal{P}^\textnormal{X-CI}_{K}=&\left\{P(\widetilde{X}, X, Y, Z): (\widetilde{X}, X, Y, Z)\overset{\mathrm{d}}{=}(X, \widetilde{X}, Y, Z),\right. \\  &\left. \qquad P(\widetilde{X}\mid X, Y, Z)= K(\cdot\mid Z)\right\},\label{eq:mx-null-hyp2}
\end{align}

To test this null, \citet{candes2018panning} proposed the Conditional Randomization Test (CRT), which compares an importance statistic $T(X,Y,Z)$ computed on the observed data with its distribution under the null. This is achieved by replacing the original $X$ in the sample with the dummy $\widetilde{X}$ sampled from $K(X \mid Z)$. The procedure then computes the importance statistic for many conditional resamplings, namely $T(\widetilde{X},Y,Z)$, and compares them with the original statistic. This yields $p$-values, and they also extended the idea to multiple testing through the popular knockoffs. The performance of the CRT depends strongly on the choice of $T$, and a growing literature proposes different alternatives, ranging from predictive performance measures of machine learning models \citep{Tansey02012022} to coefficients of linear models \citep{liu2022fast}.



In the sequential setting, the use of model-X to test conditional independence was also studied 
by
\citet{Grünwald02042024}. They proposed the use of e-values, which are nonnegative random variables whose expectation is upper bounded by $1$ under the null. The set of model-X e-values $\mathfrak{E}$ is thus
\[
\mathfrak{E}=\left\{E\geq 0: \mathbb{E}_P[E]\leq 1 \text{ for all }P\in \mathcal{P}_{K}^\textnormal{X-CI}\right\}.
\]
To analyze the power of an e-value $E$ against a point alternative $Q$, a standard notion is the e-power \citep{vovk_wang_2024}, also known as log-power \citep{Koning2024ContinuousTU}, which is the expectation under the alternative of the logarithm of the e-value, $\mathbb{E}_Q[\log E]$. The Growth Rate Optimal (GRO) e-value \citep{grünwald2023safetesting} is the one that maximizes this criterion. \citet{Grünwald02042024} proposed the \textit{model-X} e-value
\begin{align}\label{eq:mx_definition}
    E^{\textnormal{X-CI}}
    :=
    \frac{f_{Y \mid X,Z}}{f_{Y \mid Z}}
    =
    \frac{f_{Y \mid X,Z}}{\mathbb{E}_{X \mid Z}[f_{Y \mid X,Z}]},
\end{align}
which is GRO against a point alternative $Q$ that satisfies the model-X kernel and has conditional density $f_{Y \mid Z,X}$. 
However, this density is generally unknown, and it is not straightforward to estimate it or to relate it to machine learning models, which are typically designed for prediction rather than density estimation. Thus, its practical application remains unclear. 

A closely related approach based on the loss of a predictor was proposed by \citet{shaer23a}. In particular, they introduced the \textit{eCRT}, a sequential version of the CRT. Their method relies on the fact that, under the null , for any importance statistic $T$,
\[
T = T(X,Y,Z)
\quad \text{and} \quad
\widetilde{T} = T(\widetilde{X},Y,Z)
\]
are exchangeable.
They take $T(X,Y,Z)=l(\widehat{m}(X, Z), Y)$ to be the predictive performance of a model $\widehat{m}:\mathcal{X}\times\mathcal{Z}\to \mathcal{Y}'\subseteq \mathbb{R}$, which uses inputs $(X,Z)\in\mathcal{X}\times\mathcal{Z}\subseteq \mathbb{R}^{d+1}$ to try to predict the output $Y\in\mathcal{Y}\subseteq \mathbb{R}$ under the loss function $l:\mathcal{Y}'\times \mathcal{Y}\to \mathbb{R}_+$.
They then propose the \textit{antisymmetric} e-value
\begin{align}
    1 + v\, g(T,\widetilde{T}),\label{eq:anti_def_shaer}
\end{align}
where $v$ is a scalar and $g$ is a antisymmetric, i.e., $g(b,a) = -g(a,b).$
By antisymmetry and exchangeability, it follows that under the null, $\mathbb{E}[g(T,\widetilde{T})] = 0.$
To ensure that this defines a valid e-value, one also requires that $v\,g(T,\widetilde{T}) \geq -1$, which guarantees positivity. They proposed, for instance, using the sign of  $\widetilde{T}-T$. Still, it is not clear which statistics are optimal, or how these antisymmetric e-values compare to $E^{\textnormal{X-CI}}$. 
To do so, we note that, these e-values do not directly test the initial null $\mathcal{P}^\textnormal{X-CI}_{K}$. Instead, they test 

\begin{align}\label{eq:exch_null_hyp}
\mathcal{P}^\mathrm{exch}:= \left\{P(\widetilde{T}, T): (\widetilde{T}, T)\overset{\mathrm{d}}{=}(T, \widetilde{T})\right\}.
\end{align}

This null is much larger, since any distribution in $\mathcal{P}^\textnormal{X-CI}_K$ induces an exchangeable distribution over $(T, \widetilde{T})$, but the converse does not hold.
Testing this way remains valid, but may discard substantial information. We study this further in Section \ref{sect:power}.

Finally, in practice, the assumption that $K$ is known is restrictive, and it usually needs to be estimated. Hence, we do not have exact e-values but \textit{approximate} ones, which leads to an inflation of the type-I error. Current bounds on this inflation \citep{Berrett, Grünwald02042024} are worst-case with respect to the statistic used, and are therefore too pessimistic in practice. Tighter bounds are needed to achieve meaningful control of the discoveries. In Section \ref{sect:safety}, we improve type-I error bounds when the model-X distribution is estimated, using triple-robustness results. 




\section{Analysis under the alternative}\label{sect:power}

Even though the model-X e-variable in \eqref{eq:mx_definition} is the GRO against a point alternative with density $f_{Y\mid X,Z}$, estimating this conditional distribution is challenging in practice. First, this distribution may not belong to the class of distributions over which we are searching. More generally, we do not optimize over $\mathfrak{E}$, the set of all e-variables for this null, but rather over a restricted set $\mathfrak{E}_{\textnormal{X-CI}}$. This introduces an \textit{approximation error}.

Moreover, the optimal e-variable within this restricted class is typically not available and must therefore be estimated by $\widehat{E}^{{\textnormal{X-CI}}}$, introducing an \textit{estimation error}. Neither of these errors is necessarily negligible, and both should be taken into account in the power analysis. One way to reduce these errors is to enlarge the null hypothesis or to push forward the original distribution space, allowing us to search over an easier class of e-variables, denoted by $\mathfrak{E}_{\#}$. We can then approximate the new optimal e-variable $E^{\#}$ associated with this new null hypothesis. This may reduce both the estimation and approximation errors, at the cost of introducing a \textit{null enlargement error} (see Figure \ref{fig:diagram}).

\begin{figure}
    \centering
    \hspace*{-0.2cm}
    \includegraphics[width=1.05\linewidth]{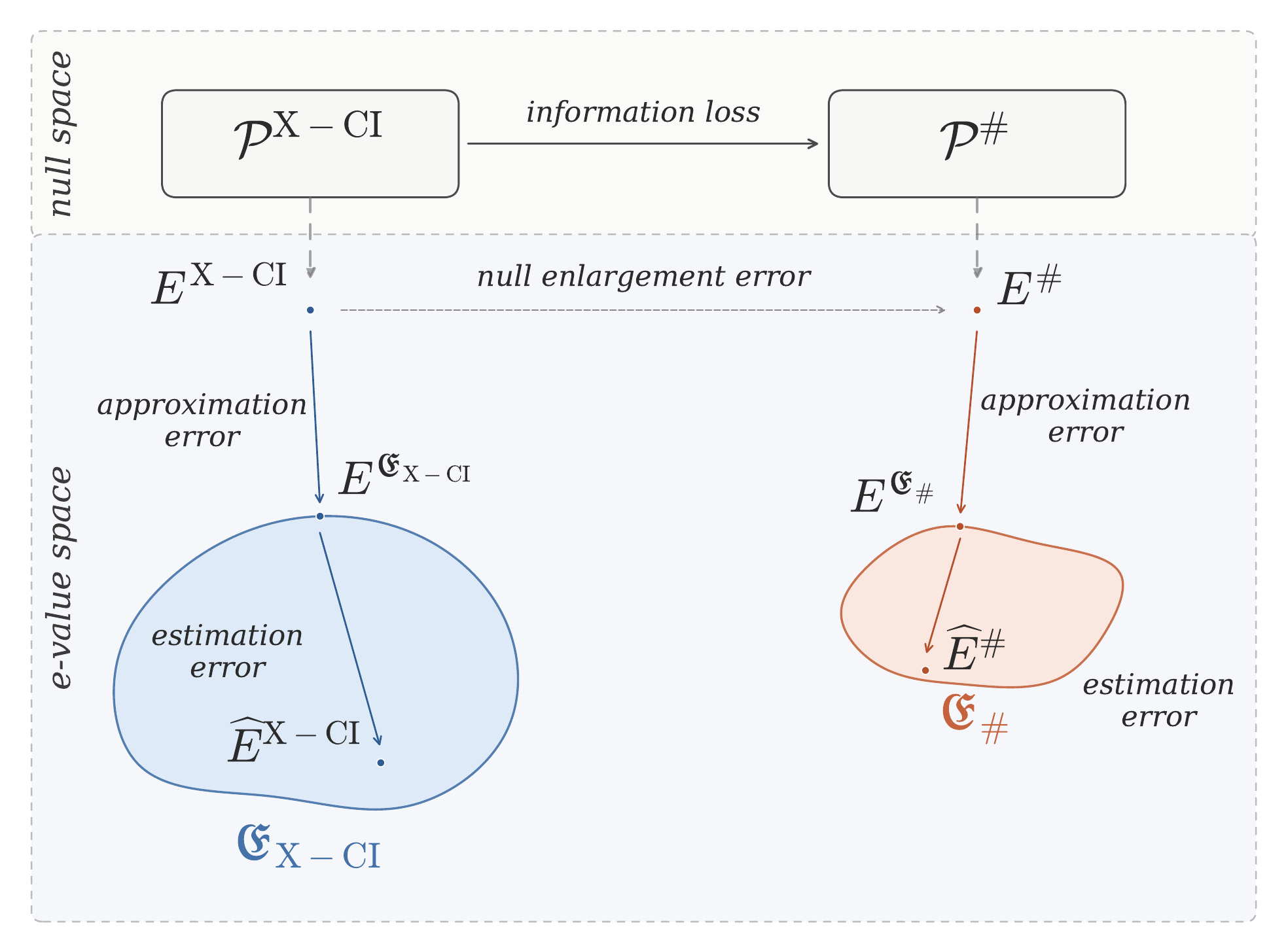}
    \caption{While distilling information induces a \textit{null enlargement error}, it can improve power by reducing the \textit{approximation} and \textit{estimation errors}. }
    \label{fig:diagram}
\end{figure}

We can decompose the e-power loss of $\widehat{E}^\#$ as
\begin{align*}
\mathbb{E}\!\left[\log E^\textnormal{X-CI}\right]
    - \mathbb{E}\!\left[\log \widehat{E}^\#\right]\qquad
    &\qquad \text{Regret of $\widehat{E}$}\\
    =\mathbb{E}\!\left[\log E^\textnormal{X-CI}\right]
    - \mathbb{E}\!\left[\log E^{\#}\right]
    &\qquad \text{Null enlargement error} \\
    {}+ \mathbb{E}\!\left[\log E^{\#}\right]
    - \mathbb{E}\!\left[\log E^{\mathfrak{E}_{\#}}\right]
    &\qquad \text{Approximation error} \\
    {}+ \mathbb{E}\!\left[\log E^{\mathfrak{E}_{\#}}\right]
    - \mathbb{E}\!\left[\log \widehat{E}^\#\right]
    &\qquad \text{Estimation error},
\end{align*}
where $E^{\mathfrak{E}_{\#}}$ denotes the optimal e-variable within the new class $\mathfrak{E}_{\#}$, which is estimated by $\widehat{E}^{\#}$.

For instance, in our setting, the joint distribution of $(X,Y,Z)$ is difficult to model. This is why we use ML models as surrogates for this distribution and then apply interpretability tools to obtain explanations from these models. Thus, we expect the push-forward not to discard too much information, and we seek to incorporate these models into the e-variables.

\subsection{Model-X vs Antisymmetric e-variables}

First, we propose an exponential class of e-variables, which provides a natural way to approximate the original GRO e-variable using ML. We then compare this class with an instance of antisymmetric e-variables, obtained by null enlargement, which relies only on the exchangeability of a push-forward through importance statistics. We prove that the latter class can outperform the GRO e-variable estimate because of the estimation and approximation errors discussed above.

To use the antisymmetric coin-betting e-value, presented below, we work with bounded losses. For the unbounded losses, see Appendix~\ref{sect:cb_unbounded}, and for the $0$-$1$ loss, where both coincide, see Appendix~\ref{sect:0-1-loss}.
The algorithms are given in Appendix~\ref{app:algo}. 
For simplicity, in the analysis we consider a fixed predictor $m$; it can directly be replaced by a sequential one.

\subsubsection{Exponential e-value}\label{sect:exp_e_val}
This proposal aligns with the \textit{model-X} density e-value \eqref{eq:mx_definition} of \citet{Grünwald02042024}  by constructing pseudo-probabilities (\cite{fast_rates}) by exponentiating the loss with a variance-like parameter $\eta$, normalized under the null:
\begin{align}
E_{\mathrm{exp}}^\eta
:=
\frac{
\exp\!\left(
-\eta\, l\!\left(m(X, Z), Y\right)
\right)
}{
\mathbb{E}_{\widetilde{X} \mid Z}\!\left[
\exp\!\left(
-\eta\, l\!\left(m(\widetilde{X}, Z), Y\right)
\right)
\right]
}.
\label{eq:exp_def}
\end{align}

\subsubsection{Coin-betting e-value}\label{sect:cb}

Coin-betting e-variables have been proven 
%
to be the only admissible e-variables
for testing the mean of a bounded random variable (\cite{CLERICO2025109550,larsson2026testinghypothesesgeneratedconstraints}). We thus study the following adaptation of coin-betting to these losses:
\begin{align}
\notag
E_{\mathrm{cb}}^\lambda
&=
1 + \lambda \left(
- l\!\left(m(X, Z), Y\right)
+ 
l\!\left(m(\widetilde{X}, Z), Y\right)
\right).
\label{eq:cb_def}
\end{align}

This is an instantiation of the \textit{antisymmetric} e-value \eqref{eq:anti_def_shaer} of \citet{shaer23a}, where there is no need to apply the sign function, since the loss is already bounded.

\begin{remark} $\eta$ and $\lambda$ may depend on $Y$ and $Z$ while still preserving the guarantees 
(see Lemma~\ref{lemm:y_z_measu}).
\end{remark}

\begin{remark}
\cite{shaer23a} observed as a practical consideration that derandomizing the dummy variable $\widetilde{X}$ when constructing antisymmetric e-variables can improve power. This is because derandomization effectively leads to a different null hypothesis, as discussed later in Section \ref{sect:null_intermediate} and Appendix \ref{appendix:hypotheses}. Nevertheless, the comparison results that follow remain unchanged when this derandomization is incorporated into the analysis.
\end{remark}

\subsubsection{Comparison of both e-values}
We first note that none of the available optimality results applies exactly in our setting. Indeed, for the exponential e-value, since the alternative density $f_{Y\mid X,Z}$ is unknown and does not necessarily admit the exponentiated-loss form considered above, it introduces a potentially large approximation and an estimation error. Likewise, the admissibility results for coin-betting e-values do not directly apply: although the loss is bounded, our setting involves the model-X null hypothesis, rather than the larger exchangeability or zero mean of the push-forward loss nulls, which introduces a null enlargement error. Consequently, there is no universally preferable option, and we show below that neither method uniformly dominates the other.


Now, let us define the \emph{predictiveness signal} of $X$ 
\[
\Delta = \e{l(m(\widetilde{X}, Z), Y)}-\e{l(m(X, Z), Y)}.
\]
If $m$ is given by a good learning algorithm, \(\Delta\) can be expected nonnegative. Indeed, under the null, we have \(\Delta=0\) by symmetry, whereas under the alternative we expect the feature \(X\), which is no longer conditionally independent of \(Y\), to provide useful predictive information. Consequently, the predictor using \(X\) should achieve a lower loss than the one using the dummy \(\widetilde X\).

This quantity can be interpreted as an effect size: the larger \(\Delta\) is, the more predictive the feature is and the easier the testing problem becomes. 
In the remainder of this section, we restrict attention to the small-signal regime. Indeed, when the signal is large, we expect the e-values to do large bets. However, the coin-betting cannot become arbitrarily large since the admissible values of \(\lambda\) are constrained to guarantee the nonnegativity. By contrast, under standard tail assumptions, the exponential e-value may grow exponentially fast in the effect size.
For instance, assume that the loss based on \(\widetilde X\) is \(\sigma\)-sub-Gaussian. 
Developing, it satisfies, 
\begin{align*} \mathbb{E}&[\mathrm{log}(E^\eta_\mathrm{exp})]\geq\eta \Delta - \frac{\sigma^2 \eta^2}{2}, \end{align*}


for any $\eta$. Optimizing over \(\eta\) yields
\[
\mathbb{E}\!\left[\log(E^{\eta^\star}_{\mathrm{exp}})\right]\geq
\frac{\Delta^2}{2\sigma^2}.
\]

Hence, the e-power of the exponential e-value grows quadratically with the signal strength \(\Delta\), and therefore diverges as the signal becomes large.

\begin{theorem}[Small-signal regime]\label{thm:small_signal}
Assume that the signal strength $\Delta$ is sufficiently small. Define
\[
\sigma_0^2 := \mathbb{V}(l(m(\widetilde {X},Z),Y)),
\qquad
\sigma_1^2 := \mathbb{V}(l(m(X,Z),Y)).
\]

Then, up to second order in $\Delta$, the optimal parameters for the exponential and coin-betting e-values satisfy

\begin{align*}
    \eta^\star
=
\frac{\Delta}{\sigma_0^2}
+
O(\Delta^2),\qquad
\lambda^\star
=
\frac{\Delta}{\sigma_1^2+\Delta^2}
+
O(\Delta^2).
\end{align*}

Moreover, the corresponding optimal  e-powers satisfy
\begin{align*}
\mathbb{E}\!\left[\log E_{\exp}^{\eta^\star}\right]
&=
\frac{\Delta^2}{2\sigma_0^2}
+
O(\Delta^3), \\
    \mathbb{E}\!\left[\log E_{\mathrm{cb}}^{\lambda^\star}\right]
&=
\frac{1}{2}
\frac{\Delta^2}{\sigma_1^2+\Delta^2}
+
O(\Delta^3).
\end{align*}
\end{theorem}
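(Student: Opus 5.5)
Both e-powers will be expanded as functions of the betting parameter around zero. The optimizer will then be read off from a quadratic approximation. Write $L:=l(m(X,Z),Y)$ and $\widetilde L:=l(m(\widetilde X,Z),Y)$, so that $\Delta=\mathbb{E}[\widetilde L]-\mathbb{E}[L]$. The small-signal regime is formalised by treating $\Delta$ as the small parameter and positing that the optimal $\eta,\lambda$ are $O(\Delta)$. This is justified a posteriori by the first-order conditions: at $\Delta=0$ both e-powers are maximised at the parameter $0$, since they are concave with zero derivative there. The losses are bounded, so all moment expansions below have uniformly controlled remainders. Under the $\sigma$-sub-Gaussian caveat, the same argument goes through for the exponential case.

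\emph{Exponential e-value.} I would write $\log E^\eta_{\exp}=-\eta L-\log\mathbb{E}_{\widetilde X\mid Z}[e^{-\eta\widetilde L}]$ and take expectations. The conditional cumulant generating function satisfies $\log\mathbb{E}_{\widetilde X\mid Z}[e^{-\eta \widetilde L}]=-\eta\,\mathbb{E}[\widetilde L\mid Y,Z]+\tfrac{\eta^2}{2}\mathbb{V}(\widetilde L\mid Y,Z)+O(\eta^3)$. This gives $\mathbb{E}[\log E^\eta_{\exp}]=\eta\Delta-\tfrac{\eta^2}{2}\bar\sigma_0^2+O(\eta^3)$, where $\bar\sigma_0^2$ is the averaged conditional variance.

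Here is the main subtlety: the statement uses the unconditional $\sigma_0^2=\mathbb{V}(\widetilde L)$. By the law of total variance this differs from $\bar\sigma_0^2$ by $\mathbb{V}(\mathbb{E}[\widetilde L\mid Y,Z])$. I would handle this by using the paper's remark that $\eta$ is treated as a constant together with the convention of the earlier sub-Gaussian display. Alternatively, I would interpret $\sigma_0^2$ as the relevant (conditional) variance term. I expect to have to state explicitly which variance is meant, and I flag this as the step most likely to need adjustment.

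Maximising $\eta\Delta-\eta^2\sigma_0^2/2+O(\eta^3)$ gives $\eta^\star=\Delta/\sigma_0^2+O(\Delta^2)$. Plugging this back in, the cubic remainder is $O(\Delta^3)$, so the e-power is $\Delta^2/(2\sigma_0^2)+O(\Delta^3)$.

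\emph{Coin-betting e-value.} Set $D:=\widetilde L-L$, so that $\mathbb{E}[D]=\Delta$. Expanding $\log(1+\lambda D)=\lambda D-\tfrac{\lambda^2}{2}D^2+O(\lambda^3)$ is valid uniformly because $D$ is bounded and $\lambda$ is small. This gives $\mathbb{E}[\log E^\lambda_{\rm cb}]=\lambda\Delta-\tfrac{\lambda^2}{2}\mathbb{E}[D^2]+O(\lambda^3)$, with $\mathbb{E}[D^2]=\mathbb{V}(D)+\Delta^2$. The first-order condition yields $\lambda^\star=\Delta/(\mathbb{V}(D)+\Delta^2)+O(\Delta^2)$, and the value is $\tfrac12\Delta^2/(\mathbb{V}(D)+\Delta^2)+O(\Delta^3)$.

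It remains to identify $\mathbb{V}(D)$ with the statement's $\sigma_1^2$. I would match the statement by interpreting $\sigma_1^2$ as the variance of the loss difference entering the bet, $\mathbb{V}(\widetilde L - L)$. If the statement literally means $\mathbb{V}(L)$, the conversion would be the point to revisit.

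\emph{Remainders.} The final step is bookkeeping. In both cases I would check that the $O(\lambda^3)$ and $O(\eta^3)$ remainders, evaluated at a parameter of order $\Delta$, are $O(\Delta^3)$. I would also check that perturbing the optimizer by $O(\Delta^2)$ changes the value only at order $O(\Delta^3)$; this follows from the envelope argument, since the first derivative vanishes at the optimum. The $\Delta^2$ retained in the coin-betting denominator is formally a higher-order term. I would keep it, as the statement does, because it comes from the exact second moment $\mathbb{E}[D^2]$.
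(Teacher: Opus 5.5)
Your route is the paper's: expand the conditional cumulant generating function of $\widetilde L$ given $(Y,Z)$, and $\log(1+x)$, to second order, then maximise the resulting quadratic. Your remainder bookkeeping is more careful than the paper's, which simply drops cubic terms. One caveat: concavity alone does not give optimisers of order $O(\Delta)$. You also need the curvatures $\bar\sigma_0^2$ and $\mathbb{E}[D^2]$ to stay bounded away from zero as $\Delta\to 0$.

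You are right that the literally stated variances are not what comes out. The paper's own proof redefines $\sigma_0^2:=\mathbb{E}[\mathbb{V}(\widetilde L\mid Y,Z)]$, which is exactly your $\bar\sigma_0^2$, so take your second option. Your first option does not help: treating $\eta$ as a constant is precisely what produces the averaged conditional variance, not $\mathbb{V}(\widetilde L)$.

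The one real divergence is in the coin-betting part. The paper's proof expands the Rao--Blackwellised e-value, with $D=\mathbb{E}_{\widetilde X\mid Z}[\widetilde L]-L$. Its closing identity $\mathbb{E}[D^2]=\sigma_1^2+\Delta^2$ therefore means $\sigma_1^2=\mathbb{V}(\mathbb{E}_{\widetilde X\mid Z}[\widetilde L]-L)$. Your single-draw $D=\widetilde L-L$ is a legitimate reading of the main-text definition, and your expansion is correct for it. However, it yields a constant that is degenerate for the paper's purposes. Write $\mu:=\mathbb{E}_{\widetilde X\mid Z}[\widetilde L]$. Since $\widetilde X$ is independent of $(X,Y)$ given $Z$, we have $\mathbb{E}[\widetilde L-\mu\mid X,Y,Z]=0$, so the cross term vanishes and
\[
\mathbb{E}[(\widetilde L-L)^2]=\bar\sigma_0^2+\mathbb{E}[(\mu-L)^2]\ \ge\ \bar\sigma_0^2 .
\]
With your identification, $\sigma_1^2+\Delta^2\ge\bar\sigma_0^2$ always holds. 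So the single-draw coin-betting e-value can never beat the exponential one at second order. The comparison drawn right after the theorem (settings with $\sigma_1^2+\Delta^2<\sigma_0^2$, Figure~\ref{fig:exp_vs_cb}) could then never occur.

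To prove the statement the paper intends, run the identical computation with the derandomised difference. The extra $\bar\sigma_0^2$ is exactly the cost of not integrating out $\widetilde X$.
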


We typically expect \(\sigma_0^2\) to be larger than \(\sigma_1^2\), since the loss obtained using the informative \(X\) should typically be smaller and less variable than the one using the noisy \(\widetilde X\). Yet, there exist settings in which
$
\sigma_1^2+\Delta^2 < \sigma_0^2,
$ so that the coin-betting is more e-powerful, as illustrated in Figure~\ref{fig:exp_vs_cb}. Therefore, neither e-variable uniformly dominates the other.

Thus, even though we distill information through the push-forward introducing a null enlargement error by testing $\mathcal{P}^\mathrm{exch}$ rather than the original null $\mathcal{P}^\textnormal{X-CI}_{K}$, this can reduce the approximation error incurred by using the exponential pseudo-density and lead to higher e-power. On the other hand, if we try to estimate $f_{Y|X,Z}$ with a more complex class of learners and thus incur less approximation error, the estimation error would become the most significant source of e-power loss.

\begin{figure}[t]
    \centering
     \hspace*{-0.3cm}
    \includegraphics[width=1.05\linewidth]{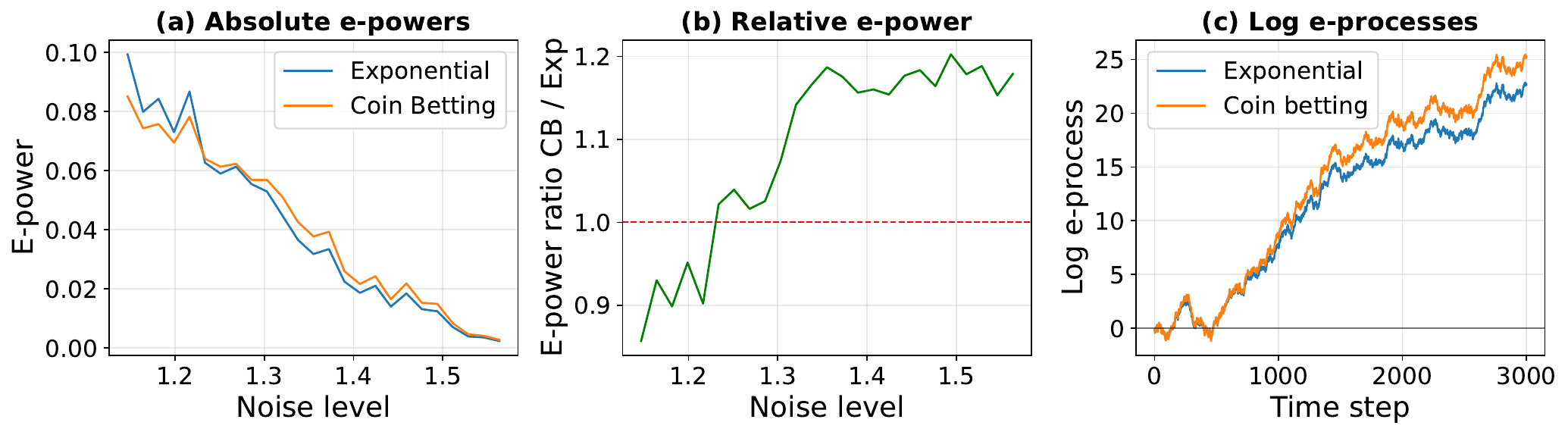}
    \caption{\textbf{Coin betting can dominate the exponential:} the left panel shows the e-powers at the optimal choices of $\eta$ and $\lambda$. As the noise level increases, the effect size shrinks and the testing problem becomes harder, and $E_{\mathrm{cb}}^{\lambda^\star}$ can outperform $E_{\mathrm{exp}}^{\eta^\star}$. The middle panel shows the ratio of their e-powers, and the right panel shows the growth of the evidence at a fixed noise level. Details are in Appendix~\ref{app:simulation}.
    }
    \label{fig:exp_vs_cb}
\end{figure}

\subsection{Optimal antisymmetric e-variable}\label{sect:kde}

Theorem~\ref{thm:small_signal} shows that, if the density does not take the form of an exponentiated loss, it is possible to construct more powerful antisymmetric e-variables \eqref{eq:anti_def_shaer}. It is then natural to ask which function $g$ is optimal.

To answer this question, we adopt a different perspective. Rather than directly testing the original model-X CI null $\mathcal{P}^\textnormal{X-CI}_{K}$ \eqref{eq:mx-null-hyp}, we first compute the importance statistics $T$ and $\widetilde{T}$ and only then, test their exchangeability, as in \cite{shaer23a}. This viewpoint changes the problem: directly characterizing $f_{Y\mid X,Z}$ is generally intractable, whereas the ML model allows us to distill the relevant conditional information into the importance statistics. Thus, we can focus on the larger exchangeability null $\mathcal{P}^\mathrm{exch}$ \eqref{eq:exch_null_hyp}.

First, note that any e-value depending only on $(T,\widetilde{T})$ is dominated by an antisymmetric e-value.

\begin{theorem}
For any e-value \(E=h(T,\widetilde{T})\) for the exchangeability null hypothesis $\mathcal{P}^\mathrm{exch}$, there exists another e-value $E'$ of the form $E'=1+g(T,\widetilde{T})$ with \(g\) antisymmetric such that $E'\geq E$
\end{theorem}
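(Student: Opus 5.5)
The plan is to symmetrize $h$ and then adjust it by a nonnegative symmetric term. Write $h^\sigma(a,b):=h(b,a)$.

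\textbf{Step 1: the pairwise constraint.} Fix any pair $(a,b)$ and take the exchangeable distribution putting mass $1/2$ on $(a,b)$ and mass $1/2$ on $(b,a)$. This distribution lies in $\mathcal{P}^\mathrm{exch}$, so the e-value property of $E=h(T,\widetilde{T})$ gives
\[
h(a,b)+h(b,a)\le 2 \quad\text{for every pair } (a,b).
\]
If $a=b$, the point mass at $(a,a)$ is also exchangeable and gives $h(a,a)\le 1$. So the two-point distributions already force this pairwise inequality, and that is the key fact.

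\textbf{Step 2: the construction.} Define
\[
g(a,b):=\frac{h(a,b)-h(b,a)}{2},\qquad E':=1+g(T,\widetilde{T}).
\]
Then $g$ is antisymmetric by construction.

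\textbf{Step 3: domination.} We have
\[
E'-E = 1-\frac{h(a,b)+h(b,a)}{2}\ge 0
\]
by Step 1, so $E'\ge E$ pointwise.

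\textbf{Step 4: validity.} Nonnegativity follows because $E'\ge E\ge 0$. Under any $P\in\mathcal{P}^\mathrm{exch}$, $g(T,\widetilde{T})$ and $g(\widetilde{T},T)=-g(T,\widetilde{T})$ have the same law. Hence $\mathbb{E}_P[g]=0$ and $\mathbb{E}_P[E']=1$, so $E'$ is an e-value with the required form.

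\textbf{Main obstacle.} The technical point is integrability and measurability. $E'\ge 0$ and $E'\le 1+h/2$ only bound $E'$ from above by something whose expectation exceeds $1$ if $h$ is heavy. To handle this, $g$ should be bounded below by $-1$ and above by $1$. Indeed $g(a,b)\ge -1$ from nonnegativity of $E'$, and by antisymmetry $g(a,b)=-g(b,a)\le 1$. So $g$ is bounded, its expectation exists, and the symmetry argument for $\mathbb{E}_P[g]=0$ is rigorous. Measurability of $g$ is immediate from that of $h$.

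If one instead insists that the antisymmetric form be $1+v\,g$ as in \eqref{eq:anti_def_shaer}, simply take $v=1$.
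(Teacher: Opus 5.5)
Your proof is correct and follows essentially the same argument as the paper. Both split $h$ into symmetric and antisymmetric parts, use the exchangeable two-point law $\frac12(\delta_{(a,b)}+\delta_{(b,a)})$ to get $h(a,b)+h(b,a)\le 2$, and take $g=\frac12\bigl(h(a,b)-h(b,a)\bigr)$. You also carry out the validity check that the paper only asserts: nonnegativity from $E'\ge E\ge 0$, and boundedness $g\in[-1,1]$, so that $\mathbb{E}_P[g]=0$ follows rigorously from exchangeability.
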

Moreover, the GRO e-variable for the exchangeability null hypothesis admits a closed-form expression.

\begin{theorem}\label{th:2_exch}
GRO e-value for the exchangeability null \eqref{eq:exch_null_hyp} against an alternative $Q$ with density \(f_{T, \widetilde{T}}\) is
\begin{align*}
    E^{\mathrm{exch}} :&= \frac{
f_{T, \widetilde{T}}(T, \widetilde{T})
}{
\frac12\bigl(f_{T, \widetilde{T}}(T, \widetilde{T})+f_{T, \widetilde{T}}(\widetilde{T},T)\bigr)}\\&=1+\frac{f_{T, \widetilde{T}}(T, \widetilde{T})-f_{T, \widetilde{T}}(\widetilde{T}, T)}{f_{T, \widetilde{T}}(T, \widetilde{T})+f_{T, \widetilde{T}}(\widetilde{T}, T)}.
\end{align*}

\end{theorem}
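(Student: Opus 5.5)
The plan is to exhibit $E^{\mathrm{exch}}$ as a likelihood ratio against a specific null element and then invoke the standard GRO characterization: if $E^\ast = q/p^\ast$ with $p^\ast\in\mathcal{P}^\mathrm{exch}$ and $\mathbb{E}_P[E^\ast]\le 1$ for all $P\in\mathcal{P}^\mathrm{exch}$, then $E^\ast$ is GRO. Throughout, write $f:=f_{T,\widetilde T}$ and let $\bar f(t,\tilde t):=\tfrac12\bigl(f(t,\tilde t)+f(\tilde t,t)\bigr)$ be its symmetrization. The natural candidate for the reverse information projection of $Q$ onto the exchangeability null is the distribution with density $\bar f$, and $E^{\mathrm{exch}}=f/\bar f$. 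The second displayed form follows from the first by the routine algebra $f/\bar f - 1 = (f(t,\tilde t)-f(\tilde t,t))/(f(t,\tilde t)+f(\tilde t,t))$, which is antisymmetric and bounded in $[-1,1]$. It is therefore also of the form of the preceding theorem.

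\textbf{Step 1 (membership).} The density $\bar f$ is a probability density and is symmetric, so $\bar P\in\mathcal{P}^\mathrm{exch}$. \textbf{Step 2 (validity).} For any exchangeable $P$, set $g:=E^{\mathrm{exch}}-1$. Since $g$ is antisymmetric, exchangeability gives $\mathbb{E}_P[g(T,\widetilde T)]=\mathbb{E}_P[g(\widetilde T,T)]=-\mathbb{E}_P[g(T,\widetilde T)]$. Hence $\mathbb{E}_P[g]=0$, and integrability is automatic because $|g|\le1$. So $\mathbb{E}_P[E^{\mathrm{exch}}]=1$, and nonnegativity is clear. On the set where $f(t,\tilde t)+f(\tilde t,t)=0$ I will set the ratio to $1$; this set is $Q$-null.

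\textbf{Step 3 (optimality).} Let $E$ be any e-value for $\mathcal{P}^\mathrm{exch}$. Then
\[
\mathbb{E}_Q[\log E]-\mathbb{E}_Q[\log E^{\mathrm{exch}}]=\mathbb{E}_Q\!\left[\log\frac{E\,\bar f}{f}\right]\le \log \mathbb{E}_Q\!\left[\frac{E\,\bar f}{f}\right]\le \log\mathbb{E}_{\bar P}[E]\le 0,
\]
where the first inequality is Jensen and the last uses Step 1. The middle step is where care is needed. Writing $\mathbb{E}_Q[E\bar f/f]=\int_{\{f>0\}}E\,\bar f$, this is at most $\mathbb{E}_{\bar P}[E]$ since $E\ge0$. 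Hence $E^{\mathrm{exch}}$ maximizes $\mathbb{E}_Q[\log E]$. If uniqueness is wanted, strict concavity of $\log$ gives equality $Q$-a.s.

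\textbf{Main obstacle.} The main obstacle is only measure-theoretic: handling points where $f$ vanishes but $f(\tilde t,t)>0$, which gives $E^{\mathrm{exch}}=0$ there, and making sure $\mathbb{E}_Q[\log E^{\mathrm{exch}}]$ is well defined. For the latter, $\mathbb{E}_Q[\log(f/\bar f)]=\mathrm{KL}(Q\|\bar P)\le\log 2$, since $f/\bar f\le 2$, so it is finite and nonnegative. With that bound the argument goes through.
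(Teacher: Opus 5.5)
Your proof is correct and follows essentially the same route as the paper: a change of measure to the symmetrized density $\bar f$, which lies in the exchangeability null, gives $\mathbb{E}_Q[E/E^{\mathrm{exch}}]\le \mathbb{E}_{\bar P}[E]\le 1$ for every e-value $E$, and log-optimality then follows by Jensen. Your version is slightly tidier than the paper's in three respects: you check explicitly that $E^{\mathrm{exch}}$ is itself an e-value, you verify that its e-power is finite, and you put the restriction to $\{f>0\}$ in the integration domain rather than in the reference measure (the paper's $\tilde P$ with density $\bar f\,\mathbf{1}_{\{f>0\}}$ is not exactly symmetric).
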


 In particular, the GRO is of the form $1 + g$, where $g$ is antisymmetric. The proof follows a change-of-measure similar to the symmetry result in \cite{numeraire}.



%

Although it may appear that we have replaced the problem of learning one density with that of learning another, note that the latter is only a two-dimensional marginal while the former is conditional on high-dimensional $(X, Z)$. Accordingly, it is much easier to estimate, for example using a kernel density estimator (KDE), for which standard implementations are readily available (e.g., \cite{scikit-learn}). Hence, this approach consists of sequentially estimating the joint density of the importance statistics; see Algorithm~\ref{alg:kde_e_value}.

\subsection{Intermediate null hypotheses}\label{sect:null_intermediate}

Formally studying the null hypothesis that we are actually testing not only allows us to understand when tests are safe, but also to study which e-variable is the GRO. Thus, it is of interest to analyze the null hypotheses that lie in between the model-X null \eqref{eq:mx-null-hyp}, which is our ultimate goal, and the exchangeability null \eqref{eq:exch_null_hyp}, which appears to be the feasible one. In this section, we discuss two: push-forward model-X conditional independence and the broader exchangeability that arises from derandomizing the dummy variable. For more details on other intermediate nulls and their corresponding GROs, see Appendix~\ref{appendix:hypotheses} and Table~\ref{tab:null-distillation}.


The first step is to notice that studying the raw distribution of $(\widetilde{X},X, Y, Z)$ is hard, and that we need an ML model to act as a surrogate. This is done by computing an importance statistic $T:\mathcal{X}\times \mathcal{Y}\times \mathcal{Z}\to \mathbb{R}_+$, for example the loss of a model, as done previously. Then, instead of directly requiring exchangeability of the original input and the dummy $\widetilde{X}$ conditionally on $(Y, Z)$, plus the model-X kernel, we apply the \textit{push-forward}, which can reduce the estimation and approximation error at the price of a null enlargement error:
\begin{align*}
    \mathcal{P}^\textnormal{PFX-CI}_{K^{\#}}&:=\left\{P(\widetilde{T}, T, Y, Z):(\widetilde{T}, T, Y, Z)\overset{\mathrm{d}}{=}(T, \widetilde{T}, Y, Z),\right. \\ &\left.\qquad P(\widetilde{T}\mid T, Y, Z)= K^{\#}(\cdot\mid Y, Z) \right\},
\end{align*}
where $K^{\#}$ stands for the push-forward of the initial kernel $K$ through the statistic $T(\cdot)$. It may depend on $(Y, Z)$, but $\widetilde{T}$ and $T$ are conditionally independent given $(Y, Z)$. The GRO for this null is (see Prop. \ref{prop:gro_property_proof})
\begin{align}
    E^\textnormal{PFX-CI}:=\frac{f_{T\mid Y, Z}(T\mid Y, Z)}{f_{\widetilde{T}\mid Y, Z}(T\mid Y, Z)}.
\end{align}
It is similar to the model-X GRO, replacing $(X, \widetilde{X})$ by $(T, \widetilde{T})$. It has more e-power than any e-variable using this information distillation through ML. However, like the model-X GRO, it potentially suffers from high estimation and approximation error, since it still relies on a conditional density that is hard to estimate.

Alternatively, \cite{shaer2023learning} noted that in practice, \textit{derandomizing} over the dummy variables had a great impact on the power. Indeed, the goal would be to Rao-Blackwellize the e-variable \citep{deroos2025raoblackwellizedevariables} by taking the expectation over the model-X kernel. However, since this expectation is usually unknown, they propose to take the average across $B$ resamples $\{\widetilde{T}_b\}_b^B$ with $\widetilde{T}_b:=T(\widetilde{X}_b, Y, Z)$:
\begin{align*}
    \frac{1}{B}\sum_{b=1}^B\left(1+v\,g(T, \widetilde{T}_b)\right).
\end{align*}

When doing so, the exchangeability null hypothesis \eqref{eq:exch_null_hyp} is replaced by a $B$-exchangeability null, i.e., we are no longer testing whether $T$ is exchangeable with a single resample $\widetilde{T}$, but rather with $\{\widetilde{T}_b\}_{b=1}^B$. For this $B$-exchangeable null, the GRO is 
\citep{koning2026measuringevidenceexchangeabilitygroup}
\begin{align*}
   E^{\mathrm{exch}}_B:= \frac{f_{T, \widetilde{T}^B}(T, \widetilde{T}^B)}{\frac{1}{(B+1)!}\sum_{\sigma\in \mathcal{S}_{B+1}}f_{{T, \widetilde{T}^B}}(\sigma( T, \widetilde{T}^B))},
\end{align*}
where $\widetilde{T}^B:=(\widetilde{T}_1, \ldots, \widetilde{T}_B)$, $\mathcal{S}_{B+1}$ is the set of permutations of $[B+1]$, and $\sigma(T, \widetilde{T}^B)$ is the vector $(T, \widetilde{T}^B)$ permuted by $\sigma$. Even if it has more e-power than any of the derandomized e-variables, it involves a $(B+1)$-dimensional density, which must be estimated and therefore suffers from a much higher estimation error.



\section{Analysis under the null}\label{sect:safety}

Unfortunately, the model-X assumption is rarely exactly satisfied in real-world applications, and the conditional distribution of $X$ given $Z$ needs to be estimated. 
This affects both model-X and antisymmetric e-variables. 

%
Then, the impact of this kernel estimation on the statistical guarantees still needs to be clarified. Existing bounds in the literature are often pessimistic and, in practice, appear to substantially overestimate the actual loss of type-I error control. For instance, for the CRT, \cite{Berrett} considers worst-case statistics and derives an upper bound on the type-I error inflation given by the total variation distance between the estimated and true conditional distributions. \cite{Grünwald02042024} extends this result to the sequential setting. While this bound provides a worst-case guarantee, it does not necessarily reflect the smaller loss of type-I error control observed in practice.

In this section, we account for the estimation of the conditional distribution and derive less conservative bounds on the type-I error inflation. The resulting processes from the estimated kernel are approximate e-processes, leading to approximate control.
See Appendix~\ref{sect:approx_e_values} for a formal definition of approximate e-processes and their general impact on the type-I error inflation, which we use in Theorem~\ref{thm:antisymmetric_type_I} for the approximate coin-betting e-process.
In addition, for the model-loss statistic, the error inflation can also benefit from the predictive model itself. Indeed, under conditional independence, the Bayes predictor does not depend on $X$ once $Z$ is given. Thus, as the predictive model approaches this limiting predictor, its dependence on $X$ vanishes, making the e-process progressively less sensitive to errors in the estimated conditional distribution. This yields \textit{double robustness} rates and, then, a faster vanishing of the error inflation.

We denote by $\widetilde{X}'_n$ the random variable sampled from $K_n(\cdot\mid Z_n)$, where $K_n$ is the conditional distribution from which we actually sample, representing our estimate, at time $n$, of the conditional distribution of $X_n$ given $Z_n$. We recall that $K(\cdot\mid Z_n)$ denotes the true conditional distribution of $X_n$ given $Z_n$.

\subsection{Double Robustness}

\citet{Berrett, Grünwald02042024} provide bounds that are necessarily conservative because they are obtained uniformly over the class of all statistics and do not exploit the particular structure of the one used.
However, in our setting, the statistics are
\begin{align}
    l(\widehat m_n(\widetilde X_n',Z_n),Y_n)
\quad\text{and}\quad
l(\widehat m_n(X_n,Z_n),Y_n).\label{eq:loss_dist_estim}
\end{align}

Under the conditional independence null, since $X$ is not predictive of $Y$ given $Z$, the population risk minimizer $m_0$ does not depend on $X$. 
%
Hence, the discrepancy between the distributions of \eqref{eq:loss_dist_estim} can shrink for two distinct reasons. First, the estimated conditional distribution $K_n(\cdot\mid Z)$ converges to the true $K(\cdot\mid Z).$ Second, the fitted model $\widehat m_n$ converges to the $X$-independent Bayes predictor $m_0(Z)$. Thus, even if the estimated conditional distribution is not exactly correct, the statistic becomes increasingly insensitive to discrepancies in the $X$-distribution as the fitted model approaches the null regression function.

This gives a double robustness similar to the one of Conditional Feature Importance \citep{reyerolobo2025conditionalfeatureimportancerevisited}, which is not captured by worst-case bounds based only on the distance between $K_n$ and $K$.



Let's define the excess risk at time $t$ on $(x, z, y)$
$$
r_n(x,z, y)
:=
 l(\widehat m_n(x,z), y)-l(m_0(z), y).
$$


We consider three different assumptions on the convergence of $r_n$, ranging from more to less restrictive. The first one, motivated by linear models and by \citet{lobo2026semiknockoffs}, implies continuous convergence. The second implies uniform convergence, without specifying any further regularity, and the third consists of the standard $L_2$-convergence. The more regular the assumption on this convergence, the less restrictive the resulting distributional distance. Note that the first two are conditional on $(Y, Z)$, while the last one, which is the most standard, is $L_2$-convergence specifying an explicit convergence rate.

We denote by $\mathcal W_1$ the $1$-Wasserstein distance, by $\mathrm d_{\mathrm{TV}}$ the total variation distance, and by $\mathrm d_{\chi^2}$ the square root of the chi-squared divergence.

\begin{proposition}[Double robustness]\label{prop:double_robust}
Let $K_n(\cdot\mid Z)$ be an $\mathcal F_{n-1}$-measurable estimate of $K(\cdot\mid Z)$, and suppose that, conditionally on $(\mathcal F_{n-1},Z)$, the random variables $X$ and $\widetilde X'_n$ have distributions $K(\cdot\mid Z)$ and $K_n(\cdot\mid Z)$, respectively.
Then the following hold.

\begin{enumerate}
\item[\rm (i)] \textbf{Lipschitz convergence.} Suppose that, uniformly in $z, y$, $|r_n(x,z,y)-r_n(x',z,y)| \leq b_n(z,y)\,|x-x'|$ for some $b_n$. Define $a_n(Z) := \mathcal W_1(K_n(\cdot\mid Z), K(\cdot\mid Z))$. Then, for every $z, y$,
    $\left|\mathbb E_{K_n}[r_n(\widetilde X_n,z,y)] - \mathbb E_{K}[r_n(X,z,y)]\right| \leq a_n b_n.$

\item[\rm (ii)] \textbf{Uniform convergence.} Suppose that $\sup_{x,z,y}|r_n(x,z,y)| \leq b_n(z,y)$ for some $b_n$. Define $a_n(Z) := \mathrm d_{\mathrm{TV}}(K_n(\cdot\mid Z), K(\cdot\mid Z))$. Then, for every $z, y$,
$
    \left|\mathbb E_{K_n}[r_n(\widetilde X_n,z,y)] - \mathbb E_{K}[r_n(X,z,y)]\right| \leq 2a_nb_n.
$

\item[\rm (iii)] \textbf{$L_2$ convergence.} Suppose that $\mathbb E_{K}[r_n(X,Z,Y)^2 \mid \mathcal F_{n-1}] \leq b_n^2$ for some $b_n$. Define $a_n := \sqrt{\mathbb{E}\left[\mathrm d_{\chi^2}(K_n(\cdot\mid Z) \Vert K(\cdot\mid Z))^2\right]}$. Then
$
    \left|\mathbb E[r_n(\widetilde X_n',Z,Y) - r_n(X,Z,Y)]\right| \leq a_nb_n.
$
\end{enumerate}
\end{proposition}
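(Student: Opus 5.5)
The three parts share one reduction followed by a dual representation of the relevant distance. The key observation is that for fixed $(z,y)$ the term $l(m_0(z),y)$ does not depend on $x$. The difference $\mathbb E_{K_n}[r_n(\widetilde X_n',z,y)]-\mathbb E_K[r_n(X,z,y)]$ therefore equals $\int r_n(x,z,y)\,(K_n-K)(dx\mid z)$, the integral of the $x$-function $x\mapsto r_n(x,z,y)$ against the signed measure $K_n(\cdot\mid z)-K(\cdot\mid z)$. Conditioning on $(\mathcal F_{n-1},Z=z,Y=y)$, the function $r_n$ is deterministic, because $\widehat m_n$ is $\mathcal F_{n-1}$-measurable. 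Under the null, $X$ given $(Z,Y)$ still has law $K(\cdot\mid Z)$, and by construction $\widetilde X_n'$ has law $K_n(\cdot\mid Z)$. So in each case it only remains to bound $\int h\,d(K_n-K)$ for a suitable test function $h$.

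For (i), I will use Kantorovich--Rubinstein duality, $\mathcal W_1(\mu,\nu)=\sup_{\mathrm{Lip}(h)\le1}\int h\,d(\mu-\nu)$. The function $h=r_n(\cdot,z,y)/b_n(z,y)$ is $1$-Lipschitz, which gives the bound $a_nb_n$ at once. The case $b_n=0$ is trivial.

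For (ii), I will use $|\int h\,d(\mu-\nu)|\le \|h\|_\infty\,\|\mu-\nu\|_{1}$ together with $\|\mu-\nu\|_{1}=2\,\mathrm d_{\mathrm{TV}}$. Since $|r_n|\le b_n$, this gives $2a_nb_n$. The factor $2$ could be removed by centering $h$, but the stated bound does not need it.

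For (iii), the bound is stated in expectation over $(Z,Y)$ given $\mathcal F_{n-1}$, so I will use a change of measure followed by Cauchy--Schwarz. Write $\rho=dK_n/dK$, assuming absolute continuity; otherwise $\chi^2=\infty$ and the statement is trivial. Then
\[
\mathbb E\bigl[r_n(\widetilde X_n',Z,Y)-r_n(X,Z,Y)\bigr]=\mathbb E\bigl[r_n(X,Z,Y)(\rho(X\mid Z)-1)\bigr].
\]
This identity needs $\widetilde X_n'$ to be conditionally independent of $Y$ given $Z$, and $X\indep Y\mid Z$ under the null. Together these make the joint law of $(\widetilde X_n',Z,Y)$ equal to that of $(X,Z,Y)$ reweighted by $\rho(X\mid Z)$. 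Cauchy--Schwarz then bounds the right-hand side by $\sqrt{\mathbb E[r_n^2]}\sqrt{\mathbb E[(\rho-1)^2]}$. The first factor is at most $b_n$ by assumption. For the second, note that $\mathbb E_K[(\rho-1)^2\mid Z]=\mathrm d_{\chi^2}(K_n\Vert K)^2$ by definition, so the second factor equals $a_n$.

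I expect the main obstacle to be this reweighting step in (iii). It requires $(Y,Z)$ to be distributed identically for both terms and uses $X\indep Y\mid Z$. The measurability assumptions make it clean: everything is done conditionally on $\mathcal F_{n-1}$, and then the tower property is taken.
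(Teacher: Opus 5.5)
Your proof is correct and follows the paper's argument essentially step by step: Kantorovich--Rubinstein duality for (i), the bound $|\int h\,d(K_n-K)|\le 2\|h\|_\infty\,\mathrm d_{\mathrm{TV}}$ for (ii), and for (iii) the change of measure $dK_n/dK$ (the paper's $D_n$) followed by Cauchy--Schwarz and the tower property. One side remark is wrong, though you never use it: centering $h$ only gives $(\sup h-\inf h)\,\mathrm d_{\mathrm{TV}}$, which for $h$ ranging over $[-b_n,b_n]$ is still $2b_n\,\mathrm d_{\mathrm{TV}}$ (take $K=\delta_0$, $K_n=\delta_1$, $h(0)=-b_n$, $h(1)=b_n$), so the factor $2$ is tight and cannot be removed.
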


Consequently, in all three cases the discrepancy between the two distributions is bounded by a product $a_nb_n,$
where $a_n$ measures the error in estimating the conditional distribution and $b_n$ measures the remaining $X$-dependence of the fitted model.



\subsection{Type-I error inflation}\label{sect:trpl_rob}

For simplicity, we focus on the coin-betting e-value, i.e. on the antisymmetric function $
g(a,b)=a-b,
$
with a fixed bet fraction $\lambda$. Note, however, that under the null, this $\lambda$ also tends to zero when using standard plug-in estimates, such as the prequential plug-in estimator that we use in 
practice (also known as the growth-rate-adaptive-to-the-particular-alternative (GRAPA) procedure), or 
the Krichevsky--Trofimov-type estimator. These estimators achieve a rate of $O(n^{-1/2})$; see \cite{wang2026surenullbankruptcytestingbybetting}. Thus, in practice we 
even obtain a kind of \textit{triple robustness}. Still, to keep the treatment simple, for now
we ignore this beneficial effect on the type-I error and treat $\lambda$ as fixed.

We estimate the type-I error inflation of the practical antisymmetric e-process $E^{(n)}=\prod_{i=1}^n E_i$ by first computing the inflation of the individual e-values $E_i$ and then translating it into an inflation of the type-I error.

\begin{theorem}[Antisymmetric type-I error]
\label{thm:antisymmetric_type_I}
Assume any of the three scenarios from Proposition~\ref{prop:double_robust} hold. Let $a_n$ and $b_n$ be the corresponding distributional and model-convergence rates, and let $\delta_n$ denote the corresponding failure probability $A_n$.
Then, for $P\in\mathcal{P}_{K}^\textnormal{X-CI}$,

$$
\mathbb E_P[E_n \ind_{A_n}\mid\mathcal F_{n-1}]
\leq
1+\lambda a_nb_n.
$$

Consequently,
\begin{align*}
\mathbb P &
\left(
\sup_{1\leq n\leq N}
E^{(n)}
\geq \frac{1}{\alpha}
\right)
-\alpha
\\&\leq
\sum_{n=1}^N\delta_n
+
\alpha
\left[
\exp\left(
\lambda\sum_{n=1}^N a_nb_n
\right)-1
\right].
\end{align*}
\end{theorem}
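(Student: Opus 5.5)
The plan has two stages: a one-step bound on the conditional expectation of each e-value, and then a conversion of that bound into a maximal inequality for the product process.

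\textbf{One-step bound.} Write the practical coin-betting e-value as
\[
E_n = 1+\lambda\bigl(l(\widehat m_n(\widetilde X_n',Z_n),Y_n)-l(\widehat m_n(X_n,Z_n),Y_n)\bigr).
\]
Adding and subtracting $l(m_0(Z_n),Y_n)$ turns the increment into $r_n(\widetilde X_n',Z_n,Y_n)-r_n(X_n,Z_n,Y_n)$. The $X$-free term cancels, so
\[
E_n=1+\lambda\bigl(r_n(\widetilde X_n',Z_n,Y_n)-r_n(X_n,Z_n,Y_n)\bigr).
\]

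Next use the null. For $P\in\mathcal P^{\textnormal{X-CI}}_K$, conditionally on $(\mathcal F_{n-1},Z_n,Y_n)$ we have $X_n\sim K(\cdot\mid Z_n)$, because $X\indep Y\mid Z$ and the observations are i.i.d. Also $\widetilde X_n'\sim K_n(\cdot\mid Z_n)$, and $\widehat m_n,K_n$ are $\mathcal F_{n-1}$-measurable. So the conditional expectation of the increment is exactly the quantity controlled in Proposition~\ref{prop:double_robust}.

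The event $A_n$ is the good event on which the assumed rates $b_n$ (and, where relevant, $a_n$) hold, with $\mathbb P(A_n^c)\le\delta_n$. I will take $A_n$ to be $\mathcal F_{n-1}$-measurable, or at least $(\mathcal F_{n-1},Z_n,Y_n)$-measurable. Then
\[
\mathbb E[E_n\ind_{A_n}\mid\mathcal F_{n-1}]=\mathbb E\bigl[\ind_{A_n}\,\mathbb E[E_n\mid \mathcal F_{n-1},Z_n,Y_n]\bigr].
\]
In cases (i)--(ii) the inner expectation is at most $1+\lambda a_nb_n$ pointwise, which gives the bound. In case (iii) we only average over $(Z,Y)$ given $\mathcal F_{n-1}$; the Cauchy--Schwarz/$\chi^2$ change-of-measure step of Proposition~\ref{prop:double_robust}(iii) applies conditionally on $\mathcal F_{n-1}$ and gives the same bound.

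The main obstacle is measurability of $A_n$. It has to be determined before $(X_n,\widetilde X_n')$ are revealed, so that multiplying by $\ind_{A_n}$ does not bias the expectation. In case (iii), $a_n$ must also be read as a conditional quantity given $\mathcal F_{n-1}$; I will state $A_n$ in exactly these terms.

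\textbf{From one step to the process.} Define the stopped, truncated process
\[
M_n=\prod_{i=1}^n E_i\ind_{A_i}\Big/\prod_{i=1}^n(1+\lambda a_ib_i).
\]
By the one-step bound and the tower property, $M_n$ is a nonnegative supermartingale with $M_0=1$. Here $a_i,b_i$ are treated as deterministic, or $\mathcal F_{i-1}$-measurable, rates. Ville's inequality then gives
\[
\mathbb P(\sup_{n\le N}M_n\ge c)\le 1/c.
\]

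On the event $\bigcap_{n\le N}A_n$, we have $E^{(n)}=\prod_i E_i\ind_{A_i}$. Also $\prod_{i\le N}(1+\lambda a_ib_i)\le\exp(\lambda\sum_{i\le N} a_ib_i)=:C_N$, and this bound holds for every $n\le N$. So on that event, $\sup_{n\le N}E^{(n)}\ge 1/\alpha$ implies $\sup_{n\le N}M_n\ge 1/(\alpha C_N)$.

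A union bound then gives
\[
\mathbb P\Bigl(\sup_{n\le N}E^{(n)}\ge 1/\alpha\Bigr)\le \sum_{n\le N}\delta_n+\alpha\exp\Bigl(\lambda\sum_{n\le N}a_nb_n\Bigr).
\]
Subtracting $\alpha$ yields the claim. This is the same route as the general approximate e-process lemma in Appendix~\ref{sect:approx_e_values}, which I would cite directly if it is stated in this multiplicative-slack form.
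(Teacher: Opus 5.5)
Your proposal is correct and follows the paper's proof essentially step by step. The one-step bound uses the same cancellation of $l(m_0(Z_n),Y_n)$ and applies Proposition~\ref{prop:double_robust} conditionally on $(\mathcal F_{n-1},Z_n,Y_n)$; the passage to the process uses the same truncated, normalised supermartingale with Ville's inequality and a union bound as in Proposition~\ref{prop:approx_e_type_I}. Your insistence that $A_n$ be fixed before $(X_n,\widetilde X_n')$ are revealed, and be $\mathcal F_{n-1}$-measurable in case (iii), makes rigorous a point the paper's proof passes over; for the Ville step at level $\alpha C_N$, though, keep the rates $a_nb_n$ deterministic (or deterministically bounded) rather than merely predictable.
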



Note that we do not require the model to exactly satisfy the assumptions underlying its convergence; it suffices for these assumptions to hold with high probability $1-\delta_n$. Also, in the first two cases, $a_nb_n$ corresponds to the expectations of those given in Proposition~\ref{prop:double_robust}.
Then, when
$
\sum_{n=1}^N a_nb_n,
$ is small enough, 
we have
\begin{align*}
\exp\left(
\sum_{n=1}^N a_nb_n
\right)-1
&=
O\left(
\sum_{n=1}^N a_nb_n
\right),
\end{align*}
and hence the type-I error inflation is controlled by
\begin{align*}
P
\left(
\sup_{1\leq n\leq N}
E^{(n)}
\geq \frac{1}{\alpha}
\right)
-\alpha
&=
O\left(
\sum_{n=1}^N\delta_n
+
\alpha\sum_{n=1}^N a_nb_n
\right),
\end{align*}
the cumulative probability of the approximation events failing and the cumulative inflation of the e-process.

In short, contrary to the previous bounds, which only rely on the convergence of the conditional distribution, our approach also captures the convergence of the model.
For example, consider the Gaussian linear setting with quadratic loss. Previous total variation bounds would only capture the $n^{-1/2}$ decay resulting from the convergence of the estimated distribution (\cite{Berrett}). In contrast, using our technique to study the inflation of the e-value, we can invoke Lemma 3.10 of \cite{reyerolobo2025conditionalfeatureimportancerevisited} to obtain a decay rate of $n^{-2}$. Finally, by incorporating the convergence rate of $\widehat{\lambda}$, we could obtain a rate of $n^{-5/2}$, substantially improving the safety guarantees.

\section{Experiments}\label{sect:exps}

We verify our power results on the HIV dataset considered by \citet{shaer23a}. We study the evidence against three features that they deemed representative of different effects on drug resistance. In Figure~\ref{fig:hiv_data}, we compare the exponential e-values with the antisymmetric e-values using different $g$-functions; see Table~\ref{tab:evalues} in the appendix for an explicit expression. See also Appendix~\ref{sect:hiv_data} for further details on the implementation and the dataset. We observe that none of the e-processes rejects for the null mutation, which shows robustness to violations of the model-X assumption. For the small effect, the exponential e-process is not able to reject, and even for the large effect it has the slowest growth rate, which shows that the approximation error is not negligible.

We have also reproduced the simulated data experiment from \citet{shaer23a} in Appendix~\ref{subsec:sim_setting}, and we have verified the \textit{triple robustness} result in Appendix~\ref{sect:robustness_safety}. The results show that the three estimates indeed converge to the true quantities, so that the e-process remains almost constant under the null and does not reject. Finally, in Appendix~\ref{appendix:effects_of_derandomization} we further discuss and simulate the effects of derandomization.



All code is available at 
\url{https://github.com/Antares01/sequential_ml_testing}.


\begin{figure}
    \centering
    \includegraphics[width=1\linewidth]{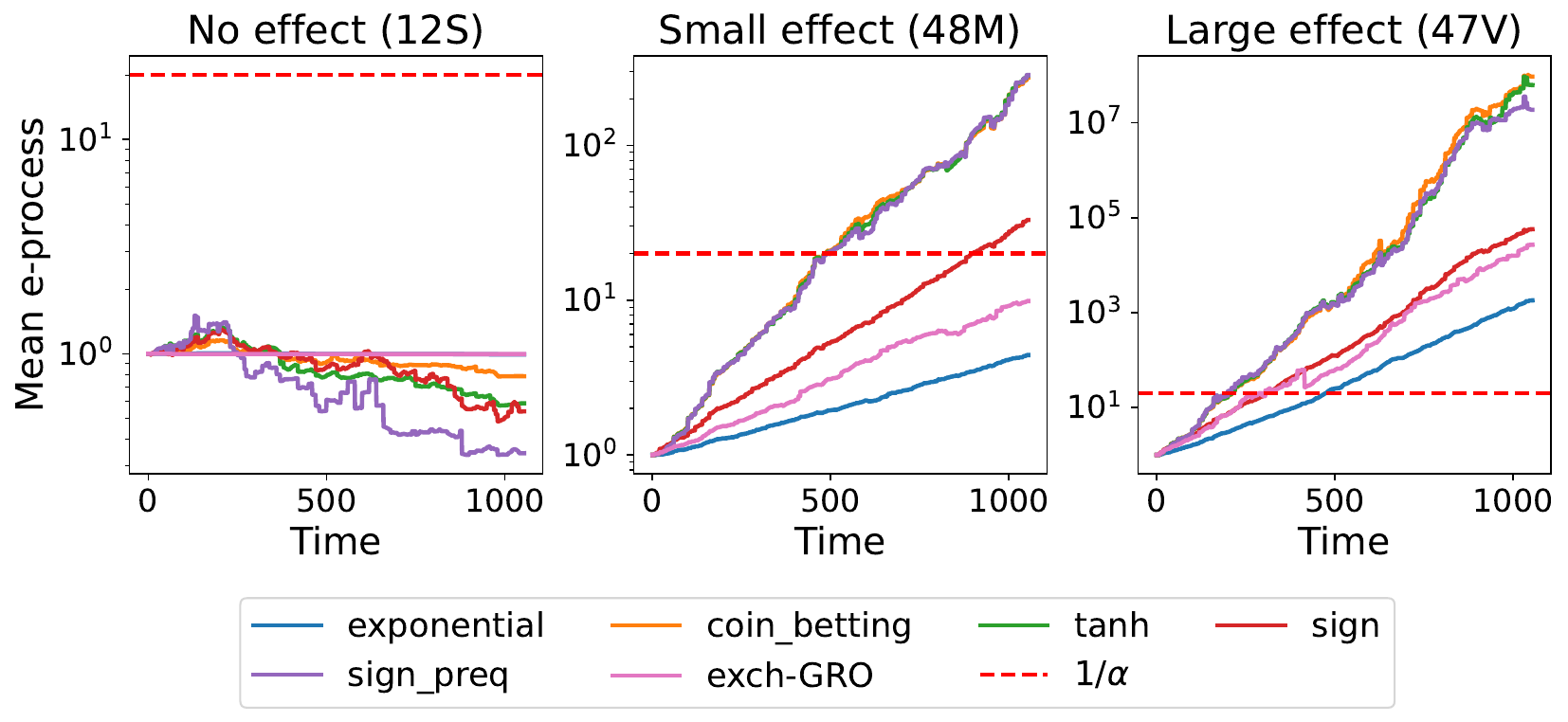}
    \caption{\textbf{HIV dataset:} Evidence collected against the null (effect on drug resistance) for three different HIV mutations. On the left, for an unimportant mutation, none of the e-processes reject. In the center, for a small signal, most antisymmetric e-proceses reject the null and outperform the exponential, which does not reject. On the right, in a large signal regime all the e-processes reject the null.}
    \label{fig:hiv_data}
\end{figure}
%

\section{Discussion}

In this work we studied how to accommodate ML models in sequential testing through e-values. We first proposed a natural way to estimate the GRO using the exponential e-value. Due to density misspecification, this induces an approximation error, and there is also an estimation error within this exponential class. Thus, it is not uniformly better than the antisymmetric e-value class, which is adapted to the exchangeability null, a much larger null hypothesis, and thereby introduces a null enlargement error. This error decomposition accounts for the fact that estimating the point alternative is not straightforward. This analysis is essential not only to better understand the behavior of both classes in practice and to compare them, but also to formally explore intermediate null hypotheses, so as to generate e-values with a lower null enlargement error by taking better advantage of the inner structure of the model-X conditional independence null.


This has great applicability because in the sequential world it is possible to successively select a mixture of tests using the previous data points. The approximation/null enlargement weighting can be data-driven and dynamic over time. 
For instance, while using coin-betting from an initial point may be a good idea, since only $\lambda$ needs to be estimated and thus the estimation error may be low, once more data is acquired, a more accurate estimate of the density e-value is obtained, yielding larger power than coin-betting, and one is then able to iteratively shift more weight toward the latter in the mixture, reducing the null enlargement error. 
Practically studying how to effectively instantiate this mixture, taking into account the intermediate null hypotheses, GRO e-values, and model training, is important future work.

Finally, challenging the model-X assumption is very relevant in practice. In the proposed new analysis, we have first been able to use the tool of approximate e-values to study the inflation of the e-value, and we have then also taken into account the structure of the statistic to exploit double robustness properties, thereby providing a more reliable tool for sequential testing using ML.

\subsection*{AI use statement}

In this work, we used generative AI tools to assist in the writing of proofs, implement methods and assist with translation.
We have not used generative AI tools to generate synthetic data sets, help develop theoretical models or conceptual frameworks, formulate mathematical claims, provide critical ingredients for proving mathematical claims, propose or refine hypotheses and interpret results. Design or provide feedback on research  methodology or experiments, clean and reformat dataset and support qualitative and thematic data analysis are not applicable to this work.
We have reviewed all AI-assisted work: LLM-generated code was verified and tested for correctness by the authors and all LLM-assisted proofs were checked line by line by the authors.
We take responsibility for the final content of this work, including text, claims, or artifacts produced with the aid of generative AI.



\subsubsection*{Acknowledgements}
This project has received funding from the European Union's Horizon Europe research and innovation programme under grant agreement No 101120237 (ELIAS); from state aid managed by the Agence Nationale de la Recherche under grant ANR-23-CE23-0016 and under grant ANR-18-EURE-0023 EUR MINT.
It is partially funded by the European Union via ERC Advanced Project 101142168 FLEX. Views and opinions expressed are however those of the author(s) only and do not necessarily reflect those of the European Union or the European Research Council Executive Agency. Neither the European Union nor the granting authority can be held responsible for them

\bibliographystyle{plainnat}   
\bibliography{biblio_AISTATS} 

\section*{Checklist}



\begin{enumerate}

  \item For all models and algorithms presented, check if you include:
  \begin{enumerate}
    \item A clear description of the mathematical setting, assumptions, algorithm, and/or model. [Yes]
    \item An analysis of the properties and complexity (time, space, sample size) of any algorithm. [Yes]
    \item (Optional) Anonymized source code, with specification of all dependencies, including external libraries. [Yes]
  \end{enumerate}

  \item For any theoretical claim, check if you include:
  \begin{enumerate}
    \item Statements of the full set of assumptions of all theoretical results. [Yes]
    \item Complete proofs of all theoretical results. [Yes]
    \item Clear explanations of any assumptions. [Yes]
  \end{enumerate}

  \item For all figures and tables that present empirical results, check if you include:
  \begin{enumerate}
    \item The code, data, and instructions needed to reproduce the main experimental results (either in the supplemental material or as a URL). [Yes]
    \item All the training details (e.g., data splits, hyperparameters, how they were chosen). [Yes]
    \item A clear definition of the specific measure or statistics and error bars (e.g., with respect to the random seed after running experiments multiple times). [Yes]
    \item A description of the computing infrastructure used. (e.g., type of GPUs, internal cluster, or cloud provider). [Not Applicable]
  \end{enumerate}

  \item If you are using existing assets (e.g., code, data, models) or curating/releasing new assets, check if you include:
  \begin{enumerate}
    \item Citations of the creator If your work uses existing assets. [Yes]
    \item The license information of the assets, if applicable. [Not Applicable]
    \item New assets either in the supplemental material or as a URL, if applicable. [Not Applicable]
    \item Information about consent from data providers/curators. [Not Applicable]
    \item Discussion of sensible content if applicable, e.g., personally identifiable information or offensive content. [Not Applicable]
  \end{enumerate}

  \item If you used crowdsourcing or conducted research with human subjects, check if you include:
  \begin{enumerate}
    \item The full text of instructions given to participants and screenshots. [Not Applicable]
    \item Descriptions of potential participant risks, with links to Institutional Review Board (IRB) approvals if applicable. [Not Applicable]
    \item The estimated hourly wage paid to participants and the total amount spent on participant compensation. [Not Applicable]
  \end{enumerate}

\end{enumerate}

\clearpage
\appendix
\thispagestyle{empty}

\onecolumn
\aistatstitle{
Supplementary Materials}

\section{Algorithms}\label{app:algo}

In this section we present the general algorithms, which are agnostic to how the model is retrained: they accommodate \textit{any} machine learning model, which can be updated either completely from scratch or online for speed. They work for any classification or regression task for which a loss measuring performance can be provided. Similarly, the 2-Exchangeability GRO e-value can accommodate any kernel density estimate. Thus, the procedures are flexible enough for many different settings.

\subsection{Exponential e-process}
\begin{algorithm}
\caption{Sequential Exponential Estimation Procedure\label{alg:ml_e_value}}

\textbf{Input:} Initial estimator $(\widehat{m}_1,\widehat{\eta}_1)$ from pretraining

Set $E_0 \gets 1$

\medskip

\textbf{for} $i = 1,2,\dots,\tau$ \textbf{do}
\begin{enumerate}
    \item Sample $(X_i,Z_i,Y_i)$.
    
    \item Compute
    \[
    E_{\mathrm{exp},i}
    :=
    \frac{
    \exp\!\left(
    -\widehat{\eta}_i(Y_i,Z_i)\,
    l\!\left(\widehat{m}_i(X_i,Z_i),Y_i\right)
    \right)
    }{
    \mathbb{E}_{\widetilde{X}\mid Z_i}\!\left[
    \exp\!\left(
    -\widehat{\eta}_i(Y_i,Z_i)\,
    l\!\left(\widehat{m}_i(\widetilde{X},Z_i),Y_i\right)
    \right)
    \right]
    }.
    \]
    
    \item Update $(\widehat{m}_{i+1},\widehat{\eta}_{i+1})$
    using the data
    \[
    [i] = \{(X_j,Z_j,Y_j)\}_{j=1}^{i}.
    \]
\end{enumerate}

\textbf{end for}

\medskip

\textbf{Return}
\[
\prod_{i=1}^{\tau} E_{\mathrm{exp},i}.
\]

\end{algorithm}

The stopping time $\tau$ is left unspecified because the guarantees allow for optional stopping, and therefore for choosing a different stopping time depending on the particular experiment, for example if the researcher obtains more funding or if it is suddenly forced to stop (\cite{grünwald2023safetesting}). However, a standard choice consists of stopping as soon as the process reaches the desired threshold of $1/\alpha$.

Note that, in practice, one simple --- although computationally expensive --- way to update the model is to retrain it from scratch using the extended training dataset containing the newly observed point. However, this can be prohibitively intensive, and lighter alternatives are available, such as updating the existing model through stochastic gradient descent (SGD).

Regarding the choice of the optimal $\eta$, although it may in principle depend on $(Z,Y)$ (see Lemma \ref{lemm:y_z_measu}) and therefore be $\sigma(Z,Y)$-measurable, allowing for such flexibility considerably increases the complexity of the learning problem. In practice, we therefore typically restrict ourselves to constant values of $\eta$. For instance, $\eta$ can be learned in a prequential fashion by maximizing the empirical past log-growth:
\begin{align}
\widehat{\eta}_i
=
\underset{\eta \in \mathbb{R}}{\mathrm{argmax}}
\sum_{k=1}^{i}
\log
\left(
\frac{
e^{-\eta\, l(\widehat{m}_k(X_k,Z_k),Y_k)}
}{
\mathbb{E}_{\widetilde{X}\mid Z_k}
\left[
e^{-\eta\, l(\widehat{m}_k(\widetilde{X},Z_k),Y_k)}
\right]
}
\right).\label{eq:eta_optim}
\end{align}

In practice this can be easily done with a grid of possible $\eta$s. Otherwise, it is possible to do a mixture instead of this plug-in estimate.



\subsection{Standard coin-betting and coin-betting with unbounded losses}\label{sect:cb_unbounded}

Coin-betting e-value algorithm consists of replacing Step 2 from Algorithm \ref{alg:ml_e_value} with
\begin{align*}
    E_{\mathrm{cb},i}:= 1+\widehat{\lambda}_i (\mathbb{E}_{\widetilde{X}\mid Z_i}[l(\widehat{m}_i(\widetilde{X}, Z_i), Y_i)]-l(\widehat{m}_i(X_i, Z_i), Y_i)),
\end{align*}
where $\widehat{\lambda}_i$ can also be learnt prequentially via

\begin{align}
\widehat{\lambda}_i
=
\underset{\lambda \in [0, 1/M]}{\mathrm{argmax}}
\sum_{k=1}^{i}
\log
\left(
1+\lambda (\mathbb{E}_{\widetilde{X}\mid Z_k}[l(\widehat{m}_k(\widetilde{X}, Z_k), Y_k)]-l(\widehat{m}_k(X_k, Z_k), Y_k))
\right),\label{eq:lambda_optim}
\end{align}

assuming that the loss is bounded by $M$.

However, even if the loss is bounded, its bounds may be unknown \emph{a priori}. Therefore, we can introduce a constant $M$ to control the loss:
\begin{align}
E_{\mathrm{cb}}^{\lambda, M} 
= 1 + \lambda \frac{1}{M}\left(
-\, l\!\left(\widehat{m}(X, Z), Y\right)\wedge M
+ \mathbb{E}_{\widetilde{X}\mid Z}\!\left[l\!\left(\widehat{m}(\widetilde{X}, Z), Y\right)\wedge M\right]
\right).
\end{align}
By doing so, we can apply this coin-betting e-variable even when the loss is not inherently bounded.

Moreover, the parameter $M$, similarly to $\lambda$, can be learned in a prequential manner from past data. Specifically,
\[
(\hat{\lambda}_n, \hat{M}_n)
:= \underset{(\lambda, M)\in [0,1]\times \mathbb{R}_+}{\mathrm{argmax}}
\prod_{i=1}^{n-1}
\left(
1 + \lambda \frac{1}{M}\left(
-\, l\!\left(\widehat{m}_i(X_i, Z_i), Y_i\right)\wedge M
+ \mathbb{E}_{\widetilde{X}_i\mid Z_i}\!\left[l\!\left(\widehat{m}_i(\widetilde{X}_i, Z_i), Y_i\right)\wedge M\right]
\right)
\right).
\]

Alternatively, since the main requirement is to ensure that the e-value remains nonnegative, we can directly clip the difference both from above and below. This yields
\begin{align}
E_{\mathrm{cb}} 
= 1 + \lambda \frac{1}{M}
\left(
\left(
-\, l\!\left(\widehat{m}(X, Z), Y\right)
+ l\!\left(\widehat{m}(\widetilde{X}, Z), Y\right)
\right)
\wedge M
\right)\vee (-M).\label{eq:cb_cap}
\end{align}

Note that this bounded coin-betting is indeed a \textit{generalization of the sign e-value} considered in \cite{shaer23a}, given by $1+\lambda\,\mathrm{sign}\!\left(l(\widehat{m}(\widetilde{X}, Z), Y)-l(\widehat{m}(X, Z), Y)\right)$ (see also Section~\ref{sect:e_values_anti_exps}), since by taking $M$ to $0$, we observe that the magnitude is completely lost and only the sign of the difference is preserved.

Finally, we can apply Rao--Blackwellization to remove the randomness due to $\widetilde{X}$ by taking conditional expectations. 

\subsection{2-Exchangeability GRO}

\begin{algorithm}[H]
\caption{Sequential 2-Exchangeability GRO Procedure}\label{alg:kde_e_value}

\textbf{Input:} Initial estimator $(\widehat{m}_1,\widehat{\eta}_1, \hat{f}_{T,\widetilde{T}, 1})$

Set $E_0 \gets 1$

\medskip

\textbf{for} $i = 1,2,\dots,\tau$ \textbf{do}
\begin{enumerate}
    \item Sample $(X_i,Z_i,Y_i)$.

    \item Compute $t_{i}= l\!\left(\widehat{m}_i(X_i,Z_i),Y_i\right)$ and $\widetilde{t}_{ i} = l\!\left(\widehat{m}_i(\widetilde{X},Z_i),Y_i\right)$
    \item Compute
    $$E_{i}^\mathrm{exch}:=\mathbb{E}_{\widetilde{X}\mid Z}\left[\frac{
\hat{f}_{T, \widetilde{T}, i}(t_i, \widetilde{t}_i)
}{
\frac12\bigl(\hat{f}_{T, \widetilde{T}, i}(t_{ i}, \widetilde{t}_{ i})+\hat{f}_{T, \widetilde{T}, i}(\widetilde{t}_{ i}, t_{i})\bigr)}\right]$$
    
    \item Update $(\widehat{m}_{i+1})$
    using the data $ [i] = \{(X_j,Z_j,Y_j)\}_{j=1}^{i}$
    and update $\hat{f}_{T, \widetilde{T}, i+1}$ using $\{(t_{ j}, \widetilde{t}_{ j})\}_{j=1}^{i}$.
\end{enumerate}

\textbf{end for}

\medskip

\textbf{Return}
\[
\prod_{i=1}^{\tau} E_{i}^\mathrm{exch}.
\]

\end{algorithm}

In Algorithm \ref{alg:kde_e_value} we provide a general implementation without specifying the kernel estimate. Note that there are several possible ways to implement this e-process, and exploring these alternatives is an interesting direction for future work. In particular, the ML model is updated sequentially, so the distribution of the importance statistics may evolve over time. Consequently, it may be advantageous for the density estimator to gradually \textit{forget} older observations and place greater emphasis on more recent ones. This can be achieved, for example, by using a sliding-window approach or by assigning exponentially decaying (or otherwise adaptive) weights to past observations, thereby allowing the estimated density to better track changes in the distribution of the importance statistics. Note also that it is optimized without taking the derandomization into account, and therefore loses a lot of power compared to the other antisymmetric e-values (see Appendix~\ref{appendix:effects_of_derandomization} and Table~\ref{tab:derandomization}). Thus, one idea would be to select a parametric model for the kernel density estimation and to optimize its parameters prequentially, like the parameter $\lambda$ of the coin-betting, rather than nonparametrically with the previous losses.

In addition, similarly to the antisymmetric e-variables, we have also derandomized the e-value by averaging over the dummy variable $\widetilde{X}$.

\paragraph{Derandomization in practice.} In all these algorithms, even if the kernel is known, the conditional expectation usually needs to be estimated. To do so, we can simply average over many resamplings from the conditional distribution. In the exponential e-variable (Algorithm~\ref{alg:ml_e_value}), we also need to include the original $X$ in the average in order to retain type-I error control, as done in \citet{Grünwald02042024}.

\paragraph{Conditional density estimation.} If the kernel is unknown and needs to be estimated, as discussed in Section~\ref{sect:safety}, we can also do so sequentially using the previous data. Thus, in the step where the model and the parameters of the e-value are updated, we would add an update of $K_{i+1}$ using $\{(X_l, Z_l)\}_{l=1}^i$. In Appendix~\ref{sect:cond_dist} we further discuss how we have implemented this conditional estimation in practice.


\section{Intermediate Null Hypotheses}
\label{appendix:hypotheses}

The aim of this section is to build a conceptual bridge between the model-X CI null \citep{Grünwald02042024} and a larger null hypothesis that is implicitly being tested by e-variables from \citet{shaer23a}. It is then a detailed and extended version of Section \ref{sect:null_intermediate}. We do so by providing a detailed analysis of several null hypotheses and corresponding GRO e-variables that lie ``in between", as shown in Table \ref{tab:null-distillation}.\\ 

\begin{table}[H]
\centering
\small
\renewcommand{\arraystretch}{1.45}
\caption{
Sequence of null hypotheses obtained by progressively distilling
information. Moving downward corresponds to retaining less information
and hence considering a larger null hypothesis. The GRO e-variable associated with
each null is shown in the final column (see \ref{prop:gro_property_proof}).
}
\label{tab:null-distillation}

\begin{tabular}{p{4cm} p{7cm} p{4cm}}
\toprule \textbf{Symbol and Name}
& \textbf{Hypothesis definition}
& \textbf{GRO e-variable}
\\
\midrule

$\mathcal{P}^\textnormal{X-CI}_{K}$: Model-X Conditional Independence
&
$\left\{P(\widetilde{X}, X, Y, Z): (\widetilde{X}, X, Y, Z)\overset{\mathrm{d}}{=}(X, \widetilde{X}, Y, Z), \right.$\newline
\hspace*{2em}$\left. P(\widetilde{X}\mid X, Y, Z)= K(\cdot\mid Z)\right\}$
&
$
\frac{f_{Y\mid X,Z}(y|x,z)}{f_{Y\mid Z}(y|z)}
$
\\[6mm]

& $\Big\downarrow$ \;\;\;\; \textcolor{red}{Coarsening error} & 
\\[5mm]

$\mathcal{P}^\textnormal{PFX-CI}_{K^{\#}}$: Push-Forward Model-X Conditional Independence 
&
$\left\{P(\widetilde{T}, T, Y, Z):(\widetilde{T}, T, Y, Z)\overset{\mathrm{d}}{=}(T, \widetilde{T}, Y, Z), \right.$\newline
\hspace*{2em}$\left.P(\widetilde{T}\mid T, Y, Z)= K^{\#}(\cdot\mid Y, Z) \right\}$
&

$\frac{f_{T\mid Y, Z}(t\mid y, z)}{f_{\widetilde{T}\mid Y, Z}(t \mid y, z)}$

\\[6mm]

& $\Big\downarrow$ \;\;\;\; \textcolor{red}{Randomization error} & 
\\[5mm]

$\mathcal{P}^{\text{cond exch}}_B$: $B$-Conditional exchangeability 
&
$\Big\{P(\widetilde{T}^B,T,Y,Z): (\widetilde{T}^B,T,Y,Z)\overset{\mathrm{d}}{=}\sigma (\widetilde{T}^B,T,Y,Z),$\newline
\hspace*{2em}$ \forall \sigma \in \mathcal{S}_{B+1} \Big\}$
&
$
\frac{f_{T,\widetilde{T}^B|Y,Z}(t,\widetilde{t}^B|y,z)}{\frac{1}{(B+1)!}\sum_{\sigma}f_{T,\widetilde{T}^B|Y,Z}(\sigma(t,\widetilde{t}^B)|y,z)}
$
\\[6mm]

& $\Big\downarrow$ \;\;\;\; \textcolor{red}{Coarsening error} & 
\\[5mm]

$\mathcal{P}^{\text{exch}}_B$: $B$-Exchangeability 
&
$\left\{P(\widetilde{T}^B,T): (\widetilde{T}^B,T)\overset{\mathrm{d}}{=}\sigma (\widetilde{T}^B,T), \forall \sigma \in \mathcal{S}_{B+1} \right\}$ 
&
$
\frac{f_{T,\widetilde{T}^B}(t,\widetilde{t}^B)}{\frac{1}{(B+1)!}\sum_{\sigma}f_{T,\widetilde{T}^B}(\sigma(t,\widetilde{t}^B))}
$
\\[4mm]

& $\Big\downarrow$ \;\;\;\; \textcolor{red}{Randomization error} & 
\\[5mm]

$\mathcal{P}^\mathrm{exch}$: Exchangeability
&
$\left\{P(\widetilde{T}, T): (\widetilde{T}, T)\overset{\mathrm{d}}{=}(T, \widetilde{T})\right\}$
&
$
\frac{
f_{T,\widetilde{T}}(t,\widetilde t)
}{
\frac{1}{2}f_{T,\widetilde{T}}(t,\widetilde t)
+
\frac{1}{2}f_{T,\widetilde{T}}(\widetilde t,t)
}
$
\\[4mm]

& $\Big\downarrow$ \;\;\;\; \textcolor{red}{Coarsening error} & 
\\[5mm]

$\mathcal{P}^\mathrm{sym}$: Symmetry 
&
$\left\{P(\widetilde{T}, T): \widetilde{T}- T \overset{\mathrm{d}}{=} T- \widetilde{T} \right\}$
&
$
\frac{
f_{\widetilde{T}-T}(\widetilde{t}-t)
}{
\frac{1}{2}f_{\widetilde{T}-T}(\widetilde{t}-t)
+
\frac{1}{2}f_{\widetilde{T}-T}(t-\widetilde{t})
}
$
\\

\bottomrule
\end{tabular}

\end{table}

\paragraph{The X-CI null.} The model-X conditional independence null hypothesis for a single datapoint was originally defined as:
\begin{equation*}
    \mathcal{P}^{\text{X-CI}}_{K} = \{P(X,Y,Z): X \indep Y | Z, P(X|Z) \sim K_Z\}
\end{equation*}

An alternative representation of the same null can be obtained by making use of an additional, dummy variable $\widetilde{X}$ conditionally sampled from $K_Z$ and independently of $(X,Y)$.

\begin{proposition} 
\label{prop:mx_as_cond_exch}
$\mathcal{P}^{\text{X-CI}}_{K}$ is equivalent to the hypothesis
\begin{equation}
\label{eq:mx_as_cond_exch}
     \{P(\widetilde{X},X,Y,Z): (\widetilde{X},X,Y,Z) \overset{d}{=} (X, \widetilde{X},Y,Z),  P(\widetilde{X}|X,Y,Z) \sim K_Z \}
\end{equation}
in the sense that any distribution $P(X,Y,Z) \in \mathcal{P}^{\text{X-CI}}_{K}$ can be extended to an element of \eqref{eq:mx_as_cond_exch} by adding the additional variable $\widetilde{X}$ distributed as $K_Z$ given $(X,Y,Z)$, and conversely, any distribution $P(\widetilde{X},X,Y,Z)$ in \eqref{eq:mx_as_cond_exch} has an $(X,Y,Z)$-marginal that belongs to $\mathcal{P}^{\text{X-CI}}_{K}$.
\end{proposition}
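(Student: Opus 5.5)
We prove the two directions separately, working with regular conditional distributions throughout, so that ``$P(\widetilde{X}\mid X,Y,Z)\sim K_Z$'' means the conditional law of $\widetilde{X}$ given $(X,Y,Z)=(x,y,z)$ is $K(\cdot\mid z)$ for $P$-almost every $(x,y,z)$.

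\emph{Forward direction.} Take $P(X,Y,Z)\in\mathcal{P}^{\text{X-CI}}_{K}$ and define the extension $P(\widetilde{X},X,Y,Z)=P(X,Y,Z)\otimes K(\cdot\mid Z)$, i.e.\ $\widetilde{X}$ is drawn from $K_Z$ independently of $(X,Y)$ given $Z$. The kernel condition holds by construction, so only the swap invariance needs checking. Disintegrate along $Z$. Since $X\indep Y\mid Z$ and $X\mid Z\sim K_Z$, and since $\widetilde{X}$ is sampled independently of $(X,Y)$ given $Z$, the conditional law of $(\widetilde{X},X,Y)$ given $Z=z$ is the product $K(\cdot\mid z)\otimes K(\cdot\mid z)\otimes P_{Y\mid Z=z}$. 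This product is invariant under swapping its first two coordinates. Integrating over $P_Z$ gives $(\widetilde{X},X,Y,Z)\overset{d}{=}(X,\widetilde{X},Y,Z)$. One checks equality on rectangles $A\times B\times C\times D$ and extends by a $\pi$-$\lambda$ argument.

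\emph{Converse direction.} Take $P(\widetilde{X},X,Y,Z)$ in \eqref{eq:mx_as_cond_exch}. Since the conditional law of $\widetilde{X}$ given $(X,Y,Z)$ is $K_Z$, which depends only on $Z$, we have $\widetilde{X}\indep (X,Y)\mid Z$ and $\widetilde{X}\mid Z\sim K_Z$. Exchangeability implies that $(X,Y,Z)\overset{d}{=}(\widetilde{X},Y,Z)$: apply the swap and take the marginal over coordinates $2,3,4$ versus $1,3,4$. Two conclusions follow.

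First, marginalizing out $Y$ gives $(X,Z)\overset{d}{=}(\widetilde{X},Z)$, so $X\mid Z\sim K_Z$ almost surely.

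Second, conditional independence transfers through this equality in law. For bounded measurable $f,g,h$,
\[
\mathbb{E}[f(X)g(Y)h(Z)]=\mathbb{E}[f(\widetilde{X})g(Y)h(Z)]=\mathbb{E}\bigl[\textstyle\int f\,dK_Z\; g(Y)h(Z)\bigr],
\]
where the last equality uses $\widetilde{X}\indep Y\mid Z$. Since $\int f\,dK_Z=\mathbb{E}[f(X)\mid Z]$, this reads $\mathbb{E}[f(X)g(Y)h(Z)]=\mathbb{E}\bigl[\mathbb{E}[f(X)\mid Z]\,g(Y)h(Z)\bigr]$ for all such $f,g,h$. This characterizes $X\indep Y\mid Z$. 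Hence the $(X,Y,Z)$-marginal lies in $\mathcal{P}^{\text{X-CI}}_{K}$.

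The main obstacle is the measure-theoretic bookkeeping: ensuring conditional distributions exist (the spaces are Euclidean, hence standard Borel) and turning the pointwise kernel statement into conditional independence. The converse step is the delicate one. There the key move is to read the kernel condition as the statement that $\widetilde{X}$ is independent of $(X,Y)$ given $Z$, and then transfer this property to $X$ through the swap symmetry.
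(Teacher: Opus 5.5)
Your proof is correct and follows essentially the paper's route. In the forward direction, both arguments note that the law of $(\widetilde{X},X,Y)$ given $Z$ (the paper conditions on $(Y,Z)$) is a product with two identical $K_Z$ factors, hence swap-invariant. In the converse, both use the swap to move the kernel property from $\widetilde{X}$ to $X$: the paper transfers the full conditional law $P(X\mid\widetilde{X},Y,Z)=K_Z$, while you transfer only the marginal $(X,Y,Z)\overset{d}{=}(\widetilde{X},Y,Z)$ and check $X\indep Y\mid Z$ with test functions, which is more explicit but the same idea.
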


 This, seemingly unnecessary, extension allows us to represent the conditional independence testing problem as one of testing \textbf{conditional exchangeability}, making the comparison with the setting of \citet{shaer23a} more natural. From now on, we make no distinction between $\mathcal{P}^{\text{X-CI}}_{K}$ and \eqref{eq:mx_as_cond_exch}.

Note that $\widetilde{X}$ is indeed a dummy variable. The same $(X,Y,Z)$ - conditional distribution will always be assumed under the alternative hypothesis, meaning that the statistic $(X,Y,Z)$ is sufficient for both null and alternative hypotheses, and thus $\widetilde{X}$ will never appear as an argument in an optimal test (in the Neyman-Pearson or the e-power sense). 

The GRO e-variable for testing the MX null against any point alternative $Q$ with $Q(X|Z) = K_Z$ can now be represented in several equivalent ways. Assume that $f_{\widetilde{X},X,Y,Z}$ is the probability density function of the joint distribution of the data under the alternative:

\begin{equation*}
    E^{\text{X-CI}}(x,y,z) = \frac{f_{Y|X,Z}(y|x,z)}{f_{Y|Z}(y|z)}= \frac{f_{X|Y,Z}(x|y,z)}{f_{X|Z}(x|z)}= \frac{f_{X|Y,Z}(x|y,z)}{f_{\widetilde{X}|Y,Z}(x|y,z)}
\end{equation*}
are equivalent ways to represent the GRO e-variable.

What all these representations have in common is that in order to use the statistic for testing in practice, one of the conditional densities $f_{Y|X,Z}$ or $f_{X|Y,Z}$ has to be estimated, which can be hard. To alleviate this task, one could focus on an importance statistic  $T$, i.e. a function of the data triplet $(X,Y,Z)$ whose conditional distribution could be easier to estimate than that of $X$ given $Y,Z$. 
For example, fix $T = l(\hat{m}(X,Z),Y), \widetilde{T} = l(\hat{m}(\widetilde{X},Z),Y)$. 

\paragraph{The PFX-CI null.} Thus, based on the importance statistic, we can formulate a new, push-forward model-X conditional independence null hypothesis:

\begin{equation*}
    \mathcal{P}^{\text{PFX-CI}}_{K^{\#}} = \{P(\widetilde{T},T,Y,Z): (\widetilde{T},T,Y,Z) \overset{d}{=} (T, \widetilde{T},Y,Z),  P(\widetilde{T}|T,Y,Z) \sim K^{\#}_{Y,Z} \},
\end{equation*}

where $K^{\#}_{y,z}$ is the push-forward distribution $\widetilde{T}|Y=y,Z=z$.
The growth-rate optimal e-variable for this null is given by the conditional likelihood ratio:
\begin{equation*}
    E ^{\text{PFX-CI}}(t,y,z) =  \frac{f_{T|Y,Z}(t|y,z)}{f_{\widetilde{T}|Y,Z}(t|y,z)},
\end{equation*}
similar the third representation of $E^{\text{X-CI}}$. 

The e-power of $E^{\text{PFX-CI}}$ is upper bounded by that of $E^{\text{X-CI}}$, which is given by the conditional mutual information $I(X;Y\mid Z)$ between $X$ and $Y$ given $Z$:
\begin{equation*}
    \mathbb{E}_{Y,Z} [D_{KL}(Q(T|Y,Z) ||Q(\widetilde{T}|Y,Z))] \leq \mathbb{E}_{Y,Z} [D_{KL}(Q(X|Y,Z) ||Q(\widetilde{X}|Y,Z))] \overset{def}{=} I_Q(X;Y|Z).
\end{equation*}
This power gap represents the \textbf{coarsening error}, and is a consequence of the data-processing inequality:
\begin{equation*}
    D_{KL}(Q(T|Y=y,Z=z) ||Q(\widetilde{T}|Y=y,Z=z)) \leq D_{KL}(Q(X|Y=y,Z=z) ||Q(\widetilde{X}|Y=y,Z=z)) \text{ } \forall y,z.
\end{equation*}

In a practical setting, an e-power increase could come from adequate importance statistics choices, which could make learning the true conditional density $f_{T|Y,Z}$ easier than learning  $f_{X|Y,Z}$ or $f_{Y|X,Z}$ from the data, thus lowering the approximation and estimation errors.\\

\paragraph{The $B$-conditional exchangeability nulls.}  As we have seen in previous work \citep{shaer23a}, the model-X kernel $K$ can be used to generate a finite number $B$ of $\widetilde{X}$ samples, making antisymmetric e-variables practically applicable and yielding a monotonic e-power increase as $B$ grows (see Proposition \ref{prop:rb_monotonically_increases_power}). Any e-variable of that form is necessarily weaker than the limiting -- Rao-Blackwellized e-variable -- if such an object is well defined, which makes use of infinitely many derandomization samples ($B = \infty$). The following family of null hypotheses in our hierarchy and the corresponding GRO e-variables allow for a simple demonstration of this fact. The $B$ - conditional exchangeability null is defined as:

\begin{equation*}
    \mathcal{P}^{\text{cond exch}}_B = \{P(\widetilde{T}^B,T,Y,Z): (\widetilde{T}^B,T,Y,Z) \overset{d}{=} (\sigma(\widetilde{T}^B,T),Y,Z) \text{ for any permutation } \sigma\in \mathcal{S}_{B+1} \}.
\end{equation*}

Let the point alternative distribution $Q(\widetilde{T}^B,T,Y,Z)$  again be given by the density function $f$. The GRO e-variable for testing the conditional exchangeability null with $B$ derandomization samples is (Proposition \ref{prop:gro_property_proof}) 
\begin{equation*}
   E^{\text{cond exch}}_B(t,\widetilde{t}^B,y,z) = \frac{f_{T, \widetilde{T}^B|Y,Z}(t,\widetilde{t}^B|y,z)}{\frac{1}{(B+1)!}\sum_{\sigma} f_{T,\widetilde{T}^B|Y,Z}(\sigma(t,\widetilde{t}^B)|y,z)},
\end{equation*}
which, assuming conditional independence of $\{\widetilde{T}_1,...,\widetilde{T}_B,T\}$ given $(Y,Z)$ under the alternative, and identical conditional distributions of $\{\widetilde{T}_i \}$ converges to (see Appendix \ref{sect:limiting_gro_cond_exch})
\begin{equation*}
    \frac{f_{T|Y,Z}(T|Y,Z)}{f_{\widetilde{T}|Y,Z}(T|Y,Z)},
\end{equation*}
$Q$-a.s.
Therefore, we recover the GRO e-variable for the push-forward null hypothesis as a limit of GRO e-variables for $B$ - conditional exchangeability nulls. In the conditional exchangeability null the model-X assumption completely disappears, and is replaced with $B$ samples from the conditional distribution $K_{Y,Z}^{\#}$. Thus, going from the PFX-CI null to the conditional exchangeability null can be viewed as a way of simplifying the testing problem by removing perfect model-X knowledge and replacing it with (many) samples from the kernel. The e-power price that is paid for making this simplification is the \textbf{randomization error}:
\begin{equation*}
    \mathbb{E}[\ln(E^{\text{PFX-CI}})] - \mathbb{E}[\ln(E^{\text{cond exch}}_B)],
\end{equation*}
which is decreasing in $B$ and vanishes as $B \to \infty$ (see Proposition \ref{prop:cond_exch_e_power_seq}).\\

\paragraph{The $B$-exchangeability nulls.} The $B$-exchangeability family of null hypotheses can be viewed as the nulls that were implicitly tested by \citet{shaer23a} with their antisymmetric e-variables, whenever they employed derandomization:

\begin{equation*}
    \mathcal{P}_B^{\textnormal{exch}} = \{P(\widetilde{T}^B,T): (\widetilde{T}^B,T) = \sigma(\widetilde{T}^B,T) \text{ for any permutation } \sigma\in \mathcal{S}_{B+1} \}.
\end{equation*}
The difference compared to the conditional exchangeability null is that variables $(Y,Z)$ are treated as latent. Only the importance statistics $T, \widetilde{T}^B$ are used by the test-statistics, and exchangeability is tested only marginally, i.e. on average over $(Y,Z)$.

The GRO e-variables for this family are given by
\begin{equation*}
    E_B^{\text{exch}}(t,\widetilde{t}^B) = \frac{f_{T,\widetilde{T}^B}(t,\widetilde{t}^B)}{\frac{1}{(B+1)!} \sum_{\sigma} f_{T,\widetilde{T}^B}(\sigma(t,\widetilde{t}^B))}.
\end{equation*}
$(Y,Z)$-conditional exchangeability implies marginal exchangeability, meaning that 
\begin{equation*}
    \mathcal{P}_B^{\text{cond exch}} \subset \mathcal{P}_B^{\text{exch}}
\end{equation*}
holds if we only focus on the $(T, \widetilde{T}^B)$ - marginals. 
Therefore, $E_B^{\text{exch}}$ can only have a lower e-power than $E_B^{\text{cond exch}}$.

Using $\varphi(y,z) = Q(\widetilde{T}|Y=y,Z=z)$, evaluated at $(T,\widetilde{T}^B)$, the e-variable $Q$ - a.s. converges to
\begin{equation*}
    \frac{f_{T|\varphi(Y,Z)}(T|\varphi(Y,Z))}{  f_{\widetilde{T}|Y,Z}(T|Y,Z)} = \frac{f_{T|\varphi(Y,Z)}(T|\varphi(Y,Z))}{  f_{\widetilde{T}|\varphi(Y,Z)}(T|\varphi(Y,Z))}
\end{equation*}
 as $B \to \infty$  under regularity conditions (see Proposition \ref{prop:gro_for_exch_limit}).\\
 The $Q$-a.s. equality of the denominators holds because by definition of $\varphi$
 \begin{equation*}
    \widetilde{T} \indep (Y,Z)| \varphi(Y,Z),  \text{ i.e. } Q(\widetilde{T}|Y,Z) = Q(\widetilde{T}|\varphi(Y,Z)),
 \end{equation*}
so there exists a version of the conditional density $f_{\widetilde{T}|Y,Z}$ which only depends on $(y,z)$ through $\varphi(y,z)$.

A direct consequence of this result together with the convexity property of the KL-divergence is that even with infinite derandomization samples, the GRO e-variables (and thus all e-variables) for the exchangeability nulls generally can not achieve the e-power of $E^{\text{PFX-CI}}$:
\begin{equation*}
    \mathbb{E}_{\varphi(Y,Z)}[D_{KL}(f_{T|\varphi(Y,Z)} ||f_{\widetilde{T}|\varphi(Y,Z)})] \leq \mathbb{E}_{Y,Z}[D_{KL}(f_{T|Y,Z} ||f_{\widetilde{T}|Y,Z})].
\end{equation*}

The inequality can be strict when $\varphi$ is not injective, i.e. when the true underlying $(Y,Z)$ pair is not identifiable based on an infinite sample of $\widetilde{T}_i$.\\
Furthermore, the analysis suggests another approach for model-X-based conditional independence testing. Instead of estimating the conditional density $f_{T|Y,Z}$, which can lead to large estimation errors, pick a function $s$, estimate the simpler conditional density $f_{T|s(Y,Z)}$ and test with
\begin{equation*}
    \frac{f_{T|s(Y,Z)}(t|s(y,z))}{f_{\widetilde{T}|Y,Z}(t|y,z)}.
\end{equation*}

As before, the conditional density in the denominator is known exactly, or can be estimated to arbitrary precision by sampling many $\widetilde{X}$ values from $K_Z$ and computing $\widetilde{T}$ samples. As long as the model-X assumption holds, this conditional likelihood ratio always provides type-I error guarantees, no matter what we choose for $s$ and $f_{T|s(Y,Z)}$. A coarser function $s$ leads to easier conditional density estimation in the numerator, but also to a larger loss of e-power under perfect estimation. Furthermore, we know that for a fine enough $s$, i.e. any $s$ such that $\varphi$ is a function of $s$, the achieved e-power with perfect conditional density estimation is greater than the e-power of any exchangeability e-variable, no matter how many derandomization samples are used to enhance it.\\

\paragraph{The exchangeability and symmetry nulls.} In their theoretical analysis, \citet{shaer23a}, similarly to knockoffs~\citep{candes2018panning}, focus on testing the exchangeability of $T$ and a single sample $\widetilde{T}$, without leveraging derandomization. Thus, in reality, they test the exchangeability null hypothesis
\begin{equation*}
    \mathcal{P}^{\text{exch}} = \{P(T,\widetilde{T}): (T,\widetilde{T}) \overset{d}{=} (\widetilde{T},T)\},
\end{equation*}
proposing e-variables of the form
\begin{equation*}
    1 + \lambda g(\widetilde{T},T),
\end{equation*}
where $g$ is a 2-dimensional antisymmetric function as in \eqref{eq:anti_def_shaer}.
All practical antisymmetric e-variables they provide have the even more restricted form
\begin{equation*}
    1 + \lambda g(\widetilde{T}-T),
\end{equation*}
where $g$ is now a 1-dimensional function antisymmetric about $0$ ($g(-x)=-g(x)$), and as such they are e-variables for the larger -- symmetry null hypothesis

\begin{equation*}
    \mathcal{P}^{\text{sym}} = \{P(T, \widetilde{T}): \widetilde{T}-T \overset{d}{=} T - \widetilde{T}\}.
\end{equation*}

The corresponding GRO e-variables for testing against an alternative $Q(T, \widetilde{T})$ with density $f$ are 
\begin{equation*}
    E^{\text{exch}}(t,\widetilde{t}) = \frac{f_{T,\widetilde{T}}(t,\widetilde{t})}{\frac{1}{2}f_{T,\widetilde{T}}(t,\widetilde{t}) + \frac{1}{2}f_{T,\widetilde{T}}(\widetilde{t},t)}, \text{  } E^{\text{sym}}(\widetilde{t}-t) = \frac{f_{\widetilde{T}-T}(\widetilde{t}-t)}{\frac{1}{2}f_{\widetilde{T}-T}(\widetilde{t}-t) + \frac{1}{2}f_{\widetilde{T}-T}(t-\widetilde{t})}.
\end{equation*}

The practical implementation and usage of these GRO e-variables is relatively easily achievable. They only require the estimation of a respectively 2 or 1 - dimensional density function, instead of a $(B+1)$-dimensional density, or the conditional densities of $Y|X,Z$ or $T|Y,Z$, which as we have seen is necessary for optimally testing the nulls that are higher in our hierarchy.\\
However, the optimality of these GRO e-variables is theoretically guaranteed \textbf{only if no derandomization is performed}. If the e-variables are enhanced by derandomization, the antisymmetric e-variables of $1+\lambda g$ form can \textbf{gain more e-power} than both $E^{\text{exch}}$ and $E^{\text{sym}}$ gain, and thus can surpass their e-powers. In other words, the \textbf{GRO property is not preserved after derandomization}, which shows the crucial importance of including the number of derandomization samples in the definition of our intermediate nulls.
Theory ensures:
\begin{align*}
    \mathbb{E}\left[\ln\left(1+\lambda g(T, \widetilde{T})\right)\right] &\leq \mathbb{E}\left[\ln\left(E^{\text{exch}}(T,\widetilde{T})\right)\right], \text{ and }\\
    \mathbb{E}\left[\ln\left( 1 + \frac{1}{B} \sum_{i=1}^B \lambda g(T, \widetilde{T_i})\right)\right] &\leq \mathbb{E}\left[\ln\left(E^{\text{exch}}_B(T, \widetilde{T}^B)\right)\right],
\end{align*}
for any antisymmetric $g$, and under any point alternative $Q$ with density $f$ (on which the e-variables on the right-hand side depend), but in general
\begin{equation*}
    \mathbb{E}\left[\ln\left( 1 + \frac{1}{B} \sum_{i=1}^B \lambda g(T, \widetilde{T_i})\right)\right] \not \leq \mathbb{E}\left[\ln\left(\frac{1}{B} \sum_{i=1}^B E^{\text{exch}}(T, \widetilde{T_i})\right)\right].
\end{equation*}
On the other hand, $E^{\text{exch}}$ and $E^{\text{sym}}$ make use of asymmetry signals beyond non-0 mean and median values of $\widetilde{T}-T$, and could achieve a large e-power even in settings when: $1+ \lambda \operatorname{sign}(\widetilde{T}-T), 1+\lambda \tanh(\widetilde{T}-T)$ and $1+\frac{\lambda}{M} \operatorname{clip}(\widetilde{T}-T, -M,M)$ all have e-power very close or equal to $0$ at the optimal betting parameters.\\
For more details on the effects of derandomization on the e-powers of antisymmetric e-variables, $E^{\text{exch}}$ and $E^{\text{sym}}$ see Appendix \ref{appendix:effects_of_derandomization}.

\section{From approximate e-variables to approximate type-I error}\label{sect:approx_e_values}

We begin by formalizing the notion of an approximate e-variable and an approximate e-process.

\begin{definition}[Approximate e-variable]
Let $\mathcal{P}$ be a class of probability measures. A nonnegative random variable $E$ is an $(\epsilon,\delta)$-approximate e-variable for $\mathcal{P}$ if, for every $P\in\mathcal{P}$, there exists an event $A$ such that
\begin{align*}
\mathbb{E}_{P}\left[E\ind_{A}\right]
\leq 1+\epsilon
\qquad
\text{ and }
\qquad
P(A)
\geq 1-\delta.
\end{align*}
\end{definition}

Note that this definition is not identical to the definition of an approximate e-variable given in \cite{ignatiadis2025asymptoticcompoundevaluesmultiple}, although it implies the latter; see, for example, Proposition 10.2 of \cite{Ramdas_2025}. For simplicity, we use fixed values of $\epsilon$ and $\delta$ rather than allowing them to depend on $P$.

The definition extends naturally to an approximate e-process.

\begin{definition}[Approximate e-process]\label{def:approx_e_process}
Let $(E_t)_{t\geq 1}$ be a nonnegative process adapted to $(\mathcal{F}_t)_{t\geq 0}$, with $E_0=1$. We say that $(E_n)_{n\geq 1}$ is an ${(\epsilon_n,\delta_n)}_{t\geq1}$-approximate e-process for $\mathcal{P}$ if, for every $P\in\mathcal{P}$ and every $t\geq1$, there exists an event $A_n\in\mathcal{F}_n$ such that
\begin{align*}
\mathbb{E}_{P}
\left[
E_t\ind_{A_n}
\middle|
\mathcal{F}_{n-1}
\right]
&\leq
(1+\epsilon_n)E_{n-1},
\end{align*}
and
\begin{align*}
P(A_n\mid\mathcal{F}_{n-1})
\geq 1- \delta_n
\qquad\text{a.s.}
\end{align*}
\end{definition}

For this approximate e-processes we can obtain an approximate type-I error control.

\begin{proposition}\label{prop:approx_e_type_I}
Let $(E_t)_{t\geq1}$ be an ${(\epsilon_n,\delta_n)}_{t\geq1}$-approximate e-process for $\mathcal{P}$, with $E_0=1$. 
Then, for every $P\in\mathcal{P}$ and every $N\geq1$,
\begin{align*}
P
\left(
\sup_{1\leq n\leq N}
E_{n}
\geq \frac{1}{\alpha}
\right)
&\leq
\sum_{n=1}^N \delta_n
+
\alpha
\prod_{t=1}^N(1+\epsilon_n)
\\ &\leq
\sum_{n=1}^N \delta_n
+
\alpha
\exp\left(
\sum_{n=1}^N\epsilon_n
\right).
\end{align*}
\end{proposition}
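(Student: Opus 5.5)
The plan is to build a nonnegative supermartingale that agrees with $E_n$ on the good events, after normalizing away the inflation factors, and then apply Ville's inequality. Set $G_n := \bigcap_{k=1}^n A_k$, $G_0$ the whole space, and
\[
M_n := \frac{E_n \ind_{G_n}}{\prod_{k=1}^n (1+\epsilon_k)}, \qquad M_0 = E_0 = 1.
\]
The goal is to show that $(M_n)_{n\ge 0}$ is a nonnegative supermartingale with respect to $(\mathcal{F}_n)$. The identity $\ind_{G_n} = \ind_{G_{n-1}}\ind_{A_n}$ holds with $G_{n-1}\in\mathcal{F}_{n-1}$. Together with Definition~\ref{def:approx_e_process} (read as $\mathbb{E}_P[E_n\ind_{A_n}\mid\mathcal{F}_{n-1}]\le(1+\epsilon_n)E_{n-1}$), this gives
\[
\mathbb{E}_P[E_n\ind_{G_n}\mid\mathcal{F}_{n-1}] = \ind_{G_{n-1}}\,\mathbb{E}_P[E_n\ind_{A_n}\mid\mathcal{F}_{n-1}] \le (1+\epsilon_n)\,E_{n-1}\ind_{G_{n-1}} .
\]
Dividing by $\prod_{k\le n}(1+\epsilon_k)$ then yields $\mathbb{E}_P[M_n\mid\mathcal{F}_{n-1}]\le M_{n-1}$. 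Nonnegativity is immediate since $E_n\ge 0$.

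Next, decompose the crossing event. If some $n\le N$ has $E_n\ge 1/\alpha$, then either $G_N^c$ occurs, or $G_N$ occurs, in which case every $G_n$ with $n\le N$ occurs. In the second case $M_n = E_n/\prod_{k\le n}(1+\epsilon_k) \ge \frac{1}{\alpha \prod_{k\le N}(1+\epsilon_k)}$, using $\epsilon_k\ge 0$ so the partial products are monotone. Hence
\[
P\Bigl(\sup_{n\le N}E_n\ge 1/\alpha\Bigr) \le P(G_N^c) + P\Bigl(\sup_{n\le N} M_n \ge \tfrac{1}{\alpha\prod_{k\le N}(1+\epsilon_k)}\Bigr).
\]
Ville's inequality for the nonnegative supermartingale $M$ with $M_0=1$ bounds the second term by $\alpha\prod_{k\le N}(1+\epsilon_k)$.

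For the first term, a union bound gives $P(G_N^c)\le\sum_{n\le N}P(A_n^c)$. Taking expectations of the conditional bound $P(A_n^c\mid\mathcal{F}_{n-1})\le\delta_n$ gives $P(A_n^c)\le\delta_n$, so $P(G_N^c)\le\sum_n\delta_n$. The second displayed inequality then follows from $1+x\le e^x$.

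The main obstacle is the bookkeeping in the definition. The inflation must be applied relative to $E_{n-1}$, not $E_{n-1}\ind_{G_{n-1}}$, so one has to check that multiplying by the $\mathcal{F}_{n-1}$-measurable indicator $\ind_{G_{n-1}}$ commutes with the conditional expectation as used above. One also needs $\epsilon_n\ge 0$ (or else to use $\max_n\prod_{k\le n}$ in place of $\prod_{k\le N}$) so that the threshold for $M$ is uniform over $n\le N$.
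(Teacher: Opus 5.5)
Your proposal is correct and is essentially the paper's argument: the same process $M_n = E_n\ind_{G_n}/\prod_{k\le n}(1+\epsilon_k)$ with $G_n=\bigcap_{k\le n}A_k$, Ville's inequality applied at level $\alpha\prod_{k\le N}(1+\epsilon_k)$ on $G_N$, and a union bound giving $P(G_N^c)\le\sum_{n\le N}\delta_n$. You state explicitly that $\epsilon_n\ge 0$ is needed so that $E_n\le \prod_{k\le N}(1+\epsilon_k)\, M_n$ holds on $G_N$ uniformly in $n\le N$; the paper relies on this assumption only implicitly.
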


The proposition shows that the type-I error incurred by using an approximate e-process is controlled by two terms: the probability that at least one of the approximation events fails, and the cumulative multiplicative inflation of the e-process. In particular,
\begin{align*}
P
\left(
\sup_{1\leq n\leq N}
E^{(n)}
\geq \frac{1}{\alpha}
\right)
-\alpha
&\leq
\sum_{n=1}^N\delta_n
+
\alpha
\left[
\exp\left(
\sum_{n=1}^N\epsilon_n
\right)-1
\right].
\end{align*}

Consequently, whenever
$
\sum_{n=1}^N\epsilon_n=o(1),
$
we have
\begin{align*}
\exp\left(
\sum_{n=1}^N\epsilon_n
\right)-1
&=
O\left(
\sum_{n=1}^N\epsilon_n
\right),
\end{align*}
and hence
\begin{align*}
P
\left(
\sup_{1\leq n\leq N}
E^{(n)}
\geq \frac{1}{\alpha}
\right)
-\alpha
&=
O\left(
\sum_{n=1}^N\delta_n
+
\alpha\sum_{n=1}^N\epsilon_n
\right).
\end{align*}
Thus, the type-I error inflation resulting from using an approximate rather than an exact e-process is controlled by the cumulative probability of the approximation events failing and the cumulative multiplicative inflation of the e-process.

\section{Proofs}

\subsection{$\sigma(Y, Z)$-measurability of $\lambda$ and $\eta$}

\begin{lemma}
Any $\hat{\eta}_n$ that is $\sigma\!\left(
Y_n, Z_n, \{(X_i, Y_i, Z_i)\}_{i=1}^{n-1}
\right)\text{-measurable}$ provides a valid exponential supermartingale. Similarly, any $\hat{\lambda}_n$ that is
$\sigma\!\left(
Y_n, Z_n, \{(X_i, Y_i, Z_i)\}_{i=1}^{n-1}
\right)\text{-measurable}$ provides a valid coin-betting supermartingale.\label{lemm:y_z_measu}
\end{lemma}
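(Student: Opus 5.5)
The plan is to show that, for each $n$, the factor $E_n$ satisfies $\mathbb{E}_P[E_n \mid \mathcal{G}_n] \le 1$ under every $P\in\mathcal{P}^{\textnormal{X-CI}}_K$. Here $\mathcal{G}_n := \sigma\!\left(Y_n, Z_n, \{(X_i,Y_i,Z_i)\}_{i=1}^{n-1}\right)$ is the enlarged conditioning $\sigma$-field. Since $\mathcal{F}_{n-1}\subseteq\mathcal{G}_n$, the tower property then gives $\mathbb{E}_P[E_n\mid\mathcal{F}_{n-1}]\le 1$. So the product $\prod_{i\le n}E_i$ is a nonnegative supermartingale with respect to $(\mathcal{F}_n)$, which is what validity means here. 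The key structural fact is the one coming from the null itself (Proposition~\ref{prop:mx_as_cond_exch}): conditionally on $\mathcal{G}_n$, the fresh $X_n$ has law $K(\cdot\mid Z_n)$. This uses conditional independence $X_n\indep Y_n\mid Z_n$ together with independence of the $n$-th draw from the past (or, more generally, the sequential model-X null conditional on the past). Moreover, $\widetilde X_n$ is drawn from the same kernel, independently of $X_n$ given $\mathcal{G}_n$. The fitted model $\widehat m_n$ is $\mathcal{F}_{n-1}$-measurable, and $\hat\eta_n,\hat\lambda_n$ are $\mathcal{G}_n$-measurable by assumption. Hence, given $\mathcal{G}_n$, every quantity except $X_n$ and $\widetilde X_n$ can be treated as a constant.

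\emph{Exponential case.} We condition on $\mathcal{G}_n$ and write $\phi(x):=\exp(-\hat\eta_n\, l(\widehat m_n(x,Z_n),Y_n))$, which is a deterministic function of $x$ given $\mathcal{G}_n$. The denominator of $E_{\mathrm{exp},n}$ is $\mathbb{E}_{\widetilde X\sim K(\cdot\mid Z_n)}[\phi(\widetilde X)]$, which is $\mathcal{G}_n$-measurable. The numerator has conditional expectation $\mathbb{E}[\phi(X_n)\mid\mathcal{G}_n]=\int \phi\, dK(\cdot\mid Z_n)$, because $X_n\mid\mathcal{G}_n\sim K(\cdot\mid Z_n)$. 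The ratio therefore has conditional expectation exactly $1$. On the event where the denominator is $0$ or $\infty$ the ratio is not defined; there we adopt the convention $E=1$, and the case cannot arise anyway since $\phi>0$ and the loss is nonnegative, so $\phi\le 1$.

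\emph{Coin-betting case.} Given $\mathcal{G}_n$, the pair $(X_n,\widetilde X_n)$ is i.i.d.\ from $K(\cdot\mid Z_n)$ and hence exchangeable. So $D_n:=l(\widehat m_n(\widetilde X_n,Z_n),Y_n)-l(\widehat m_n(X_n,Z_n),Y_n)$ is conditionally symmetric, with $\mathbb{E}[D_n\mid\mathcal{G}_n]=0$. The same holds, with the same argument, for the Rao--Blackwellized version, where $\widetilde X_n$ is averaged out against $K(\cdot\mid Z_n)$. Since $\hat\lambda_n$ is $\mathcal{G}_n$-measurable, we get $\mathbb{E}[1+\hat\lambda_n D_n\mid\mathcal{G}_n]=1+\hat\lambda_n\cdot 0=1$. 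Nonnegativity follows from the constraint $\hat\lambda_n\in[0,1/M]$ together with boundedness of the loss by $M$, or from the clipping in \eqref{eq:cb_cap}.

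The main obstacle I expect is justifying rigorously that $X_n\mid\mathcal{G}_n\sim K(\cdot\mid Z_n)$, i.e.\ that conditioning additionally on $Y_n$ does not distort the kernel. This is precisely where the null $X\indep Y\mid Z$ enters. It must be combined with the sequential structure (the data point at time $n$ independent of the past, or conditional independence given $\mathcal{F}_{n-1}$), and with the requirement that $\widehat m_n$ not peek at $X_n$. I would state this as a short preliminary claim using regular conditional distributions, and then apply Fubini to the bounded or nonnegative integrands above.
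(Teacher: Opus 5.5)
Your proposal is correct and follows essentially the same route as the paper: condition on $\sigma(Y_n,Z_n,\mathcal{D}_{n-1})$, note that $\hat\eta_n$ (or $\hat\lambda_n$), $\widehat m_n$ and the normalizing denominator are measurable with respect to it while $X_n$ keeps conditional law $K(\cdot\mid Z_n)$ under the null, so each factor has conditional expectation $1$, and conclude with the tower property. You also spell out the coin-betting case and the supermartingale step, which the paper leaves implicit; the only small slip is that $\phi\le 1$ requires $\hat\eta_n\ge 0$ (the paper optimizes over $\eta\in\mathbb{R}$), which is harmless given your convention for an infinite denominator.
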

\begin{proof}
We denote by $\mathcal{D}_{n-1}:=\{(X_i, Y_i, Z_i)\}_{i=1}^{n-1}$ the previous observed data.
We observe that
\begin{align*}
    \mathbb{E}\left[\frac{\exp\!\left(-\widehat{\eta}\, l\!\left(\widehat{m}(X, Z), Y\right)\right)}
{\mathbb{E}_{\widetilde{X} \mid Z}\!\left[\exp\!\left(-\widehat{\eta}\, l\!\left(\widehat{m}(\widetilde{X}, Z), Y\right)\right)\right]}\right]&=\mathbb{E}\left[\mathbb{E}\left[\frac{\exp\!\left(-\widehat{\eta}\, l\!\left(\widehat{m}(X, Z), Y\right)\right)}
{\mathbb{E}_{\widetilde{X} \mid Z}\!\left[\exp\!\left(-\widehat{\eta}\, l\!\left(\widehat{m}(\widetilde{X}, Z), Y\right)\right)\right]}\mid Y, Z, \mathcal{D}_{n-1}\right]\right]\\&=\mathbb{E}\left[\mathbb{E}_{\widetilde{X} \mid Z}\left[\frac{\exp\!\left(-\widehat{\eta}\, l\!\left(\widehat{m}(X, Z), Y\right)\right)}
{\mathbb{E}_{\widetilde{X} \mid Z}\!\left[\exp\!\left(-\widehat{\eta}\, l\!\left(\widehat{m}(\widetilde{X}, Z), Y\right)\right)\right]}\right]\right]=1.
\end{align*}\label{rmk:depend_y_z}
The term $\mathbb{E}_{\widetilde{X} \mid Z}\left[\frac{\exp\!\left(-\widehat{\eta}\, l\!\left(\widehat{m}(X, Z), Y\right)\right)}
{\mathbb{E}_{\widetilde{X} \mid Z}\!\left[\exp\!\left(-\widehat{\eta}\, l\!\left(\widehat{m}(\widetilde{X}, Z), Y\right)\right)\right]}\right]$ simplifies to 1 for every choice of $\widehat{\eta}(Y, Z)$ dependent on $Y, Z, \mathcal{D}_{n-1}$.
\end{proof}
Therefore, applying Ville's inequality, we obtain valid-$\alpha$-type-I error control for the sequential threshold at $1/\alpha$.

\subsection{Optimal e-values for the $0$-$1$ loss}\label{sect:0-1-loss}
We consider the special case of the \(0\)-\(1\) loss. 
Surprisingly, in this setting, the e-values coincide when evaluated at their respective optimal $(Y, Z)$-dependent choices of \(\lambda\) and \(\eta\). 
Define
\[
p_1
:=
P_{X\mid Y,Z}
\bigl(
m(X,Z)\neq Y
\mid Y,Z
\bigr),
\]
\[
p_0
:=
P_{\widetilde X\mid Z}
\bigl(
m(\widetilde X,Z)\neq Y
\mid Y,Z
\bigr).
\]

These correspond to the conditional probabilities of incorrect prediction when using \(X\) and \(\widetilde X\), respectively.

\begin{theorem}[$\ell_{0-1}$ loss]
Assume that the loss is given by the $0$--$1$ loss. Then, the optimal value of $\lambda$ maximizing the coin-betting e-value is
\[
\lambda_\star
=
\frac{p_1-p_0}{p_0(1-p_0)}.
\]

Similarly, the optimal $\eta$ for the exponential e-value \eqref{eq:exp_def} is
\[
\eta_\star
=
\log\!\left(
\frac{p_0(1-p_1)}{p_1(1-p_0)}
\right).
\]

With these choices, both e-values coincide and are given by
\[
E^{\eta_\star}_{\mathrm{exp}}
=
E^{\lambda_\star}_{\mathrm{cb}}
=
\begin{cases}
\dfrac{1-p_1}{1-p_0},
& m(X,Z)=Y, \\[1.2ex]
\dfrac{p_1}{p_0},
& m(X,Z)\neq Y.
\end{cases}
\]
\end{theorem}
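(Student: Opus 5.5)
The plan is to condition on $(Y,Z)$ throughout; this is allowed because, by Lemma~\ref{lemm:y_z_measu}, $\lambda$ and $\eta$ may be $\sigma(Y,Z)$-measurable. Under the $0$-$1$ loss, the statistic $L:=l(m(X,Z),Y)=\ind\{m(X,Z)\neq Y\}$ is, conditionally on $(Y,Z)$, a Bernoulli$(p_1)$ variable under the alternative. The dummy loss $\widetilde L:=\ind\{m(\widetilde X,Z)\neq Y\}$ is Bernoulli$(p_0)$. I will work with the derandomized form of both e-values, i.e.\ with the expectation over $\widetilde X\mid Z$ taken inside, as in Appendix~\ref{app:algo}. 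Each e-value then becomes a function of the single bit $L$. Its conditional e-power is a two-term expression in $p_0,p_1$ and the parameter, which can be maximized explicitly for each fixed $(Y,Z)$.

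\textbf{Coin-betting.} Derandomization gives $E_{\mathrm{cb}}^\lambda=1+\lambda(p_0-L)$, so that
\[
\mathbb E[\log E_{\mathrm{cb}}^\lambda\mid Y,Z]=(1-p_1)\log(1+\lambda p_0)+p_1\log\bigl(1-\lambda(1-p_0)\bigr).
\]
This is strictly concave in $\lambda$. Setting the derivative to zero gives a linear equation, $(1-p_1)p_0\bigl(1-\lambda(1-p_0)\bigr)=p_1(1-p_0)(1+\lambda p_0)$, whose solution has magnitude $|p_1-p_0|/(p_0(1-p_0))$. The sign depends on the orientation convention for the loss difference in $E_{\mathrm{cb}}^\lambda$; I will align it with the statement. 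I will then check that this $\lambda$ lies in the admissible range, so that $1+\lambda(p_0-L)\ge 0$. Substituting back gives $1+\lambda p_0=(1-p_1)/(1-p_0)$ when $L=0$ and $1-\lambda(1-p_0)=p_1/p_0$ when $L=1$.

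\textbf{Exponential.} Write $c=e^{-\eta}$. Then $E^\eta_{\exp}=c^{L}/(1-p_0+p_0c)$. Its conditional e-power, $p_1\log c-\log(1-p_0+p_0c)$, is concave in $\eta$, because the log-partition term is convex. The first-order condition $p_1=p_0c/(1-p_0+p_0c)$ gives $c=p_1(1-p_0)/(p_0(1-p_1))$, which yields $\eta_\star$ as stated. The normalizer then equals $(1-p_0)/(1-p_1)$. This gives values $(1-p_1)/(1-p_0)$ and $p_1/p_0$, so the two e-values coincide.

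A cleaner framing explains why they coincide. Both optimized e-values equal the likelihood ratio of Bernoulli$(p_1)$ against Bernoulli$(p_0)$. This is the GRO for the push-forward null $\mathcal P^{\textnormal{PFX-CI}}_{K^\#}$ restricted to a binary statistic. Each parametric family is rich enough to reach it, because an arbitrary function of one bit has only two degrees of freedom, and the normalization removes one of them. The main obstacle I expect is bookkeeping rather than depth: fixing the sign convention for $\lambda$, justifying derandomization inside $E_{\mathrm{cb}}$, and handling the edge cases $p_0\in\{0,1\}$ or $p_1\in\{0,1\}$, where the optimizers degenerate.
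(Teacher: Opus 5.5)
Your proposal is correct and follows essentially the paper's route: condition on $(Y,Z)$, use the derandomized coin-betting form $1+\lambda(p_0-L)$, solve the first-order condition for each family, and substitute back to get the two values $(1-p_1)/(1-p_0)$ and $p_1/p_0$. One small point is that with this convention the linear equation fixes the sign, giving $\lambda_\star=(p_0-p_1)/(p_0(1-p_0))$; this is what the paper's proof obtains (written $\frac{p_1-p_0}{p_0(p_0-1)}$) and what your own substitution uses, so the displayed statement has a sign typo and you should not re-orient the bet to match it.
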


\begin{proof}
In this $0$--$1$ loss setting we can provide an explicit expression for the e-values
that optimize the e-power across the coin-betting and model-X classes, which
turn out to coincide.

We begin by optimizing $\eta$ for $E^\eta_{\mathrm{ML}}$. Observe that
\begin{align*}
\mathbb{E}_{X \mid Y,Z}\left[\log\left(E^\eta_{\mathrm{ML}}\right)\right]
&=
\mathbb{E}_{X \mid Y,Z}\left[
-\eta \ind_{\{m(X,Z)\neq Y\}}
-
\log
\mathbb{E}_{\widetilde{X}\mid Z}\left[
e^{-\eta \ind_{\{m(\widetilde X,Z)\neq Y\}}}
\right]
\right].
\end{align*}

Differentiating with respect to $\eta$ gives
\begin{align*}
-
\mathbb{E}_{X\mid Y,Z}\!\left[\ind_{\{m(X,Z)\neq Y\}}\right]
+
\frac{
\mathbb{E}_{\widetilde{X}\mid Z}\left[
\ind_{\{m(\widetilde X,Z)\neq Y\}}
e^{-\eta \ind_{\{m(\widetilde X,Z)\neq Y\}}}
\right]
}{
\mathbb{E}_{\widetilde{X}\mid Z}\left[
e^{-\eta \ind_{\{m(\widetilde X,Z)\neq Y\}}}
\right]
}.
\end{align*}

Let
\[
p_1 = P_{X\mid Y,Z}(m(X,Z)\neq Y \mid Y,Z),
\qquad
p_0 = P_{\widetilde X\mid Z}(m(\widetilde X,Z)\neq Y \mid Y,Z).
\]

Note that
\[
\mathbb{E}_{\widetilde{X}\mid Z}\left[
\ind_{\{m(\widetilde X,Z)\neq Y\}}
e^{-\eta \ind_{\{m(\widetilde X,Z)\neq Y\}}}
\right]
=
e^{-\eta} p_0.
\]

Thus the first-order condition becomes
\begin{align*}
-p_1
+
\frac{e^{-\eta_\star} p_0}{e^{-\eta_\star} p_0 + 1 - p_0}
=0.
\end{align*}

Solving yields
\begin{align*}
\eta_\star
=
\log
\frac{p_0(1-p_1)}{p_1(1-p_0)}.
\end{align*}

With this optimal $\eta_\star$, the e-variable $E^{\eta_\star}_{\mathrm{ML}}$
is
\begin{align*}
E^{\eta_\star}_{\mathrm{ML}}
&=
\frac{e^{-\eta_\star l(m(X,Z),Y)}}
{\mathbb{E}_{\widetilde{X}\mid Z}\left[e^{-\eta_\star l(m(\widetilde X,Z),Y)}\right]}
\\
&=
\frac{\left(e^{-\eta_\star}\right)^{\ind_{\{m(X,Z)\neq Y\}}}}
{p_0 e^{-\eta_\star} + 1 - p_0}
\\
&=
\frac{\left(\frac{p_1(1-p_0)}{p_0(1-p_1)}\right)^{\ind_{\{m(X,Z)\neq Y\}}}}
{p_0\frac{p_1(1-p_0)}{p_0(1-p_1)} + 1 - p_0}
\\
&=
\left(
\frac{p_1(1-p_0)}{p_0(1-p_1)}
\right)^{\ind_{\{m(X,Z)\neq Y\}}}
\frac{1-p_1}{1-p_0}.
\end{align*}

Therefore, there are two cases:
\[
E =
\begin{cases}
\dfrac{1-p_1}{1-p_0}, & m(X,Z)=Y, \\
\dfrac{p_1}{p_0}, & m(X,Z)\neq Y.
\end{cases}
\]

\medskip

We can proceed similarly for the coin-betting e-value. Optimizing
$\lambda$ yields
\[
\lambda_\star
=
\frac{p_1-p_0}{p_0(p_0-1)}.
\]

Substituting this value into the e-variable gives two cases.

If the prediction is correct, $Y=m(X,Z)$, then
\[
E_{\mathrm{CB}}
=
1+\lambda_\star p_0
=
1+\frac{p_1-p_0}{p_0-1}
=
\frac{1-p_1}{1-p_0}.
\]

If the prediction is incorrect,
\[
E_{\mathrm{CB}}
=
1+\frac{p_1-p_0}{p_0(p_0-1)}(p_0-1)
=
\frac{p_1}{p_0}.
\]

Thus the coin-betting and model-X optimal e-variables coincide.

\end{proof}

\subsection{E-power in the small-signal regime}

\paragraph{Exponential e-power expansion:} 
\begin{proof}
First, note that the e-power of the exponential e-value, denoted by $\mathcal{P}_{\exp}^{\eta}$, is given by
\[
\mathcal{P}_{\exp}^{\eta}
=
\mathbb{E}\!\left[\log\!\left(E_{\exp}^{\eta}\right)\right]
=
\mathbb{E}\!\left[
-\eta\, l(m(X,Z),Y)
-
\log
\mathbb{E}_{\widetilde X \mid Z}
\!\left[
e^{-\eta l(m(\widetilde X,Z),Y)}
\right]
\right].
\]

Recall that the inner expectation is formally taken with respect to
\(P_{\widetilde X\mid X,Y,Z}\). However, since
\(\widetilde X \indep (X,Y)\mid Z\), we simply write
\(P_{\widetilde X\mid Z}\) to avoid cumbersome notation.

Define
\[
\psi(\eta)
:=
\log
\mathbb{E}_{\widetilde X \mid Z}
\!\left[
e^{-\eta l(m(\widetilde X,Z),Y)}
\right].
\]

Using a Taylor expansion around \(\eta=0\),
\[
\psi(\eta)
=
\psi(0)
+
\eta \psi'(0)
+
\frac{\eta^{2}}{2}\psi''(0)
+
O(\eta^{3}).
\]

Since
\[
\psi(0)
=
\log
\mathbb{E}_{\widetilde X\mid Z}[1]
=
0,
\]
we have
\[
\psi(0)=0.
\]

Differentiating,
\[
\psi'(\eta)
=
-\frac{
\mathbb{E}_{\widetilde X \mid Z}
\!\left[
l(m(\widetilde X,Z),Y)
e^{-\eta l(m(\widetilde X,Z),Y)}
\right]
}{
\mathbb{E}_{\widetilde X \mid Z}
\!\left[
e^{-\eta l(m(\widetilde X,Z),Y)}
\right]
},
\]
and therefore
\[
\psi'(0)
=
-\mathbb{E}_{\widetilde X\mid Z}
\!\left[
l(m(\widetilde X,Z),Y)
\right].
\]

Differentiating once more yields
\begin{align*}
\psi''(\eta)
&=
\frac{
\mathbb{E}_{\widetilde X\mid Z}
\!\left[
l(m(\widetilde X,Z),Y)^2
e^{-\eta l(m(\widetilde X,Z),Y)}
\right]
\,
\mathbb{E}_{\widetilde X\mid Z}
\!\left[
e^{-\eta l(m(\widetilde X,Z),Y)}
\right]
}{
\left(
\mathbb{E}_{\widetilde X\mid Z}
\!\left[
e^{-\eta l(m(\widetilde X,Z),Y)}
\right]
\right)^2
}
\\
&\qquad
-
\frac{
\left(
\mathbb{E}_{\widetilde X\mid Z}
\!\left[
l(m(\widetilde X,Z),Y)
e^{-\eta l(m(\widetilde X,Z),Y)}
\right]
\right)^2
}{
\left(
\mathbb{E}_{\widetilde X\mid Z}
\!\left[
e^{-\eta l(m(\widetilde X,Z),Y)}
\right]
\right)^2
}.
\end{align*}

Evaluating at \(\eta=0\), we obtain
\begin{align*}
\psi''(0)
&=
\mathbb{E}_{\widetilde X\mid Z}
\!\left[
l(m(\widetilde X,Z),Y)^2
\right]
-
\left(
\mathbb{E}_{\widetilde X\mid Z}
\!\left[
l(m(\widetilde X,Z),Y)
\right]
\right)^2
\\
&=
\operatorname{Var}_{\widetilde X\mid Z}
\!\left(
l(m(\widetilde X,Z),Y)
\right)
\\
&=:
\sigma_0^2.
\end{align*}

Substituting the Taylor expansion into the e-power expression gives
\begin{align*}
\mathcal P_{\exp}^{\eta}
&=
-\eta\,
\mathbb E\!\left[l(m(X,Z),Y)\right]
\\
&\qquad
-
\mathbb E\!\left[
-\eta\,
\mathbb E_{\widetilde X\mid Z}
\!\left[
l(m(\widetilde X,Z),Y)
\right]
+
\frac{\eta^2}{2}\,
\operatorname{Var}_{\widetilde X\mid Z}
\!\left(
l(m(\widetilde X,Z),Y)
\right)
+
O(\eta^3)
\right]
\\
&=
-\eta
\Bigl(
\mathbb E[l(m(X,Z),Y)]
-
\mathbb E\bigl[
\mathbb E_{\widetilde X\mid Z}
[l(m(\widetilde X,Z),Y)]
\bigr]
\Bigr)
\\
&\qquad
-
\frac{\eta^2}{2}\,
\mathbb E
\!\left[
\operatorname{Var}_{\widetilde X\mid Z}
\!\left(
l(m(\widetilde X,Z),Y)
\right)
\right]
+
O(\eta^3).
\end{align*}

Recall that
\[
\Delta
:=
\mathbb E\bigl[l(m(\widetilde X,Z),Y)
\bigr]
-
\mathbb E[l(m(X,Z),Y)]
\]
and
\[
\sigma_0^2
:=
\mathbb E
\!\left[
\operatorname{Var}_{\widetilde X\mid Z}
\!\left(
l(m(\widetilde X,Z),Y)
\right)
\right].
\]

Then
\[
\mathcal P_{\exp}^{\eta}
=
\eta \Delta
-
\frac{\eta^2}{2}\sigma_0^2
+
O(\eta^3).
\]

Ignoring higher-order terms and maximizing the quadratic approximation yields
\[
\eta^*
=
\frac{\Delta}{\sigma_0^2},
\]
and
\[
\mathcal P_{\exp}^*
=
\frac{\Delta^2}{2\sigma_0^2}.
\]

\end{proof}

\paragraph{Coin-betting e-power expansion:} 
\begin{proof}
We recall that the coin-betting e-value is given by
\[
E_{\mathrm{cb}}^\lambda
=
1
-
\lambda\!\left(
l\!\left(\widehat{m}(X,Z),Y\right)
-
\mathbb{E}_{\widetilde X \mid Z}\!\left[
l\!\left(\widehat{m}(\widetilde X,Z),Y\right)
\right]
\right),
\]
where the expectation is taken only over $\widetilde{X}$ condition both on $Z$ and $Y$, but for simplicity, and to underline that $\widetilde{X}$ is independent of $Y$, we hide this conditioning on $Y$.
Its e-power is
\[
\mathcal{P}_{\mathrm{cb}}^\lambda
=
\mathbb{E}\!\left[\log\!\left(E_{\mathrm{cb}}^\lambda\right)\right]
=
\mathbb{E}\!\left[
\log\!\left(
1
-
\lambda\!\left(
l\!\left(\widehat{m}(X,Z),Y\right)
-
\mathbb{E}_{\widetilde X \mid Z}\!\left[
l\!\left(\widehat{m}(\widetilde X,Z),Y\right)
\right]
\right)
\right)
\right].
\]

Using the Taylor expansion
\[
\log(1+x)
=
x-\frac{x^2}{2}+O(x^3),
\]
we obtain
\[
\mathcal{P}_{\mathrm{cb}}^\lambda
=
\mathbb{E}\!\left[
\lambda D
+\frac{1}{2}\lambda^2 D^2
\right]
+
O(\lambda^3),
\]
where
\[
D
:=
\mathbb{E}_{\widetilde X \mid Z}\!\left[
l\!\left(\widehat{m}(\widetilde X,Z),Y\right)
\right]-
l\!\left(\widehat{m}(X,Z),Y\right).
\]

Recall that the predictiveness signal is
\[
\Delta
:=
\mathbb{E}[D]
=
\mathbb{E}\!\left[
\mathbb{E}_{\widetilde X \mid Z}\!\left[
l\!\left(\widehat{m}(\widetilde X,Z),Y\right)
\right]
\right]
-
\mathbb{E}\!\left[
l\!\left(\widehat{m}(X,Z),Y\right)
\right],
\]
and define
\[
\sigma_1'^2
:=
\mathbb{E}[D^2].
\]

Then
\[
\mathcal{P}_{\mathrm{cb}}^\lambda
=
\lambda \Delta
-
\frac{1}{2}\lambda^2 \sigma_1'^2
+
O(\lambda^3).
\]

Ignoring higher-order terms, the optimal value of $\lambda$ is obtained by maximizing the quadratic approximation, yielding
\[
\lambda^*
=
\frac{\Delta}{\sigma_1'^2}.
\]

Substituting this value back gives
\[
\mathcal{P}_{\mathrm{cb}}^*
=
\frac{\Delta^2}{2\sigma_1'^2}.
\]

We conclude by noting that $\sigma_1'^2=\sigma_1^2+\Delta^2$.
\end{proof}

\subsection{Antisymmetric e-value's class}

\subsubsection{Optimality of the 1 + Antisymmetric for exchangeability}

\begin{proof}
Indeed, for any e-variable \(E=h(t,\widetilde t)\), we have
\begin{align*}
h(t,\widetilde t)
&=
\frac12\bigl(h(t,\widetilde t)+h(\widetilde t,t)\bigr)
+
\frac12\bigl(h(t,\widetilde t)-h(\widetilde t,t)\bigr).
\end{align*}

The second term is antisymmetric. Moreover, for all $t$ and $\widetilde t$ the following is in the null hypothesis,
\[
P=\frac12\bigl(\delta_{t,\widetilde t}+\delta_{\widetilde t,t}\bigr),
\]
and therefore
\[
\frac12\bigl(h(t,\widetilde t)+h(\widetilde t,t)\bigr)\le 1.
\]
Verifying that $1 + \frac12\bigl(h(t,\widetilde t)-h(\widetilde t,t)\bigr)$ is an e-variable proves the claim.
\end{proof}

\subsubsection{2-Exchangeability optimality}
\label{app:exch_gro_proof}

\begin{proof}
The proof consists of a change-of-measure argument similar to that in~\citet{numeraire}. Indeed, denoting by $f_{T, \widetilde{T}}$ the density under the alternative $Q$ and by $\tilde{P}$ the measure with density
\[
p(T,\widetilde{T}) = \frac{1}{2}\bigl(f_{T, \widetilde{T}}(T,\widetilde{T}) + f_{T, \widetilde{T}}(\widetilde{T},T)\bigr)\,\mathbf{1}_{\{f_{T, \widetilde{T}}(T,\widetilde{T}) > 0\}},
\]
we obtain 
\begin{align*}
\mathbb{E}_Q\!\left[\frac{E}{E^\star}\right]
&= \mathbb{E}_Q\!\left[
\frac{1 + g(T,\widetilde{T})}
{\frac{f_{T, \widetilde{T}}(T,\widetilde{T})}{\frac{1}{2}\bigl(f_{T, \widetilde{T}}(T,\widetilde{T}) + f_{T, \widetilde{T}}(\widetilde{T},T)\bigr)}}
\right] \\
&= \mathbb{E}_{\tilde{P}}\!\left[(1 + g(T,\widetilde{T}))\,
\mathbf{1}_{\{f_{T, \widetilde{T}}(T,\widetilde{T}) > 0\}}\right] \\
&\le \mathbb{E}_{\tilde{P}}\!\left[1 + g(T,\widetilde{T})\right]
= 1.
\end{align*}

Note that there is no need for $E$ to be of the form $1+g$, since, by the very definition of an e-value, we have $\mathbb{E}_{\tilde{P}}[E]\leq 1$.

\end{proof}

\subsection{Intermediate null hypotheses proofs}
\subsubsection{Proof of Proposition \ref{prop:mx_as_cond_exch}}
\label{sect:proof_mx_as_cond_exch}
\begin{proof}
Let $P(X,Y,Z)$ be any distribution from the original representation of $\mathcal{P}^{\text{MX-CI}}_{K}$, and  $P'(\widetilde{X},X,Y,Z) := K_Z \otimes P(X,Y,Z)$, then by construction $P'(\widetilde{X}|X,Y,Z) \sim K_Z$, $X \indep \widetilde{X}|Y,Z$ and $P'(X|Y,Z) = P'(\widetilde{X}|Y,Z)$. Now, 
\begin{equation*}
   P'(\widetilde{X},X,Y,Z) = P(\widetilde{X}|X,Y,Z) \otimes P(X|Y,Z) \otimes P(Y,Z) =  K_Z \otimes K_Z \otimes P(Y,Z) = P'(X,\widetilde{X},Y,Z),
\end{equation*}
meaning that $P'$ is an element of \eqref{eq:mx_as_cond_exch}.

On the other hand, if we start with a $P'(\widetilde{X},X,Y,Z)$, from \eqref{eq:mx_as_cond_exch}, using exchangeability we immediately get $P'(X|\widetilde{X},Y,Z) = K_Z$ implying $X \indep_{P'} Y | Z$ and $P'(X|Z) = K_Z$, meaning that the marginal $P'(X,Y,Z)$ belongs to the original representation of the null.
\end{proof}

\subsubsection{Proof of GRO property for Table \ref{tab:null-distillation}}
\begin{proposition}
\label{prop:gro_property_proof}
The GRO e-variables for the null hypotheses in Table \ref{tab:null-distillation} are those provided in the table.
\end{proposition}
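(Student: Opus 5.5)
The plan is to use one template for every row of Table~\ref{tab:null-distillation}, namely the reverse-information-projection / numéraire argument already used in Appendix~\ref{app:exch_gro_proof} for Theorem~\ref{th:2_exch}. For a null $\mathcal{P}$ and a point alternative $Q$ with density $f$, I will show two things about the candidate $E^\star = f/p^\star$. First, $E^\star$ is an e-variable, i.e. $\mathbb{E}_P[E^\star]\le 1$ for all $P\in\mathcal{P}$. Second, the denominator $p^\star$ is the density of a measure $P^\star\in\mathcal{P}$, at least after restricting to $\{f>0\}$. Given these, for any other e-variable $E$,
\[
\mathbb{E}_Q\!\left[\frac{E}{E^\star}\right]=\mathbb{E}_{P^\star}\!\left[E\,\mathbf{1}_{\{f>0\}}\right]\le 1 .
\]
Jensen then gives $\mathbb{E}_Q[\log E]-\mathbb{E}_Q[\log E^\star]\le \log \mathbb{E}_Q[E/E^\star]\le 0$, which is exactly the GRO property. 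So each row reduces to identifying $P^\star$ and checking that $E^\star$ is valid.

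\textbf{Symmetrization nulls.} These are the rows $\mathcal{P}^{\mathrm{exch}}$, $\mathcal{P}^{\mathrm{exch}}_B$, $\mathcal{P}^{\text{cond exch}}_B$ and $\mathcal{P}^{\mathrm{sym}}$.
\begin{itemize}
\item The candidate $P^\star$ is the group average of $Q$ over $\mathcal{S}_{B+1}$ (or over the two-element group for $B=1$ and for the symmetry null). Its density $\frac{1}{(B+1)!}\sum_\sigma f\circ\sigma$ is permutation invariant, so $P^\star$ lies in the null.
\item For the conditional version, I apply the same averaging to the conditional density given $(Y,Z)$ and keep the $(Y,Z)$-marginal of $Q$ unchanged. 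Invariance then holds conditionally, which is what $\mathcal{P}^{\text{cond exch}}_B$ requires.
\item For $\mathcal{P}^{\mathrm{sym}}$, I work only with the law of $W=\widetilde T-T$ and average over the reflection $w\mapsto -w$. Any e-variable that depends on $(T,\widetilde T)$ only through $W$ is then covered by the same argument.
\item Validity uses the invariance of every null element: for $P$ in the null, $\mathbb{E}_P[E^\star]=\mathbb{E}_P\big[\frac{1}{(B+1)!}\sum_\sigma E^\star\circ\sigma\big]$. The symmetrized ratio $\frac{1}{(B+1)!}\sum_\sigma \frac{f\circ\sigma}{p^\star}$ equals $\mathbf{1}_{\{p^\star>0\}}\le 1$, because $p^\star$ is itself invariant.
\end{itemize}

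\textbf{Model-X and push-forward rows.} For $\mathcal{P}^{\textnormal{X-CI}}_K$, I take $P^\star = Q(Y,Z)\otimes K_Z\otimes K_Z$ on $(Y,Z,X,\widetilde X)$. By Proposition~\ref{prop:mx_as_cond_exch} this lies in the null. Since $\widetilde X$ has the same conditional law under $Q$ and under $P^\star$, the likelihood ratio $dQ/dP^\star$ is
\[
\frac{f_{X\mid Y,Z}(X\mid Y,Z)}{K_Z(X)}=\frac{f_{Y\mid X,Z}}{f_{Y\mid Z}} .
\]
Validity follows by integrating over $X\sim K_Z$ conditionally on $(Y,Z)$. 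The PFX-CI row works the same way: take $P^\star=Q(Y,Z)\otimes K^\#_{Y,Z}\otimes K^\#_{Y,Z}$ and use $f_{\widetilde T\mid Y,Z}=K^\#_{Y,Z}$.

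\textbf{Main obstacle.} I expect the hardest step to be measure-theoretic. One issue is handling sets where $f=0$ but $p^\star>0$, which is why the indicator appears and gives an inequality rather than an equality. The other is ensuring that the dominating measure is permutation invariant, so that $f\circ\sigma$ is a density with respect to the same base measure. I would fix a product base measure of Lebesgue type on the $T$-coordinates, which is invariant under coordinate permutations, and reuse it throughout. The remaining steps are routine once $P^\star$ is exhibited.
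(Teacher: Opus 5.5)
Your route is the paper's. The paper cites the num\'eraire characterization: a likelihood ratio $dQ/dP$, with $P$ in the null, that is itself an e-variable is GRO. It then checks the e-variable property row by row and leaves ``extending the conditional likelihood ratio to a full likelihood ratio'' implicit. You instead exhibit the null element $P^\star$ for each row and reprove the num\'eraire step via the change of measure used for Theorem~\ref{th:2_exch} plus Jensen. Your constructions for the model-X, push-forward and (conditional) $B$-exchangeability rows are correct. Keeping the indicator $\mathbf{1}_{\{p^\star>0\}}$ is also more careful than the paper's claim that the symmetrized expectation is exactly $1$.

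The one gap is the $\mathcal{P}^{\mathrm{sym}}$ row. That null is a set of laws of $(T,\widetilde T)$, so the GRO claim must hold against every e-variable $E(T,\widetilde T)$. Working only with the law of $W=\widetilde T-T$ proves optimality only among e-variables that factor through $W$. Symmetrizing $Q$ under the swap $(t,\widetilde t)\mapsto(\widetilde t,t)$ does not close this either: that produces $E^{\mathrm{exch}}$, which need not be an e-variable for the larger null.

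What you need is to lift your $P^\star$ while keeping the conditional law of $Q$ given $W$. Let $W\sim\tfrac12\bigl(f_W(w)+f_W(-w)\bigr)\,dw$. Given $W=w$, let $(T,\widetilde T)\sim Q(\cdot\mid W=w)$ when $f_W(w)>0$, and any law on $\{\widetilde t-t=w\}$ otherwise (a $Q$-null set of $w$'s). Because $\mathcal{P}^{\mathrm{sym}}$ constrains only the $W$-marginal, this $P^\star$ lies in the null, $Q\ll P^\star$, and $dQ/dP^\star=E^{\mathrm{sym}}(W)$ $Q$-a.s. Your template then covers all $(T,\widetilde T)$-measurable e-variables.
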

\begin{proof}
Any statistic $S$ that is $Q$-a.s. equal to the likelihood ratio of the point alternative distribution $Q$ and an element of the null hypothesis satisfies the following:\\
If $S$ is an e-variable for the null, then $S$ is a version of the GRO e-variable for testing against $Q$ \citep{numeraire}. All e-variables in Table \ref{tab:null-distillation} can be rewritten in such a form by extending the conditional likelihood ratio to a full likelihood ratio where necessary. Thus, all that remains is to check the e-variable property for each. The first null is treated in \citet{Grünwald02042024}, and the argument for the second is analogous:\\
Let $P \in \mathcal{P}^{\textnormal{PFX-CI}}_K$, then $P(T\mid Y,Z) = P(\widetilde{T}\mid Y,Z) = K^\#(\cdot\mid Y, Z)$, which has density function $f_{\widetilde{T}|Y,Z}$ by the model-X assumption on the alternative, and thus
\begin{equation*}
    \mathbb{E}_P\left[ \frac{f_{T|Y,Z}(T|Y,Z)}{f_{\widetilde{T}|Y,Z}(T|Y,Z)} \middle| Y,Z \right] = 1.
\end{equation*}
The e-variable property follows by the tower property of conditional expectations.\\
Exchangeability, conditional exchangeability and symmetry can all be proved using the same symmetry argument. We will explicitly show it for $B$-exchangeability only. For any exchangeable distribution $P(T,\widetilde{T}^B)$ and $(B+1)$-permutation $\sigma'$, we have:
\begin{align*}
    \mathbb{E}_{P(T,\widetilde{T}^B)} \left[ \frac{f_{T,\widetilde{T}^B}(T,\widetilde{T}^B)}{\frac{1}{(B+1)!} \sum_{\sigma} f_{T,\widetilde{T}^B}(\sigma(T,\widetilde{T}^B))} \right] &= \mathbb{E}_{P(\sigma'(T,\widetilde{T}^B))} \left[ \frac{f_{T,\widetilde{T}^B}(T,\widetilde{T}^B)}{\frac{1}{(B+1)!} \sum_{\sigma} f_{T,\widetilde{T}^B}(\sigma(T,\widetilde{T}^B))} \right] \\
    &= \mathbb{E}_{P(T,\widetilde{T}^B)} \left[ \frac{f_{T,\widetilde{T}^B}(\sigma'^{-1}(T,\widetilde{T}^B))}{\frac{1}{(B+1)!} \sum_{\sigma} f_{T,\widetilde{T}^B}(\sigma(T,\widetilde{T}^B))} \right]
\end{align*}
Thus, the proposed e-variable has the same expectation under $P$ for any permutation of its arguments. Summing over all $(B+1)!$ choices of $\sigma'$ gives us exactly the value $(B+1)!$, meaning that the original expectation must be exactly $1$.
\end{proof}

\subsubsection{Derandomization monotonically increases e-power}

\begin{proposition}
\label{prop:rb_monotonically_increases_power}
Let $T, \widetilde{T}^B$ all be conditionally independent given $(Y,Z)$, and $\widetilde{T}^B$ further identically distributed. Let $S(T,\widetilde{T})$ be a non-negative statistic and $S^B(T,\widetilde{T}^B):= \frac{1}{B}\sum_{i=1}^B S(T,\widetilde{T}_i)$, then the sequence

\begin{equation*}
    \mathbb{E}\left[\ln\left(S^B(T,\widetilde{T}^B)\right)\right]
\end{equation*}
is monotonically increasing in $B$.

\end{proposition}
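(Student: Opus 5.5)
The plan is to show that $S^B$ is, in the convex order, dominated by $S^{B-1}$: write $S^{B-1}$ as a conditional expectation of something equal in law to $S^B$, then apply Jensen's inequality to the concave function $\ln$. Concretely, consider the $B$ leave-one-out averages
\[
S^{B-1}_{(-j)} := \frac{1}{B-1}\sum_{i\neq j} S(T,\widetilde{T}_i), \qquad j=1,\dots,B.
\]
Averaging these over $j$ gives $\frac{1}{B}\sum_{j} S^{B-1}_{(-j)} = S^B(T,\widetilde{T}^B)$, since each $S(T,\widetilde{T}_i)$ appears in exactly $B-1$ of the leave-one-out sums.

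The first step is to note that, by concavity of $\ln$ (Jensen applied pointwise to a finite average),
\[
\ln S^B(T,\widetilde{T}^B) \ \geq\ \frac{1}{B}\sum_{j=1}^B \ln S^{B-1}_{(-j)}.
\]
The second step is to take expectations. Conditionally on $(Y,Z)$, the variables $\widetilde{T}_1,\dots,\widetilde{T}_B$ are i.i.d. and independent of $T$. Hence for every $j$ the vector $(T,(\widetilde{T}_i)_{i\neq j})$ has, conditionally on $(Y,Z)$ and therefore also jointly with $(Y,Z)$, the same law as $(T,\widetilde{T}_1,\dots,\widetilde{T}_{B-1})$. This gives
\[
\mathbb{E}\bigl[\ln S^{B-1}_{(-j)}\bigr] = \mathbb{E}\bigl[\ln S^{B-1}(T,\widetilde{T}^{B-1})\bigr]
\]
for each $j$. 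Combining with the first step yields $\mathbb{E}[\ln S^B]\geq \mathbb{E}[\ln S^{B-1}]$ for every $B\geq 2$, which is the claimed monotonicity.

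I expect the main obstacle to be integrability, since $\ln S$ can equal $-\infty$ when $S$ vanishes and the expectations might not be finite. I will handle this by working with the convention that expectations in $[-\infty,\infty)$ are allowed, provided $\mathbb{E}[(\ln S)^+]<\infty$. That condition is inherited by the averages because $\ln^+$ of an average is at most $\ln^+$ of the maximum, which is in turn at most the sum of the $\ln^+$ terms. Under this convention the Jensen step and linearity remain valid: if $\mathbb{E}[\ln S^{B-1}]=-\infty$ the inequality is trivial, and otherwise all terms are integrable. The exchangeability step needs only equality in distribution, so it requires no further assumptions beyond the stated conditional independence and identical distribution.
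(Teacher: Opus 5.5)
Your proposal is correct and follows the paper's argument: the leave-one-out identity $S^B=\frac{1}{B}\sum_{j}S^{B-1}(T,\widetilde{T}^B_{-j})$, Jensen's inequality for the concave logarithm, and the conditional i.i.d.\ assumption to identify each leave-one-out expectation with $\mathbb{E}[\ln S^{B-1}]$. Your treatment of integrability, allowing values in $[-\infty,\infty)$ under $\mathbb{E}[(\ln S)^+]<\infty$, is a useful addition that the paper's proof leaves implicit.
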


\begin{proof}
Notice that 
\begin{equation*}
    S^B(T,\widetilde{T}^B) = \frac{1}{B} \sum _{i=1}^B S^{B-1}(T,\widetilde{T}^{B}_{-i}),
\end{equation*}
where $\widetilde{T}^{B}_{-i}$ is $\widetilde{T}^{B}$ without component $i$. Therefore, by Jensen's inequality 
\begin{equation*}
    \mathbb{E}\left[\ln\left(S^B(T,\widetilde{T}^B)\right)\right] \geq \frac{1}{B} \sum_{i=1}^B \mathbb{E}\left[\ln\left(S^{B-1}(T,\widetilde{T}^B_{-i})\right)\right] = \mathbb{E}\left[\ln\left(S^{B-1}(T,\widetilde{T}^{B-1})\right)\right].
\end{equation*}
In the last equality we used the conditional independence and i.i.d. property.
\end{proof}

\subsubsection{Proof for the limiting GRO e-variable for conditional exchangeability}
\label{sect:limiting_gro_cond_exch}
\begin{proof}
Assuming that under the point alternative $Q( \widetilde{T}^B,T,Y,Z)$ with density function $f$,  $T, \widetilde{T}^B$  are all conditionally independent given $(Y,Z)$, and that the $\widetilde{T}_i$ are conditionally identically distributed, we have:
\begin{align*}
    E^{\text{cond exch}}_B(t,\widetilde{t}^B,y,z) &= \frac{f_{T|Y,Z}(t|y,z)\prod_{i=1}^B f_{\widetilde{T}|Y,Z}(\widetilde{t}_i|y,z)}{\frac{1}{B+1} (f_{T|Y,Z}(t|y,z)\underset{i=1}{\overset{B}{\prod}} f_{\widetilde{T}|Y,Z}(\widetilde{t}_i|y,z) + f_{\widetilde{T}|Y,Z}(t|y,z) \overset{B}{\underset{j=1}{\sum}} f_{T|Y,Z}(\widetilde{t}_j|y,z)  \underset{i\neq j}{\prod} f_{\widetilde{T}|Y,Z}(\widetilde{t}_i|y,z))} \\
    &= \frac{f_{T|Y,Z}(t|y,z)}{\frac{1}{B+1} (f_{T|Y,Z}(t|y,z) + f_{\widetilde{T}|Y,Z}(t|y,z) \sum _{j=1}^B \frac{f_{T|Y,Z}(\widetilde{t}_j|y,z)}{f_{\widetilde{T}|Y,Z}(\widetilde{t}_j|y,z)}) }.
\end{align*}

First, we simply factorized the joint densities into products using conditional independence under the alternative, and noticed that, by symmetry, the sum of $(B+1)!$ terms simplifies into a smaller sum of $(B+1)$ terms. To get the second equality we divide everything by the product in the numerator.
Now, note that if we evaluate this e-variable in the random vector $(T, \widetilde{T}_1,...,\widetilde{T}_B)$ and let $B$ tend to infinity, by an application of the strong law of large numbers in the denominator, the limiting random variable is $Q$-a.s. equal to 
\begin{equation*}
    \frac{f_{T|Y,Z}(T|Y,Z)}{f_{\widetilde{T}|Y,Z}(T|Y,Z) \cdot \mathbb{E}_{Q}\left[ \frac{f_{T|Y,Z}(\widetilde{T}|Y,Z)}{f_{\widetilde{T}|Y,Z}(\widetilde{T}|Y,Z)} \middle|Y,Z \right]} = \frac{f_{T|Y,Z}(T|Y,Z)}{f_{\widetilde{T}|Y,Z}(T|Y,Z)}.
\end{equation*}
\end{proof}

\subsubsection{E-power sequence of conditional exchangeability e-variables}

\begin{proposition}
\label{prop:cond_exch_e_power_seq}
The sequence of e-powers
\begin{equation*}
    \mathbb{E}_Q\left[\ln\left(E^{\textnormal{cond exch}}_B\right)\right]
\end{equation*}
is (weakly) increasing in $B$ with limit 
\begin{equation*}
    \mathbb{E}_{Q(Y,Z)}\left[D_{\text{KL}}\left(Q(T|Y,Z) || Q(\widetilde{T}|Y,Z)\right)\right],
\end{equation*}
whenever the latter expectation is finite.
\end{proposition}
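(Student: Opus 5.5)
Two things need to be shown: that the e-powers $\mathbb{E}_Q[\ln E^{\textnormal{cond exch}}_B]$ are weakly increasing in $B$, and that they converge to the conditional KL divergence. I will work conditionally on $(Y,Z)$ throughout and use the same assumptions as Appendix~\ref{sect:limiting_gro_cond_exch}. Under $Q$, the variables $T,\widetilde T_1,\dots,\widetilde T_B$ are conditionally independent given $(Y,Z)$, and the $\widetilde T_i$ are conditionally i.i.d. Write
\[
L(t):=\frac{f_{T\mid Y,Z}(t\mid Y,Z)}{f_{\widetilde T\mid Y,Z}(t\mid Y,Z)}.
\]
The computation in Appendix~\ref{sect:limiting_gro_cond_exch} then gives the explicit form
\[
E_B:=E^{\textnormal{cond exch}}_B=\frac{(B+1)\,L(T)}{L(T)+\sum_{j=1}^B L(\widetilde T_j)}.
\]

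\textbf{Monotonicity.} I will use the GRO property from Proposition~\ref{prop:gro_property_proof} together with a nesting argument. Take the e-variable $E_{B-1}$ for $\mathcal{P}^{\text{cond exch}}_{B-1}$, built from $(T,\widetilde T_1,\dots,\widetilde T_{B-1})$. It is also an e-variable for $\mathcal{P}^{\text{cond exch}}_B$: if $(T,\widetilde T^B)$ is conditionally exchangeable, then so is any sub-vector of $B$ coordinates. Since $E_B$ is GRO against $Q$ for $\mathcal{P}^{\text{cond exch}}_B$, this gives
\[
\mathbb{E}_Q[\ln E_B]\ge \mathbb{E}_Q[\ln E_{B-1}].
\]
One could instead symmetrize over which $\widetilde T_i$ is dropped, as in Proposition~\ref{prop:rb_monotonically_increases_power}, but the GRO argument is cleaner. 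The only care needed is to invoke GRO optimality in the extended-value sense when $\mathbb{E}_Q[\ln E_{B-1}]$ might be $-\infty$.

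\textbf{Upper bound.} Write $\ln E_B=\ln L(T)-\ln A_B$, where
\[
A_B:=\frac{1}{B+1}\Bigl(L(T)+\sum_{j} L(\widetilde T_j)\Bigr).
\]
Then
\[
\mathbb{E}_Q[\ln E_B]=\mathbb{E}_Q[D_{\mathrm{KL}}(Q(T\mid Y,Z)\Vert Q(\widetilde T\mid Y,Z))]-\mathbb{E}_Q[\ln A_B],
\]
and the first term is finite by assumption. To show $\mathbb{E}_Q[\ln A_B]\ge 0$, I will use that $E_B\cdot$ (the likelihood-ratio construction) is a likelihood ratio $f/p$ with $p$ the symmetrized density. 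Equivalently, $\mathbb{E}_Q[\ln E_B]$ equals the KL divergence between $Q(T,\widetilde T^B\mid Y,Z)$ and its permutation-average. By convexity of KL, this is at most
\[
\frac{1}{B+1}\,\mathbb{E}_Q\Bigl[D_{\mathrm{KL}}\bigl(Q\,\Vert\,Q\circ\sigma_{\text{swap}}\bigr)\Bigr]
\]
summed appropriately. A cleaner route: $\mathbb{E}_Q[\ln A_B\mid Y,Z]\ge 0$ follows from $\mathbb{E}_Q[\ln E_B]\le \mathbb{E}_Q[\ln L(T)]$. This holds because $L(T)$ is the GRO for the smaller null $\mathcal{P}^{\textnormal{PFX-CI}}$, and $E_B$ (evaluated with $\widetilde T^B$ drawn from the kernel) is an e-variable for that null, since PFX-CI laws are conditionally exchangeable. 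An auxiliary-randomization version of the GRO inequality covers the extra variables $\widetilde T^B$.

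\textbf{Limit (main obstacle).} Monotone plus bounded gives convergence, so it remains to show the limit is attained. This means showing $\liminf_B \mathbb{E}_Q[\ln E_B]\ge \mathbb{E}[D_{\mathrm{KL}}]$, i.e. $\mathbb{E}_Q[\ln A_B]\to 0$. Pointwise, $A_B\to \mathbb{E}_Q[L(\widetilde T)\mid Y,Z]=1$ almost surely by the SLLN, since $L(T)/(B+1)\to 0$. So $\ln A_B\to 0$ a.s., and the difficulty is exchanging limit and expectation.

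For the upper side I will use Jensen conditionally on $(Y,Z,T)$:
\[
\mathbb{E}[\ln A_B\mid Y,Z,T]\le \ln\frac{L(T)+B}{B+1}\le \frac{L(T)}{B+1}.
\]
This is not integrable in general, so instead I use $\ln x\le x-1$ only on the part involving the $\widetilde T_j$. I write
\[
\ln A_B\le \ln\Bigl(1+\tfrac{L(T)}{B+1}\Bigr)+\ln\Bigl(\tfrac{1}{B+1}\textstyle\sum_j L(\widetilde T_j)\Bigr)_+ ,
\]
and dominate the first term via $\ln(1+L(T))$. This is integrable because finite KL gives $\mathbb{E}_Q[(\ln L(T))_+]<\infty$. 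The second term is handled by uniform integrability of averages of the mean-one variables $L(\widetilde T_j)$.

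Combined with the lower bound $\mathbb{E}[\ln A_B]\ge 0$ from the previous step, this yields $\limsup_B \mathbb{E}[\ln A_B]\le 0$. Hence $\mathbb{E}[\ln A_B]\to 0$, which proves the claimed limit. Controlling this upper tail of $\ln A_B$ with only a finite-KL assumption is where I expect the real work.
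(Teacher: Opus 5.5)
Your proposal is correct and follows essentially the paper's route: monotonicity by nesting plus the GRO property, the upper bound by comparison with the PFX-CI GRO $L(T)$, and the limit from $A_B\to 1$ a.s.\ together with a uniform-integrability argument built on $\mathbb{E}_Q[\ln^+ L(T)]<\infty$ (from finite KL) and the UI of averages of the mean-one ratios $L(\widetilde T_j)$; the paper only packages this differently, as Fatou on $(\ln E_B)^+$ plus UI of $(\ln E_B)^-$, instead of proving $\mathbb{E}_Q[\ln A_B]\to 0$. Incidentally, the conditional Jensen bound you discarded already suffices if you keep the logarithm rather than linearizing: $-\ln 2\le \ln\frac{L(T)+B}{B+1}\le \ln(1+L(T))$, so dominated convergence gives $\limsup_B \mathbb{E}_Q[\ln A_B]\le 0$ directly, with no need for the SLLN or the UI step.
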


\begin{proof}
 $E^{\text{cond exch}}_B$ is an e-variable for $\mathcal{P}^{\text{cond exch}}_{B+1}$, while $E^{\text{cond exch}}_{B+1}$ is the GRO e-variable for the same null, and thus has a greater or equal e-power. Therefore, the e-power sequence weakly grows. Furthermore, in Section \ref{sect:limiting_gro_cond_exch}, we have seen that the limiting e-variable is the conditional likelihood ratio
\begin{equation*}
    E^{\textnormal{cond exch}}_{\infty} = \frac{f_{T|Y,Z}(T|Y,Z)}{f_{\widetilde{T}|Y,Z}(T|Y,Z)}.
\end{equation*}
Note that 
\begin{equation*}
    \mathbb{E}_Q\left[\ln\left(E^{\textnormal{cond exch}}_B\right)\right] \leq \mathbb{E}_{Q(Y,Z)}\left[D_{\text{KL}}\left(Q(T|Y,Z) || Q(\widetilde{T}|Y,Z)\right)\right]
\end{equation*}
must hold for all $B$. This is true because $\mathcal{P}^{\textnormal{PFX-CI}}_{K^{\#}}$ can always be equivalently represented (similar to Section \ref{sect:proof_mx_as_cond_exch} ) by adding $B$ conditionally independent dummy variables $\widetilde{T}_i \sim K^{\#}_{Y,Z}$ with same distribution under the alternative, and making the same exchangeability requirements as in $\mathcal{P}^{\textnormal{cond exch}}_B$. The addition of dummy variables does not change the GRO e-variable, but makes it immediately clear that $\mathcal{P}^{\textnormal{cond exch}}_B$ is a larger null hypothesis, and thus has a weaker GRO e-variable.
\\
What remains is to show that the e-power gap tends to $0$. Fatou's lemma implies:
\begin{equation*}
    \mathbb{E}\left[(\ln E^{\textnormal{cond exch}}_{\infty})^+\right] \leq \lim \inf_B \mathbb{E}\left[(\ln E^{\textnormal{cond exch}}_B)^+\right].
\end{equation*}
If we can further show that $(\ln E^{\textnormal{cond exch}}_B)^-$ is a uniformly integrable sequence, we will have
\begin{equation*}
    \mathbb{E}\left[(\ln E^{\textnormal{cond exch}}_{\infty})^-\right] = \lim_B \mathbb{E}\left[(\ln E^{\textnormal{cond exch}}_B)^-\right]
\end{equation*}
and the proof will be finished by subtracting this equality from Fatou's inequality. The rest of the proof is devoted to showing uniform integrability indeed holds.\\
Note that
\[
E^{\textnormal{cond exch}}_B = \frac{\frac{f_{T\mid Y,Z}(T\mid Y,Z)}{f_{\widetilde{T}\mid Y,Z}(T\mid Y,Z)}}{M_B}, \text{ with }
\qquad
M_B = \frac{1}{B+1}\left(\frac{f_{T\mid Y,Z}(T\mid Y,Z)}{f_{\widetilde{T}\mid Y,Z}(T\mid Y,Z)}
+ \sum_{i=1}^{B} \frac{f_{T\mid Y,Z}(\widetilde{T}_i\mid Y,Z)}{f_{\widetilde{T}\mid Y,Z}(\widetilde{T}_i\mid Y,Z)}\right).
\]
Since
\[
\ln E^{\textnormal{cond exch}}_B = \ln \frac{f_{T\mid Y,Z}(T\mid Y,Z)}{f_{\widetilde{T}\mid Y,Z}(T\mid Y,Z)} - \ln M_B,
\]
we have
\[
\big(\ln E^{\textnormal{cond exch}}_B\big)^- \le \left(\ln \frac{f_{T\mid Y,Z}(T\mid Y,Z)}{f_{\widetilde{T}\mid Y,Z}(T\mid Y,Z)}\right)^- + (\ln M_B)^+ .
\]
Define
\[
A_B = \frac{1}{B+1}\sum_{i=1}^{B} \frac{f_{T\mid Y,Z}(\widetilde{T}_i\mid Y,Z)}{f_{\widetilde{T}\mid Y,Z}(\widetilde{T}_i\mid Y,Z)},
\qquad\text{so that}\qquad
M_B \le \frac{f_{T\mid Y,Z}(T\mid Y,Z)}{f_{\widetilde{T}\mid Y,Z}(T\mid Y,Z)} + A_B .
\]
Using $\ln^+(a+b) \le \ln 2 + \ln^+ a + \ln^+ b$ and $\ln^+ x \le x$, we obtain
\[
\big(\ln E^{\textnormal{cond exch}}_B\big)^- \le \left|\ln \frac{f_{T\mid Y,Z}(T\mid Y,Z)}{f_{\widetilde{T}\mid Y,Z}(T\mid Y,Z)}\right| + \ln 2 + A_B .
\]
It therefore suffices to show that the right-hand side is uniformly integrable in $B$.

\emph{The term $A_B$.} Since $\widetilde{T}_1,\dots,\widetilde{T}_B$ are i.i.d.\ given $(Y,Z)$, the pairs $(\widetilde{T}_i,(Y,Z))$ share a common law, so the variables
\[
\frac{f_{T\mid Y,Z}(\widetilde{T}_i\mid Y,Z)}{f_{\widetilde{T}\mid Y,Z}(\widetilde{T}_i\mid Y,Z)}, \qquad i \ge 1,
\]
are identically distributed. They are nonnegative with
\[
\mathbb{E}\left[\frac{f_{T\mid Y,Z}(\widetilde{T}_i\mid Y,Z)}{f_{\widetilde{T}\mid Y,Z}(\widetilde{T}_i\mid Y,Z)} \,\middle|\, Y,Z\right]
= \int f_{T\mid Y,Z}(t\mid Y,Z)\,\mathrm{d}t = 1 .
\]
An identically distributed family of integrable variables is uniformly integrable, and so is the family of its averages. Thus, $\{A_B\}_{B\ge 1}$ is uniformly integrable.

\emph{The term involving $T$.} This is a single random variable, so it suffices that it is integrable under $f$. For the negative part, using $\ln x \leq x$,
\[
\mathbb{E}\left[\left(\ln \frac{f_{T\mid Y,Z}(T\mid Y,Z)}{f_{\widetilde{T}\mid Y,Z}(T\mid Y,Z)}\right)^- \,\middle|\, Y,Z\right]
\le \int_{\{t:f_{T\mid Y,Z}(t|Y,Z) < f_{\widetilde{T}\mid Y,Z}(t|Y,Z)\}} f_{\widetilde{T}\mid Y,Z}(t\mid Y,Z) \mathrm{d}t \leq 1 .
\]
For the positive part, integrability holds if and only if
\[
\mathbb{E}\left[\ln \frac{f_{T\mid Y,Z}(T\mid Y,Z)}{f_{\widetilde{T}\mid Y,Z}(T\mid Y,Z)}\right]
< \infty,
\]
which is true by assumption.

\end{proof}

\subsubsection{Limiting GRO e-variable for exchangeability}

The purpose of the proposition is to show that under regularity assumptions, a $Q$-a.s. limit of $E^{\textnormal{exch}}_B$ exists, and it has a very natural form. It is the conditional likelihood ratio of $T$ and $\widetilde{T}$, but given a function $\varphi(Y,Z)$ now, instead of the full pair $(Y,Z)$ -- which as we have seen in Section \ref{sect:limiting_gro_cond_exch} gives the limiting e-variable of $E^{\textnormal{cond exch}}_B$. The function $\varphi(Y,Z)$ essentially encodes all information that we can find out about the realized values of $(Y,Z)$ from an infinite i.i.d. sample $\widetilde{T}^{\infty}$ from $Q(\widetilde{T}|Y,Z)$. The regularity assumptions are strong and were chosen to provide a relatively simple proof of convergence rather than a very general result, and thus can likely be weakened.\\

\begin{proposition}
\label{prop:gro_for_exch_limit}
Suppose that under the point alternative $Q(\widetilde{T}^{\infty}, T, Y, Z)$, $\{\widetilde{T}_i\}_i, T$ are all independent given $Y,Z$, and $\{\widetilde{T}_i\}_i$ are further conditionally identically distributed. Furthermore, distributions $Q(T| Y, Z), Q(\widetilde{T}| Y, Z) , Q(Y, Z)$ are given by (conditional) densities $f_{T|Y,Z}, f_{\widetilde{T}|Y,Z}, f_{Y,Z}$ respectively, and $(Y,Z)$ takes values in a compact subspace of $\mathbb{R}^{d+1}$.
Write $w(t\mid y,z) \overset{def}{=} f_{T|Y,Z}(t\mid y,z)/f_{\widetilde{T}|Y,Z}(t\mid y,z)$ for the likelihood ratio (with $0/0=0$), and $\varphi(y,z) \overset{def}{=}  Q(\widetilde{T} \in \cdot|Y=y,Z=z)$.
Under the regularity assumptions
\begin{enumerate}
    \item \textbf{(Continuity and upper bound)}
          For every $t$, the maps $(y,z) \mapsto f_{T|Y,Z}(t\mid y,z)$ and
          $(y,z) \mapsto f_{\widetilde{T}|Y,Z}(t\mid y,z)$ are continuous, and there
          exists a constant $C < \infty$ s.t.
          $f_{T|Y,Z}(t \mid y,z), f_{\widetilde{T}|Y,Z}(t \mid y,z) \le C$ for all $t,y,z$.
 
    \item \textbf{(Lipschitz likelihood ratio)}
          $f_{T|Y,Z}(t\mid y,z)=0$ whenever $f_{\widetilde{T}|Y,Z}(t\mid y,z)=0$, and there is a function $L(t)\ge0$ such that for all $t,y,z,y',z'$,
          \[
              \bigl|w(t\mid y,z)-w(t\mid y',z')\bigr| \le L(t)||(y,z) - (y',z')||,
          \]
          and $\ell(y,z)\overset{def}{=}\int L(t)\, f_{\widetilde{T}|Y,Z}(t\mid y,z)\,dt<\infty$ for all $y,z$.
 
    \item \textbf{(Posterior consistency)}
          $Q(\widetilde{T}^\infty,Y,Z)$-a.s.,
          \[
              f_{Y,Z\mid \widetilde{T}^B}(\cdot,\cdot \mid \widetilde{T}^B)
              \xrightarrow{w} \Pi_{\varphi(Y,Z)}
              \quad \text{as } B \to \infty,
          \]
          where $\Pi_{\varphi(Y,Z)}$ is a probability measure supported on the
          fiber $\varphi^{-1}(\{\varphi(Y,Z)\})$.
\end{enumerate}
the limiting e-variable for conditional exchangeability is $Q(\widetilde{T}^\infty, T ,Y,Z)$-a.s.\ given by
\begin{equation*}
 \lim _{B \to \infty}E_B^{\textnormal{exch}}(T,\widetilde{T}^B) = \frac{f_{T|\varphi(Y,Z)}(T\mid\varphi(Y,Z))}{  f_{\widetilde{T}|Y,Z}(T\mid Y,Z)},
\end{equation*}
where
$f_{T|\varphi(Y,Z)}(t\mid\varphi(Y,Z)) \overset{def}{=} \int f_{T|Y,Z}(t\mid y,z)\,\Pi_{\varphi(Y,Z)}(dy,dz)$.
\end{proposition}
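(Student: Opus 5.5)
Under the alternative, conditional independence gives the joint density of $(T,\widetilde{T}^B)$ as a mixture over $(Y,Z)$:
\[
f_{T,\widetilde{T}^B}(t,\widetilde{t}^B)=\int f_{T|Y,Z}(t\mid y,z)\prod_{i=1}^B f_{\widetilde{T}|Y,Z}(\widetilde{t}_i\mid y,z)\,f_{Y,Z}(y,z)\,dy\,dz .
\]
As in Section~\ref{sect:limiting_gro_cond_exch}, the $(B+1)!$ permutations in the denominator of $E_B^{\textnormal{exch}}$ collapse to $B+1$ terms, according to which coordinate occupies the ``$T$'' slot. Write $\pi_B:=f_{Y,Z\mid\widetilde{T}^B}$ for the posterior of $(Y,Z)$ given $\widetilde{T}^B$, and $\pi_B^{(-j)}$ for the posterior given $\widetilde{T}^B$ with $\widetilde{T}_j$ removed. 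Dividing numerator and denominator by the marginal density of $\widetilde{T}^B$ gives the representation
\[
E_B^{\textnormal{exch}}=\frac{\int f_{T|Y,Z}(T\mid y,z)\,d\pi_B}{\frac{1}{B+1}\Bigl(\int f_{T|Y,Z}(T\mid y,z)\,d\pi_B+\sum_{j=1}^B \int w(\widetilde{T}_j\mid y,z)\,f_{\widetilde{T}|Y,Z}(T\mid y,z)\,d\pi_B\Bigr)} .
\]
Here the $j$-th term comes from swapping $T$ and $\widetilde{T}_j$, and is rewritten as the posterior expectation of the likelihood ratio $w(\widetilde{T}_j\mid\cdot)$ times $f_{\widetilde{T}|Y,Z}(T\mid\cdot)$. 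This uses $f_T=0$ where $f_{\widetilde{T}}=0$ (assumption~2).

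The numerator is handled by assumptions~1 and~3. By continuity and boundedness, $(y,z)\mapsto f_{T|Y,Z}(T\mid y,z)$ is a bounded continuous test function. Weak convergence of $\pi_B$ to $\Pi_{\varphi(Y,Z)}$ then gives $\int f_{T|Y,Z}(T\mid\cdot)\,d\pi_B\to f_{T|\varphi(Y,Z)}(T\mid\varphi(Y,Z))$ almost surely. Since $T$ is independent of $\widetilde{T}^\infty$ given $(Y,Z)$, the null set can be chosen outside the event involving $T$, so that this holds for $Q$-almost every $T$. In the denominator, the term $\frac{1}{B+1}\int f_{T|Y,Z}\,d\pi_B\le C/(B+1)$ vanishes.

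The remaining term is
\[
\frac{1}{B+1}\sum_j\int w(\widetilde{T}_j\mid y,z)\,f_{\widetilde{T}|Y,Z}(T\mid y,z)\,d\pi_B .
\]
First, $f_{\widetilde{T}|Y,Z}(T\mid y,z)$ depends on $(y,z)$ only through $\varphi(y,z)$, so it is constant on the fiber, and its integral against $\pi_B$ tends to $f_{\widetilde{T}|Y,Z}(T\mid Y,Z)$. Second, the true parameter lies on that fiber, and the desired limit $\frac{1}{B}\sum_j\mathbb{E}[w(\widetilde{T}_j)\mid y,z]=1$ is obtained as follows. Replace $w(\widetilde{T}_j\mid y,z)$ by $w(\widetilde{T}_j\mid Y,Z)$, incurring an error of at most $L(\widetilde{T}_j)\,\|(y,z)-(Y,Z)\|$ by assumption~2. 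The strong law of large numbers gives $\frac{1}{B}\sum_j w(\widetilde{T}_j\mid Y,Z)\to\int f_{T|Y,Z}=1$ and $\frac{1}{B}\sum_j L(\widetilde{T}_j)\to\ell(Y,Z)<\infty$. Combining the limits gives $E_B^{\textnormal{exch}}\to f_{T|\varphi}(T\mid\varphi)/f_{\widetilde{T}|Y,Z}(T\mid Y,Z)$.

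The main obstacle is the Lipschitz step. The posterior concentrates on the whole fiber $\varphi^{-1}(\varphi(Y,Z))$, not at $(Y,Z)$, so $\|(y,z)-(Y,Z)\|$ does not go to zero under $\pi_B$. To handle this, I would first show that on the fiber, where $f_{\widetilde{T}|y,z}=f_{\widetilde{T}|Y,Z}$, one still has $\int w(t\mid y,z)\,f_{\widetilde{T}|Y,Z}(t)\,dt=1$ for every fiber point. I would then apply a uniform law of large numbers over the compact parameter set: the Lipschitz envelope $L$ with $\mathbb{E}L<\infty$ yields a Glivenko--Cantelli class $\{w(\cdot\mid y,z)\}$. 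This gives
\[
\sup_{y,z}\Bigl|\frac{1}{B}\sum_j w(\widetilde{T}_j\mid y,z)-\int w(t\mid y,z)\,f_{\widetilde{T}|Y,Z}(t)\,dt\Bigr|\to 0 .
\]
The limit function of $(y,z)$ is continuous (by Lipschitzness) and equals $1$ on the fiber. Weak convergence of $\pi_B$ then finishes the argument, with the bound $C$ on $f_{\widetilde{T}}$ controlling the product. The remaining technicality is interchanging the posterior integral and the sum when the posterior uses all $\widetilde{T}_j$, including the one being swapped. Since a single observation's influence on $\pi_B$ vanishes, I would bound it by comparing $\pi_B$ with $\pi_B^{(-j)}$.
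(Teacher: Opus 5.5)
Your proposal is correct and follows the paper's proof essentially step for step. It uses the same posterior representation of $E_B^{\textnormal{exch}}$, weak convergence plus the bound $C$ for the numerator, and a uniform law of large numbers $\sup_{y,z}|\overline{w}_B(y,z)-r(y,z)|\to 0$ for $\overline{w}_B=\tfrac{1}{B}\sum_j w(\widetilde T_j\mid\cdot)$ and its conditional mean $r$ (the paper proves this directly via a finite $\delta$-net and the Lipschitz envelope $L$, which is your Glivenko--Cantelli step), followed by weak convergence applied to $f_{\widetilde T\mid Y,Z}(T\mid\cdot)\,r(\cdot)$ with $r\equiv 1$ on the fiber.

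The last ``technicality'' you raise is unnecessary, so no comparison of $\pi_B$ with $\pi_B^{(-j)}$ is needed. Your representation is already exact with the full posterior $\pi_B$, and the sum--integral interchange is just linearity. Since the uniform bound holds for all $(y,z)$ at once, it gives $\bigl|\int f_{\widetilde T\mid Y,Z}(T\mid\cdot)\,(\overline{w}_B-r)\,d\pi_B\bigr|\le C\sup_{y,z}|\overline{w}_B-r|$ however $\pi_B$ depends on the $\widetilde T_j$.
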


\begin{proof}
Throughout, $(Y,Z)$ denotes the realized value, so that given $(Y,Z)$ the $\widetilde{T}_j$ are i.i.d.\ with density $f_{\widetilde{T}|Y,Z}(\cdot\mid Y,Z)$ and $T$ has density $f_{T|Y,Z}(\cdot\mid Y,Z)$, while lowercase $(y,z)$ is an integration variable. By Assumption 2, $f_{T|Y,Z}(t\mid y,z)=w(t\mid y,z)\,f_{\widetilde{T}|Y,Z}(t\mid y,z)$ for all $t,y,z$.

\textbf{Step 1 (rewriting the e-variable).}
Using the same product decomposition as in $E_B^{\text{cond exch}}$ (see Appendix \ref{sect:limiting_gro_cond_exch}), but marginalizing over $y$ and $z$,

\begin{align*}
     &E_B^{\text{exch}}(t,\widetilde{t}^B) \\
     &=\frac{(B+1)\int f_{T|Y,Z}(t\mid y,z)\prod_{i=1}^B f_{\widetilde{T}|Y,Z}(\widetilde{t}_i\mid y,z)\, f_{Y,Z}(y,z)\,dy\,dz}{
      \int \Bigl[f_{T|Y,Z}(t\mid y,z)\overset{B}{\underset{i=1}{\prod}} f_{\widetilde{T}|Y,Z}(\widetilde{t}_i\mid y,z)
     +  f_{\widetilde{T}|Y,Z}(t\mid y,z) \overset{B}{\underset{j=1}{\sum}}f_{T|Y,Z}(\widetilde{t}_j\mid y,z) \underset{i\neq j}{\prod} f_{\widetilde{T}|Y,Z}(\widetilde{t}_i\mid y,z)\Bigr] f_{Y,Z}(y,z)\,dy\,dz.}
\end{align*}

Since $f_{T|Y,Z}(\widetilde{t}_j\mid y,z)\prod_{i\neq j}f_{\widetilde{T}|Y,Z}(\widetilde{t}_i\mid y,z)=w(\widetilde{t}_j\mid y,z)\prod_{i}f_{\widetilde{T}|Y,Z}(\widetilde{t}_i\mid y,z)$, dividing numerator and denominator by $(B+1)\int\prod_{i}f_{\widetilde{T}|Y,Z}(\widetilde{t}_i\mid y,z)f_{Y,Z}(y,z)\,dy\,dz$ and using Bayes' rule gives
\begin{equation}\label{eq:EB-rewritten}
    E_B^{\text{exch}}(t,\widetilde{t}^B)=\frac{A_B(t)}{\frac{1}{B+1}A_B(t)+D_B(t)},
\end{equation}
where
\begin{align*}
    A_B(t)&=\int f_{T|Y,Z}(t\mid y,z)\,f_{Y,Z\mid\widetilde{T}^B}(y,z\mid\widetilde{t}^B)\,dy\,dz,\\
    D_B(t)&=\int f_{\widetilde{T}|Y,Z}(t\mid y,z)\left(\frac{1}{B+1}\sum_{j=1}^B w(\widetilde{t}_j\mid y,z)\right)f_{Y,Z\mid\widetilde{T}^B}(y,z\mid\widetilde{t}^B)\,dy\,dz.
\end{align*}
From now on we evaluate at $\widetilde{t}^B=\widetilde{T}^B$.
 
\textbf{Step 2 ($A_B$-term).}
For fixed $t$, the function $(y,z)\mapsto f_{T|Y,Z}(t\mid y,z)$ is continuous and bounded by $C$ (Assumption 1). Hence, by the definition of weak convergence and Assumption 3,
\[
    A_B(t)\to\int f_{T|Y,Z}(t\mid y,z)\,\Pi_{\varphi(Y,Z)}(dy,dz)=f_{T|\varphi(Y,Z)}(t\mid\varphi(Y,Z)),
    \qquad\text{and}\qquad \frac{A_B(t)}{B+1}\le\frac{C}{B+1}\to0 .
\]
 
\textbf{Step 3 (the likelihood-ratio average converges uniformly in $(y,z)$).}
Let
\[
    \overline{w}_B(y,z)\overset{ \mathrm{def}}{=}\frac1B\sum_{j=1}^B w(\widetilde{T}_j\mid y,z),
    \qquad
    r(y,z)\overset{\mathrm{def}}{=}\int w(t'\mid y,z)\,f_{\widetilde{T}|Y,Z}(t'\mid Y,Z)\,dt',
\]
so that $r(y,z)$ is the mean of $w(\widetilde{T}_j\mid y,z)$. By Assumption 2,
$|r(y,z)-r(y',z')|\le \ell(Y,Z)\, ||(y,z) - (y',z')||$. Moreover $r(Y,Z)=\int f_{T|Y,Z}(t'\mid Y,Z)\,dt'=1$, so $r$ is bounded by $1+\ell(Y,Z)\cdot\operatorname{diam}<\infty$. We show that, almost surely,
\begin{equation}\label{eq:ULLN}
    \sup_{y,z}\,\bigl|\overline{w}_B(y,z)-r(y,z)\bigr|\to0 .
\end{equation}
Fix $\delta>0$. By compactness, the space of $(y,z)$ is covered by finitely many balls of radius $\delta$, with centres $(y_1,z_1),\dots,(y_K,z_K)$. By the strong law of large numbers (applied to finitely many i.i.d.\ sequences with finite means), almost surely
\[
    \overline{w}_B(y_k,z_k)\to r(y_k,z_k)\ \ (k=1,\dots,K),
    \qquad
    \frac1B\sum_{j=1}^B L(\widetilde{T}_j)\to\ell(Y,Z) .
\]
For any $(y,z)$, choose $k$ with $||(y,z)-(y_k,z_k)||<\delta$. Using Assumption 2 twice,
\begin{align*}
    \bigl|\overline{w}_B(y,z)-r(y,z)\bigr|
    &\le \bigl|\overline{w}_B(y,z)-\overline{w}_B(y_k,z_k)\bigr| + \bigl|\overline{w}_B(y_k,z_k)-r(y_k,z_k)\bigr| + \bigl|r(y_k,z_k)-r(y,z)\bigr| \\
    &\le \delta\,\frac1B\sum_{j=1}^B L(\widetilde{T}_j) + \max_{k\le K}\bigl|\overline{w}_B(y_k,z_k)-r(y_k,z_k)\bigr| + \delta\,\ell(Y,Z) .
\end{align*}
The right-hand side does not depend on $(y,z)$, and its $\limsup$ is $2\delta\,\ell(Y,Z)$. Since $\delta$ was arbitrary, we get \eqref{eq:ULLN}.
 
\textbf{Step 4 ($D_B$-term).}
Since $\frac{1}{B+1}\sum_{j=1}^B w(\widetilde{T}_j\mid y,z)=\frac{B}{B+1}\overline{w}_B(y,z)$, we can split
\begin{multline*}
    \frac{B+1}{B}\,D_B(t)=\int f_{\widetilde{T}|Y,Z}(t\mid y,z)\,r(y,z)\,f_{Y,Z\mid\widetilde{T}^B}(y,z\mid\widetilde{T}^B)\,dy\,dz\\
    +\int f_{\widetilde{T}|Y,Z}(t\mid y,z)\bigl(\overline{w}_B(y,z)-r(y,z)\bigr)\,f_{Y,Z\mid\widetilde{T}^B}(y,z\mid\widetilde{T}^B)\,dy\,dz.
\end{multline*}
The second term is at most $C\sup_{y,z}|\overline{w}_B(y,z)-r(y,z)|$ in absolute value, which tends to $0$ by \eqref{eq:ULLN}. In the first term, $(y,z)\mapsto f_{\widetilde{T}|Y,Z}(t\mid y,z)\,r(y,z)$ is continuous and bounded (Assumption 1 and Step 3), so by Assumption 3 it converges to $\int f_{\widetilde{T}|Y,Z}(t\mid y,z)\,r(y,z)\,\Pi_{\varphi(Y,Z)}(dy,dz)$. For every $(y,z)$ in the fiber, $Q(\widetilde{T}|Y=y,Z=z)=Q(\widetilde{T}|Y=Y,Z=Z)$, and therefore
\[
    f_{\widetilde{T}|Y,Z}(t\mid y,z)=f_{\widetilde{T}|Y,Z}(t\mid Y,Z),
    \qquad
    r(y,z)=\int w(t'\mid y,z)\,f_{\widetilde{T}|Y,Z}(t'\mid y,z)\,dt'=\int f_{T|Y,Z}(t'\mid y,z)\,dt'=1 .
\]
Since $\Pi_{\varphi(Y,Z)}$ is supported on the fiber, $D_B(t)\to f_{\widetilde{T}|Y,Z}(t\mid Y,Z)$.
 
\textbf{Step 5 (conclusion).}
The almost-sure events used in Steps 2--4 (Assumption 3, and the countably many strong laws in Step 3) do not depend on $t$. Hence, almost surely, Steps 2 and 4 hold simultaneously for all $t$, and in particular for $t=T$. Moreover $f_{\widetilde{T}|Y,Z}(T\mid Y,Z)>0$ a.s., because $T$ has density $f_{T|Y,Z}(\cdot\mid Y,Z)$, which vanishes wherever $f_{\widetilde{T}|Y,Z}(\cdot\mid Y,Z)$ does (Assumption 2). Plugging Steps 2 and 4 into \eqref{eq:EB-rewritten},
\begin{equation*}
    \lim_{B \to \infty} E_B^{\text{exch}}(T,\widetilde{T}^B)
    = \frac{\int f_{T|Y,Z}(T\mid y,z)\, \Pi_{\varphi(Y,Z)}(dy,dz)}{ f_{\widetilde{T}|Y,Z}(T\mid Y,Z) }
    = \frac{f_{T|\varphi(Y,Z)}(T\mid\varphi(Y,Z))}{  f_{\widetilde{T}|Y,Z}(T\mid Y,Z)}.
    \qedhere
\end{equation*}
\end{proof}

\subsection{Type-I error inflation}

\subsubsection{From approximate e-variables to approximate type-I error (Proposition \ref{prop:approx_e_type_I})}

\begin{proof}
Define the cumulative inflated-controlled event
$G_n
:=
\bigcap_{t=1}^n A_n.$ 
By the definition of the approximate e-process,
\begin{align*}
\mathbb{E}_{P}
\left[
E_n\ind_{G_n}
\middle|
\mathcal{F}_{n-1}
\right]
=
\ind_{G_{n-1}}
\mathbb{E}_{P}
\left[
E_n\ind_{A_n}
\middle|
\mathcal{F}_{n-1}
\right]
\leq
(1+\epsilon_n)
\ind_{G_{n-1}}E_{n-1}.
\end{align*}

Define
\begin{align*}
M_n
:=
\frac{
E_{n}\ind_{G_n}
}{
\prod_{t=1}^n(1+\epsilon_n)
}.
\end{align*}
Then $(M_n)_{n\geq0}$ is a nonnegative supermartingale with $M_0=1$. By Ville's inequality,
\begin{align*}
P
\left(
\sup_{1\leq n\leq N}M_n
\geq \frac{1}{\alpha}
\right)
\leq \alpha.
\end{align*}

Let
\begin{align*}
C_N
:=
\prod_{t=1}^N(1+\epsilon_n).
\end{align*}
On the event $G_N$, for every $n\leq N$,
\begin{align*}
E_{n}
&\leq
C_N M_n.
\end{align*}
Consequently,
\begin{align*}
\left\{
\sup_{1\leq n\leq N}
E_{n}
\geq \frac{1}{\alpha}
\right\}\cap G_N
&\subseteq
\left\{
\sup_{1\leq n\leq N}
M_n
\geq
\frac{1}{\alpha C_N}
\right\}.
\end{align*}
Applying Ville's inequality at level $\alpha C_N$ gives
\begin{align*}
P
\left(
\sup_{1\leq n\leq N}
E_n
\geq \frac{1}{\alpha},
\ G_N
\right)
&\leq
\alpha C_N.
\end{align*}

It remains to control the probability that at least one of the events $A_n$ fails. By the probability assumption on the approximate e-process definition and union bound,
\begin{align*}
P(G_N^c)
=
P
\left(
\bigcup_{n=1}^N A_n^c
\right)
\
\leq
\sum_{n=1}^N
P(A_n^c)
\
\leq
\sum_{n=1}^N\delta_n.
\end{align*}
Therefore,
\begin{align*}
P
\left(
\sup_{1\leq n\leq N}
E_n
\geq \frac{1}{\alpha}
\right)
&\leq
\sum_{n=1}^N\delta_n
+
\alpha
\prod_{t=1}^N(1+\epsilon_n).
\end{align*}

Finally, since $\log(1+x)\leq x$ for all $x\geq0$,
\begin{align*}
\prod_{t=1}^N(1+\epsilon_n)
=
\exp\left(
\sum_{n=1}^N\log(1+\epsilon_n)
\right)\leq
\exp\left(
\sum_{n=1}^N\epsilon_n
\right).
\end{align*}
This proves the result.
\end{proof}

\subsubsection{Double robustness}

\begin{proof}
We prove the three statements separately.

\paragraph{(i) Lipschitz convergence.}

%

By the Kantorovich--Rubinstein dual representation of the Wasserstein distance, we have that for every Lipschitz function $f$

$$
\left|
\int f\,dQ-\int f\,dP
\right|
\leq
\operatorname{Lip}(f)\mathcal W_1(Q,P).
$$

By assumption, $x\mapsto r_n(x,z)$
is $b_n$-Lipschitz uniformly on $z, y$. Therefore, for any $z, y$, we have that

\begin{align*}
\left|
\mathbb E[r_n(\widetilde X'_n,z, y)]
-
\mathbb E[r_n(X,z, y)]
\right|
&\leq
b_n(z, y)\mathcal W_1(K_n,K)
\\
&=:
a_nb_n.
\end{align*}


\paragraph{(ii) Uniform convergence.} Fix $z$ and $y$. Since $
\sup_x|r_n(x,z, y)|\leq b_n(z, y),
$
we have
$
\|r_n(\cdot,z, y)\|_\infty\leq b_n.
$

Total variation gives
$$
\left|
\int f\,dQ-\int f\,dP
\right|
\leq
2\|f\|_\infty
\mathrm d_{\mathrm{TV}}(P,Q).
$$

Applying this to $f=r_n(,\cdot,z)$, we obtain

$$
\begin{aligned}
\left|
\mathbb E[r_n(\widetilde X'_t,z, y)]
-
\mathbb E[r_n(X,z, y)]
\right|
&\leq
2b_n(z, y)\mathrm d_{\mathrm{TV}}(K_n, K)
\\
&=
2a_nb_n(z, y).
\end{aligned}
$$

\paragraph{(iii) $L_2$ convergence.} the chi-squared divergence $\mathrm d_{\chi^2}$ is defined as

$$
\mathrm d_{\chi^2}(P\Vert Q)
:=
\left(
\int
\left(
\frac{dP}{dQ}-1
\right)^2dQ
\right)^{1/2},
\qquad P\ll Q.
$$

Suppose that $
K_n\ll K
$ and let $(X, Z, Y)$ sampled from $P_\star(X, Z, Y)$ and $(\widetilde{X}', Z, Y)$ from $P_n(\widetilde{X}', Z, Y)$. Denote by

$$
D_n(x,z, y)
:=\frac{dP_n(\widetilde{X}', Z, Y)}{dP_\star(X, Z, Y)}(x, z, y)=
\frac{dK_n(\,\cdot\mid z)}
     {dK(\,\cdot\mid z)}(x),
$$
the Radon-Nikodym derivative.

Then $\mathbb E_{K_n}[r_n]
=
\mathbb E_{K}[D_nr_n].
$ Therefore, we have that,

$$
\mathbb E_{P_\star}[r_n]-\mathbb E_{P_n}[r_n]
=
\mathbb E_{P_\star}[(D_n-1)r_n].
$$

By the Cauchy--Schwarz inequality and the total expectation law, we obtain that

\begin{align*}
\left|
\mathbb E_{P_t}[r_n]-\mathbb E_{P_\star}[r_n]
\right|
&\leq
\left(
\mathbb E_{P_\star}\left[(D_t-1)^2\right]
\right)^{1/2}
\left(
\mathbb E_{P_\star}\left[r_n^2\right]
\right)^{1/2}
\\
&\leq
\left(
\mathbb E_{P_\star(Z)}\left[\mathbb{E}_{K}\left[(D_t-1)^2\right]\right]
\right)^{1/2}
\left(
\mathbb E_{P_\star}[r_n^2]
\right)^{1/2}
\\
&=
\sqrt{\mathbb{E}\left[\mathrm d_{\chi^2}(K_n\Vert K)^2\right]}
\,
\|r_n\|_{L_2}.
\end{align*}

By assumption, $\|r_n\|_{L_2}
\leq b_n.$ Hence, we conclude that

$$
\left|
\mathbb E_{P_t}[r_n]-\mathbb E_{P_\star}[r_n]
\right|
\leq
a_nb_n.
$$

\end{proof}

\subsubsection{Coin-betting type-I error inflation}

\begin{proof}
Fix $n$ and condition on $\mathcal F_{n-1}$, $Z_n$ and $Y_n$. Note that under the null, $
X_n\mid Z_n,Y_n,\mathcal F_{n-1}
\sim
K(\cdot\mid Z_n),
$
because $X_n\indep Y_n\mid Z_n$. By construction, we have that 
$
\widetilde X_n\mid Z_n,Y_n,\mathcal F_{n-1}
\sim
K_n(\cdot\mid Z_n).
$

Therefore, we have that
\begin{align*}
\mathbb E
\left[
E_n
\mid
\mathcal F_{n-1},Z_n,Y_n
\right]=
1+\lambda
\mathbb E_{K}
\left[
l(\widehat m_n(X,Z_n),Y_n)
\mid
\mathcal F_{n-1},Z_n,Y_n
\right]
-\lambda
\mathbb E_{K_n}
\left[
l(\widehat m_n(\widetilde X_n,Z_n),Y_n)
\mid
\mathcal F_{n-1},Z_n,Y_n
\right].
\end{align*}

Introduce the $X$-independent reference quantity $l_0(Z_n,Y_n):=l(m_0(Z_n),Y_n).$ Hence it has exactly the same value under $K$ and $K_n$, and therefore cancels from the difference. Thus
\begin{align*}
\mathbb E &
\left[
E_n
\mid
\mathcal F_{n-1},Z_n,Y_n
\right]-1
\\&=
\lambda
\Bigg[
\mathbb E_{K}
\left[
l(\widehat m_n(X,Z_n),Y_n)-l_0(Z_n,Y_n)
\mid
\mathcal F_{n-1},Z_n,Y_n
\right]
-
\mathbb E_{K_n}
\left[
l(\widehat m_n(\widetilde X_n,Z_n),Y_n)-l_0(Z_n,Y_n)
\mid
\mathcal F_{n-1},Z_n,Y_n
\right]
\Bigg].
\end{align*}

The expression in braces is precisely the difference between the expectations of the model-dependent component of the loss under the true and estimated conditional distributions. By Proposition~\ref{prop:double_robust}, on the corresponding good event $A_n$,
$$
\left|
\mathbb E_{K}
\left[
l(\widehat m_n(X,Z_n),Y_n)-l_0(Z_n,Y_n)
\right]
-
\mathbb E_{K_n}
\left[
l(\widehat m_n(\widetilde X_n,Z_n),Y_n)-l_0(Z_n,Y_n)
\right]
\right|
\leq
a_nb_n.
$$

Consequently,

$$
\mathbb E[E_t\mid\mathcal F_{n-1}]
\leq
1+\lambda a_nb_n.
$$

Thus, setting

$$
\epsilon_n:=\lambda a_nb_n,
$$

the process $(E_t)_{t\geq1}$ is an approximate e-process in the sense of Definition~\ref{def:approx_e_process}, with one-step inflation $\epsilon_n$.

We can therefore apply Proposition~\ref{prop:approx_e_type_I}. This gives
\begin{align*}
\mathbb P
\left(
\sup_{1\leq n\leq N}
E^{(n)}
\geq \frac{1}{\alpha}
\right)
\leq
\sum_{n=1}^N\delta_n
+
\alpha
\exp\left(
\sum_{n=1}^N\epsilon_n
\right)
=
\sum_{n=1}^N\delta_n
+
\alpha
\exp\left(
\lambda\sum_{n=1}^Na_nb_n
\right).
\end{align*}
Subtracting $\alpha$ from both sides yields
\begin{align*}
\mathbb P
\left(
\sup_{1\leq n\leq N}
E^{(n)}
\geq \frac{1}{\alpha}
\right)
-\alpha
&\leq
\sum_{n=1}^N\delta_n
+
\alpha
\left[
\exp\left(
\lambda\sum_{n=1}^Na_nb_n
\right)-1
\right].
\end{align*}

\end{proof}

\section{Experiments}
\label{appendix:experiments}

\subsection{Estimating the conditional distribution}\label{sect:cond_dist}

In practice, as motivated in Section \ref{sect:safety}, we usually do not have access to the conditional distribution of $X \mid Z$, nor can we sample from it directly. Nevertheless, there is substantial ongoing research addressing this issue (see e.g. \cite{barber2019robustinferenceknockoffs, zhu_deeplink,maxway}). When $X$ is discrete, it suffices to model the conditional class probabilities of $X$ given $Z$ and sample accordingly; this can be implemented, for instance, via logistic regression. 

When $X$ is continuous, the simplest option is to assume that the input is Gaussian given the remaining inputs. For this kind of parametric option, \citet{Berrett} proved convergence in total variation distance. For simplicity, we choose this option in our simulated data experiments. Nevertheless, a somewhat more general idea consists of first regressing $X$ on $Z$ and then generating samples by adding permuted residuals from the fitted regression model. This sampling idea is quite popular \citep{fisher2019all, chamma2024statistically, blain2025knockoffsfaildiagnosingfixing} since it works well in practice and is usually fast, as it only requires simple models. It generalizes the Gaussian case to homoskedastic distributions. For further discussion of the guarantees of this conditional sampling procedure, we refer to \citet{reyerolobo2025conditionalfeatureimportancerevisited}.

Finally, promising new approaches to conditional sampling could be based on foundation models \citep{qu2026tabiclv2}, which estimate the quantiles of the conditional distribution and are quite accurate for this task, as noted by \citet{izbicki2026benchmarkingtabularfoundationmodels}.


\subsection{Coin-betting can beat Exponential setting (Figure \ref{fig:exp_vs_cb})}\label{app:simulation}

In Figure~\ref{fig:exp_vs_cb} we illustrate the results from Theorem~\ref{thm:small_signal}. To do so, we consider the simple linear model
\[
Y = X + \gamma Z+\epsilon,
\qquad
X \sim \mathrm{Unif}([0,1]),
\qquad
\epsilon \sim B\times(\mathrm{Beta}(0.1, 0.1) - 0.5),
\]
for different values of \(B\). A $\mathrm{Beta}(0.1, 0.1)$ random variable is distributed in $[0, 1]$ concentrated near the boundaries. Shifting by $0.5$ centers the distribution, and multiplying by $B$ has the effect of scaling the noise.  In particular, we do not include an additional \(Z\) (i.e. we set $\gamma = 0$), and we sample \(\widetilde X \sim \mathrm{Unif}([0,1])\), i.e., \(K(X\mid Z)=\mathrm{Unif}([0,1])\). We take the model to be the oracle predictor \(m(X)=X\). To ensure boundedness of the loss, we use the clipped squared loss
\[
l(m(X),Y) = \min\bigl((X-Y)^2,1\bigr),
\]
which implies any \(\lambda \in [0,1]\) is a valid betting parameter.

On the left of Figure~\ref{fig:exp_vs_cb}, we plot the e-power, $\mathbb{E}[\log E^{\lambda^\star}_{\mathrm{cb}}]$ and $\mathbb{E}[\log E^{\eta^\star}_{\mathrm{exp}}]$, evaluated at the optimal $\lambda^\star$ and $\eta^\star$, which are selected over a large grid of possible values.
The expectations are approximated using Monte Carlo with 3000 samples. To simulate regimes with different effect sizes \(\Delta\), we vary the noise level via the parameter \(B\). When the noise level is small, the effect size is large, and it is beneficial to take larger bets; however, the coin-betting procedure is constrained by the requirement \(\lambda \leq 1\), whereas the exponential e-value can still increase through larger values of \(\eta\). In contrast, when \(B\) is large, the problem becomes harder and the effect size decreases. In this regime, consistent with the theorem, the coin-betting e-value can outperform the exponential e-value. Here we focus on the region in $B$ where $E^{\lambda^\star}_{\mathrm{cb}}$ overtakes $E^{\eta^\star}_{\mathrm{exp}}$ in e-power.

In the middle we highlight the difference of the e-powers shown on the left by plotting their ratio. 

On the right, we fix the noise level to a value in the small-signal regime ($B = 1.5$), where the coin-betting e-value has higher e-power than the exponential e-value. In this experiment, at each time step we use the true linear model and update the parameters $\eta$ and $\lambda$ in a prequential manner, as specified in Equations~\ref{eq:eta_optim} and~\ref{eq:lambda_optim}. We observe that the coin-betting evidence grows exponentially faster, indicating a clear advantage consistent with the gap in e-power shown on the left. 

\subsection{E-values considered in the experiments}\label{sect:e_values_anti_exps}

\begin{table}[ht]
  \centering
  \renewcommand{\arraystretch}{2.4}
  \caption{Summary of the considered e-values using as statistics $T, \widetilde T$ the losses $l \overset{\mathrm{def}}{=} l(m(X, Z), Y)$
and $\widetilde l \overset{\mathrm{def}}{=} l(m(\widetilde X, Z), Y)$}
  \label{tab:evalues}
  \begin{tabular}{@{}c l c@{}}
    \toprule
    & \textbf{Name} & \textbf{Form} \\
    \midrule
    \multirow{5}{*}[-4ex]{\rotatebox[origin=c]{90}{\textit{Antisymmetric}}}
    & coin-betting
      & $\displaystyle 1 + \frac{\lambda}{M}\Big( \big( (\tilde{l} - l) \wedge M \big) \vee (-M) \Big)$ \\
    & tanh
      & $\displaystyle 1 + \lambda \tanh\!\big( 20 (\tilde{l} - l) \big)$ \\
    & sign(\_preq)
      & $\displaystyle 1 + \lambda \operatorname{sign}(\tilde{l} - l)$ \\
    & exch-GRO
      & $\displaystyle \frac{2 f_{\tilde{l},l}(\tilde{l}, l)}
                           {f_{\tilde{l},l}(\tilde{l}, l) + f_{\tilde{l},l}(l, \tilde{l})}$ \\
    & sym-GRO
      & $\displaystyle \frac{2 f(\tilde{l} - l)}
                           {f(\tilde{l} - l) + f(l - \tilde{l})}$ \\
    \midrule
    & exponential
      & $\displaystyle \frac{e^{-\eta l}}{\mathbb{E}_{\tilde{X}\mid Z}\big[ e^{-\eta \tilde{l}} \big]}$ \\
    \bottomrule
  \end{tabular}
\end{table}

In Table \ref{tab:evalues} we show the analytical expressions of the e-variables used in the experiments. All the antisymmetric e-values can be derandomized by taking the expectation over $\widetilde{X}$ given $Z$.

 Except for \texttt{sign}, all hyperparameters are learned prequentially and optimized for e-power maximization.
 For the sign e-variables, we denote in the experiments as \texttt{sign\_preq} the e-process learned by optimizing $\lambda$ prequentially. 
 The \texttt{sign} method corresponds exactly to the procedure of \cite{shaer23a}, which averages over all $\lambda$ values in a grid (i.e. puts a uniform prior) instead of sequentially selecting the e-power maximizer.

\subsection{Simulation setting: e-power of antisymmetric vs exponential e-variables }\label{subsec:sim_setting}

\begin{figure}[H]
    \centering
    \includegraphics[width=1\linewidth]{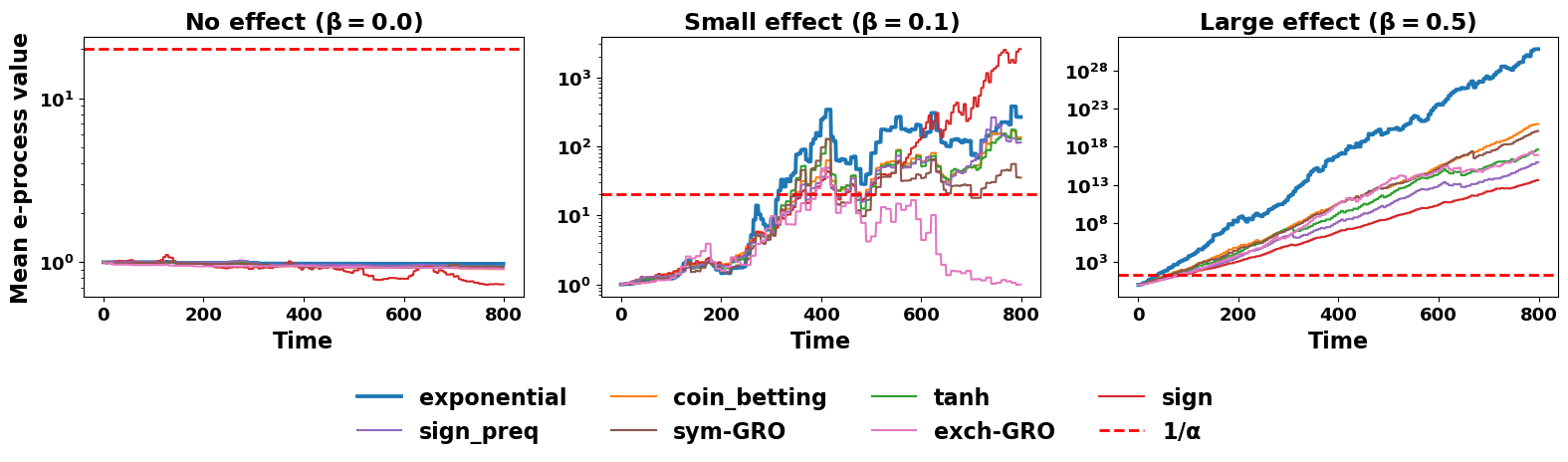}
    \caption{\textbf{Antisymmetric vs.\ exponential:} We consider the model
$
Y=(Z^\top W)^2+\beta X+\varepsilon.
$
 The parameter $\beta$ controls the predictive signal of $X$. Under a null effect ($\beta=0$), neither e-process rejects. For small signal strengths, antisymmetric sign e-process outperforms the exponential, whereas for larger signals, the exponential accumulates evidence fastest.
    }\label{fig:simulated_experiment}

\end{figure}

See Section \ref{sect:e_values_anti_exps} for a description of all e-values included in this simulation. We closely follow the simulation setting described  by \cite{shaer23a} in section 5. The data is generated according to:
\begin{equation*}
    Y=(Z^\top W)^2+\beta X+\varepsilon, \text{ } X\mid Z \sim \mathcal{N}(U^\top Z,1), \text{ } Z\sim\mathcal{N}(0,I_d),
\end{equation*}\label{eq:simulated_data_equation}
with $d=19$. The main difference compared to the original experiment is that here $U$ and $W$ are uniformly-sampled probability vectors of length $d$, as opposed to being sampled from a $d$-dimensional standard Gaussian. We introduce this change in order to decrease the variance of results across experiments, and to ensure that $\beta$ alone is a good representation of effect size in the experiment, which otherwise also depends on the magnitudes of the sampled vectors $U$ and $W$.\\
Although the true conditional distribution $X|Z$ is known exactly, this information is not fed to the e-processes. Instead, the model-X kernel is learned over time, showcasing practical safety guarantees and e-power. The employed model $\widehat{m}$ is Lasso regression. In order to avoid retraining the model after each new datapoint, a batch size $b$ is fixed, so both model retraining and e-process adaptation happens only after $b$ new datapoints are acquired. Before running the e-processes, the model $\widehat{m}$, model-X kernel, betting parameters and kernel density estimates for $E^{\text{exch}}$ and $E^{\text{sym}}$ (see Appendix \ref{appendix:hypotheses} for definitions) were tuned on $200$ pre-training datapoints. Furthermore, all e-variables are enhanced by derandomization over $B=5$ conditionally independent $\widetilde{X}$ samples from the learned model-X kernel. Kernel density estimation of the distribution of $(T, \widetilde{T}) = ((Y-\widehat{m} (X,Z))^2, (Y-\widehat{m} (\widetilde{X},Z)^2)$ is performed using only the first $\widetilde{X}$ sample for each $X$, and analogously for the distribution of $\widetilde{T}-T$. The experiment was performed independently on batches of size: [5, 10, 20], and over 20 random seeds. Figure \ref{fig:simulated_experiment} represents the average outcome over all seeds and batch sizes.

Under the null hypothesis ($\beta = 0$) no e-process rejects, confirming type-I error guarantees. The middle figure confirms that antisymmetric e-variables can outperform the exponential when the signal is small. The figure on the right-hand side shows that with large effects, the exponential e-variable dominates all others. Although $E^{\text{exch}}$ and $E^{\text{sym}}$ should theoretically dominate all other antisymmetric e-variables in the experiment, this is not observed because:
\begin{itemize}
    \item We use approximations of the true GRO e-variables $E^{\text{exch}}$ and $E^{\text{sym}}$, which require kernel density estimation, and thus might require larger (pratraining) datasets to acquire full e-power
    \item Derandomization  with $B=5$ samples is used to enhance the e-power of all e-variables, after which the growth-rate optimality of $E^{\text{exch}}$ and $E^{\text{sym}}$ no longer necessarily holds (see Appendix \ref{appendix:effects_of_derandomization} for more details on the effects of derandomization)
\end{itemize}

\subsection{Real data: HIV dataset (Figure \ref{fig:hiv_data})}\label{sect:hiv_data}

In this section we further discuss the experiment shown in the main text in Figure \ref{fig:hiv_data}, based on the HIV drug resistance dataset \footnote{Dataset available at \url{https://hivdb.stanford.edu/pages/published_analysis/genophenoPNAS2006/}} used by \citet{shaer23a} to showcase their e-values, and previously in the knockoff literature by \citet{barber2015controlling}, \citet{lu2018deeppink}, \citet{romano2020deep}, \citet{shaer2023learning}.
The task is to detect whether a mutation in the human immunodeficiency virus (HIV) is associated to an increase in drug resistance \citet{rhee2006genotypic}. The dataset contains in $d = 150$ binary features and $n = 1555$ real-valued responses. In the experiment we focus on three of these features that were deemed representative by \citet{shaer23a}: one mutation that has not been shown to affect drug resistance (12S), one mutation that has a major effect on drug resistance but for which it is difficult to collect evidence (48M), and one that has a major effect on drug resistance (47V).

In Figure \ref{fig:hiv_data} we show for three different mutations the evidence accumulated by different e-variables against the conditional independence null. 
The plotted evidence is obtained as the average evidence collected by batch-wise e-processes with batch sizes $[5, 10, 20]$, and over 50 seeds. As in the original experiment by \citet{shaer23a}, the model used is a lasso, and the sampler for $\widetilde X$ uses a probability $\widehat \pi(\cdot \mid Z)$ estimated through a logistic regression.
On the left we observe that all e-processes are unable to collect evidence against the null, since the mutation '12S' has no proven effect on drug resistance. 
In the middle plot, most antisymmetric e-variables are able to reject the null for a mutation affecting drug resistance (48M) but for which it is difficult to collect evidence. The GRO e-variable for exchangeability does not dominate the other antisymmetric e-variables due to derandomization and estimation error as discussed in Appendix \ref{appendix:effects_of_derandomization}. The exponential e-variable fails to reject the null: we attribute this to the inability of the constructed pseudo-density to capture the true conditional density $f_{Y|X,Z}$ of the data, that is, to the approximation error.

On the right all the methods we consider reach an evidence of at least $1/ \alpha$ and thus reject the null hypothesis for mutation '47V', which has been reported to have a major effect. As in the previous case, we explain the fact that the exponential e-variable collects the least evidence with model misspecification, even though we arguably find ourselves in the large effect regime.



\subsection{Triple robustness for safety guarantees} \label{sect:robustness_safety}

In this section we illustrate the experiment in Figure \ref{fig:safety} regarding the safety and robustness properties deriving from our bound. The setting is similar to Section \ref{subsec:sim_setting}: the data is again generated as described in Section 5 of \cite{shaer23a}, in this case without further changes. 
We generate 20000 data points, of which 200 triplets are used to pre-train the model and the sampler, while the rest is considered sequentially and is used to compute the martingales. 
The model is a multi-layer perceptron with two hidden layers of 50 nodes each, trained on a mean squared error loss. This loss is also used as a statistic for computing the sequential bets.
At each round we use the standard coin betting e-value
\begin{align}
\notag
E_{\mathrm{cb}}^\lambda
&=
1 + \lambda \left(
- l\!\left(m(X, Z), Y\right)
+ 
l\!\left(m(\widetilde{X}, Z), Y\right)
\right).
\label{eq:cb_def}
\end{align}
The squared loss is bounded by 5 and consequently the betting parameter $\lambda$ goes up to $0.2$.

We use a Gaussian sampler, which fits a multivariate Gaussian to the joint distribution of $(X, Z)$, and samples $X \mid Z$ using the closed form expression for a multivariate Gaussian conditional on some of its variables. 
The experiment is run with batch sizes $[5, 10, 20]$ and over 10 seeds. 

The goal of this experiment is to prove the triple robustness of the coin-betting e-value, as discussed in Section~\ref{sect:trpl_rob}. To do so, we show that the three quantities, namely $K_n, \widehat{m}_n$ and $\widehat{\lambda}$, converge as expected.
First, in the top left of Figure~\ref{fig:safety}, we display the average over seeds of the values of the betting martingales.
We observe that it does not reject the null hypothesis and that it is almost constant, so it will never explode. In the top right, we display the average of the betting parameters $\lambda_n$ chosen at timestep $n$ by GRAPA \citep{wang2026surenullbankruptcytestingbybetting}. Observe that it converges to 0, as seen in the three heatmaps in the bottom row, which display the average cumulative wealth obtained by a given $\lambda$, for each batch size and each interval of timesteps.
In the middle left, we display the average of the 1-Wasserstein distances between the estimated kernel $K_n(\cdot \mid Z)$ and the true kernel $K(\cdot \mid Z)$. We used $1000$ samples from each of $K_n(\cdot \mid Z)$ and $K(\cdot \mid Z)$ to approximate the 1-Wasserstein distance, and we show the discretization error by displaying the distance of $K$ to itself as a reference.
In the middle right, we display the average of the risk difference between the estimated model and the true model, $\mathbb E[\ell(\widehat m) - \ell(m_0)]$.
Thus, the middle-left and middle-right plots correspond to the terms $a_n$ and $b_n$ from Theorem~\ref{thm:antisymmetric_type_I}.

\begin{figure}
    \centering
    \includegraphics[width=1\linewidth]{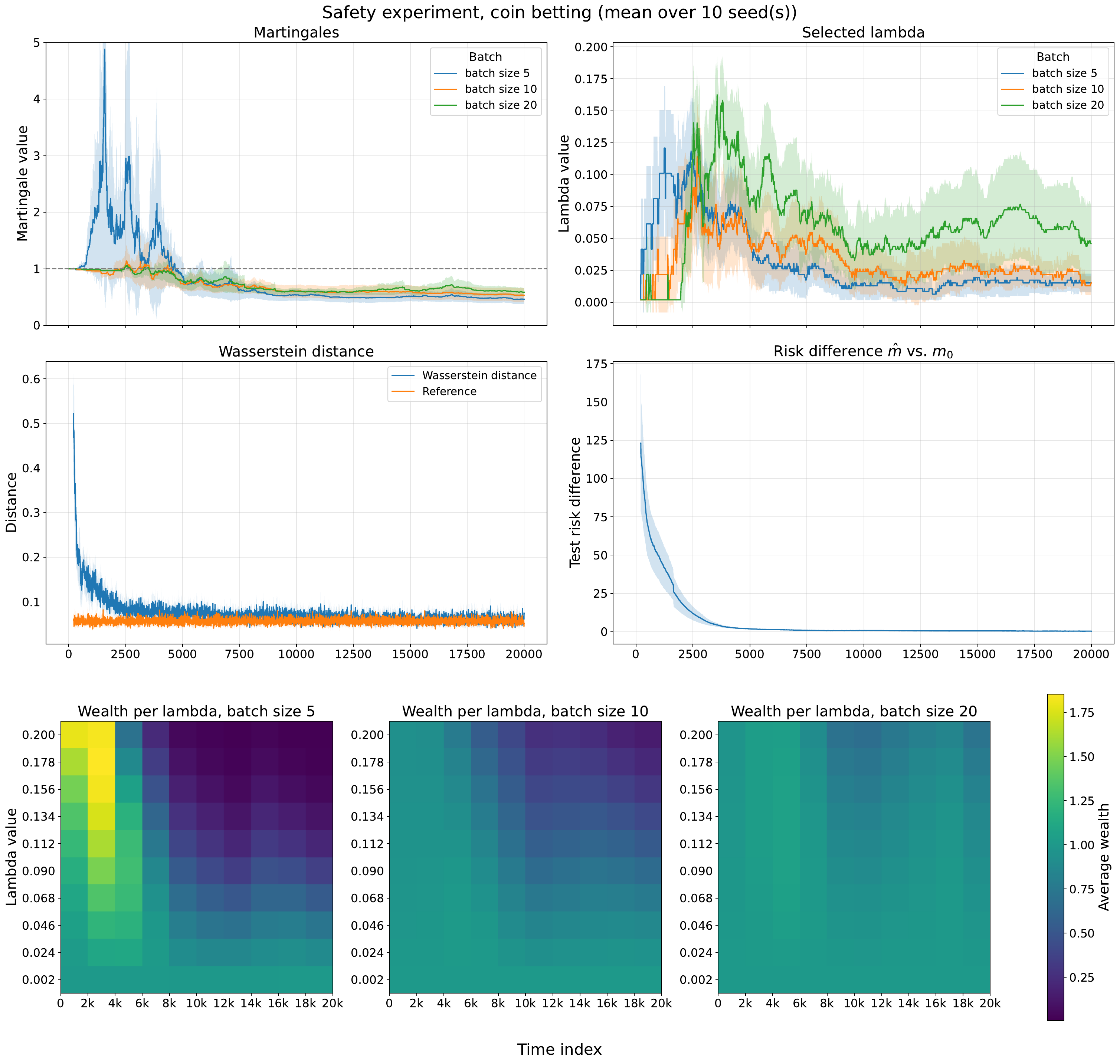}
    \caption{\textbf{Safety and robustness properties:} On the top-left the evidence collected against the null for three different batch sizes. On the top right the mean across seeds of the chosen $ \lambda$ . On the middle-left the $1$-Wasserstein distance of the estimated $K_n$ and the true conditional distribution $K$. 
    On the middle-right the value of $\mathbb E[l(\widehat m) - l(m_0)]$, where the expectation is taken over the data generating distribution and the seeds. 
    On the bottom row three heatmaps showing in each subsquare the average value of the wealth at certain timesteps for specific values of $\lambda$. We see that $\lambda \approx 0$ becomes the best value as $n$ increases.}
    \label{fig:safety}
\end{figure}

\subsection{Practical effects of derandomization}
\label{appendix:effects_of_derandomization}
The purpose of this section is to discuss the practical effects of derandomization, and highlight the fact the GRO property of $E^{\text{exch}}$ and $E^{\text{sym}}$ (see Appendix \ref{appendix:hypotheses}) generally only holds if no derandomization is performed. To eliminate the effect that learning the correct density function may have on the results of the experiment, we analyze a simple linear gaussian setting, in which the relevant densities under the alternative and GRO e-variables can be calculated explicitly.\\

We compare antisymmetric and exchangeability/symmetry GRO e-variables in the following setting. Let $X,\varepsilon \sim N(0,1)$ i.i.d., $Y=\beta X+\varepsilon$, and
fix a model estimate $\hat\beta\neq 0$. For a fresh $\widetilde{ X}\sim N(0,1)$, independent of $(X,\varepsilon)$, set
$T=(Y-\hat\beta X)^2$, $\widetilde {T}=(Y-\hat\beta\widetilde {X})^2$ and $D=\widetilde {T}-T$. In this simple setting:
\begin{itemize}
\item $Y \indep X$
\item $(T,\widetilde{T})$ is
exchangeable,
\item $D$ is symmetric,
\item $\mathbb{E}[T] = \mathbb{E}[\widetilde{T}]$, and
\item $\beta=0$
\end{itemize}
are all equivalent statements.

\paragraph{Antisymmetric e-variables.} The (non-GRO) antisymmetric e-variables considered are:
\begin{itemize}
    \item $1 + \lambda \operatorname{sign}(D)$,
    \item $1 + \lambda \tanh(D)$,
    \item $1 + \lambda \operatorname{clip}(D,-M,M)/M$.
\end{itemize}

\paragraph{The GRO e-variable(s).} Using
$U=Y-\hat\beta X = (\beta - \hat \beta)X + \varepsilon,\qquad V=Y-\hat\beta\widetilde {X}=\beta X+\varepsilon-\hat\beta\widetilde {X},$

$(U,V)$ is a \textbf{centred} bivariate normal with covariance matrix 
\begin{equation*}
   \Sigma = \left[ \begin{matrix}
\sigma_U^2 & \rho \sigma_U \sigma_V \\
\rho \sigma_U \sigma_V & \sigma_V^2
\end{matrix} \right] = \left[ \begin{matrix}
1 + (\beta-\hat\beta)^2 & 1+ \beta(\beta-\hat\beta) \\
1+ \beta(\beta-\hat\beta) & 1 + \beta^2 + \hat\beta^2
\end{matrix} \right]. 
\end{equation*}

Because $(T,\widetilde{T}) = (U^2,V^2)$ is the square of a
bivariate normal, its joint density and the density of $D$ are explicit:
\begin{align*}
f_{T,\widetilde{T}}(t,\widetilde{t}) &=  \frac{1}{2\pi\sqrt {\det \Sigma}\,\sqrt{t\widetilde{t}}}\,
\exp\!\Big(-\frac{\sigma_V^2t+\sigma_U^2\widetilde{t}}{2\det \Sigma}\Big)\cosh\!\Big(\frac{\rho \sigma_U \sigma_V\sqrt{t\widetilde{t}}}{\det \Sigma}\Big),\qquad t,\widetilde{t}>0 ,\\
f_D(d) &=\frac{1}{2\pi\sqrt {\det \Sigma}}\,\exp\!\Big(\frac{(\sigma_V^2-\sigma_U^2)\,d}{4\det \Sigma}\Big)\,K_0\!\Big(\frac{c\,|d|}{4\det \Sigma}\Big),
\end{align*}
with $c = \sqrt{(\sigma_V^2+\sigma_U^2)^2 - 4 \det \Sigma}$, and $K_0$ is the modified Bessel function of the second kind \citep{kotz2004continuous, NadarajahPogany2016}.\\
The GRO e-variables for
exchangeability and symmetry are equal, and both reduce to
\begin{equation*}
  E^{\text{exch}}(D) = \frac{2}{1+e^{-\kappa D}} =  1 + \tanh\left(\frac{\kappa D}{2}\right), \qquad \kappa= \frac{\sigma_V^2-\sigma_U^2}{2 \det \Sigma} = \frac{\beta}{\hat\beta\left(2+(\beta-\hat\beta)^2\right)}.
\end{equation*}
Exact densities remove estimation error, and  $E^{\text{exch}}(D)$ is exactly
growth-rate optimal for testing $\mathcal{P}^{\text{exch}}$ (see Appendix \ref{appendix:hypotheses}) with a single draw of $\widetilde {X}$.

\paragraph{Derandomization.} We draw $B$ independent copies $\widetilde {X}_1,\dots,\widetilde {X}_B$ per observation and
replace each e-variable by its average. For the antisymmetric family parameters $\lambda$ and $M$ can be re-tuned to derandomization, while $E^{\text{exch}}$ is optimized for $B=1$, and kept unchanged. We compare three arms at each $B$: 
\begin{itemize}
    \item antisymmetric e-variables re-tuned at $B$,
    \item antisymmetric e-variables frozen at their $B=1$ parameters -- used to isolate the effect of re-tuning,
    \item derandomized GRO.
\end{itemize}

\paragraph{Setup.} Parameters are chosen by grid search ($\lambda\in[-0.99,0.99]$, $M\in(0,10]$) on a separate
pretraining sample of size $50{,}000$ and evaluated on an independent sample of size $n=20{,}000$. We report
the e-power as mean over 10 seeds, for
correct guesses $\hat\beta=\beta$, wrong-magnitude guesses and a wrong-signed guess ($\kappa<0$). Results are in
Table~\ref{tab:derandomization} and Figure~\ref{fig:derandomization}.

\begin{table}[t]
  \centering
  \small
  \setlength{\tabcolsep}{5pt}
  \caption{E-power under the alternative, before ($B=1$) and after derandomizing over
    $B=20$ draws of $\widetilde {X}$. \emph{Frozen}  keeps the parameters tuned at $B=1$; \emph{re-tuned}
     re-tunes them at $B=20$. GRO has no parameters, so its frozen and re-tuned values coincide; both GRO versions are identical and shown once. Gains are relative to each e-variable's own
    $B=1$ e-power. Parameters are tuned on a separate pretraining sample ($50{,}000$) and e-power is measured
    on an independent sample ($n=20{,}000$); entries are means over 10 seeds, with standard errors of at most
    $0.0013$ for e-powers, $4.6$ percentage points for gains, $0.01$ for $\lambda^*$ and $0.07$ for $M^*$.
    Bold: best e-power in each setting at $B=1$ and at $B=20$.}
  \label{tab:derandomization}
  \begin{tabular}{l ccc cc cc}
    \toprule
    & \multicolumn{3}{c}{e-power} & \multicolumn{2}{c}{gain over $B=1$ (\%)} & \multicolumn{2}{c}{tuned parameters, $B=1\to20$} \\
    \cmidrule(lr){2-4} \cmidrule(lr){5-6} \cmidrule(lr){7-8}
    e-variable & $B=1$ & frozen & re-tuned & frozen & re-tuned & $\lambda^*$ & $M^*$ \\
    \midrule
    \multicolumn{8}{l}{$\beta=0.5,\ \hat\beta=0.5\quad(\kappa=0.50)$} \\
    \quad sign & 0.024 & 0.039 & 0.067 & $+62.5$ & $+181.6$ & $0.21\to0.59$ & -- \\
    \quad tanh & 0.044 & 0.066 & 0.086 & $+49.5$ & $+95.7$  & $0.48\to0.85$ & -- \\
    \quad clip & 0.053 & 0.073 & \textbf{0.089} & $+38.9$ & $+68.7$ & $0.87\to0.96$ & $4.05\to2.27$ \\
    \quad GRO  & \textbf{0.054} & \multicolumn{2}{c}{0.075} & \multicolumn{2}{c}{$+39.6$} & -- & -- \\
    \midrule
    \multicolumn{8}{l}{$\beta=1,\ \hat\beta=1\quad(\kappa=0.50)$} \\
    \quad sign & 0.079 & 0.126 & 0.221 & $+60.5$ & $+180.9$ & $0.39\to0.95$ & -- \\
    \quad tanh & 0.123 & 0.182 & \textbf{0.235} & $+48.9$ & $+92.2$ & $0.65\to0.99$ & -- \\
    \quad clip & 0.152 & 0.200 & \textbf{0.235} & $+31.1$ & $+54.5$ & $0.95\to0.99$ & $4.95\to1.75$ \\
    \quad GRO  & \textbf{0.156} & \multicolumn{2}{c}{0.205} & \multicolumn{2}{c}{$+32.0$} & -- & -- \\
    \midrule
    \multicolumn{8}{l}{$\beta=1,\ \hat\beta=0.5\quad(\kappa=0.89)$} \\
    \quad sign & 0.073 & 0.116 & 0.207 & $+59.2$ & $+183.8$ & $0.37\to0.94$ & -- \\
    \quad tanh & 0.131 & 0.189 & 0.226 & $+43.9$ & $+71.9$  & $0.77\to0.99$ & -- \\
    \quad clip & 0.143 & 0.186 & \textbf{0.230} & $+30.1$ & $+60.9$ & $0.94\to0.99$ & $2.73\to0.97$ \\
    \quad GRO  & \textbf{0.146} & \multicolumn{2}{c}{0.192} & \multicolumn{2}{c}{$+30.8$} & -- & -- \\
    \midrule
    \multicolumn{8}{l}{$\beta=1,\ \hat\beta=2\quad(\kappa=0.17)$} \\
    \quad sign & 0.056 & 0.093 & 0.172 & $+66.1$ & $+206.9$ & $0.33\to0.91$ & -- \\
    \quad tanh & 0.074 & 0.117 & 0.178 & $+59.1$ & $+142.5$ & $0.44\to0.93$ & -- \\
    \quad clip & 0.112 & 0.160 & \textbf{0.184} & $+42.4$ & $+64.0$ & $0.83\to0.97$ & $10.00\to5.30$ \\
    \quad GRO  & \textbf{0.117} & \multicolumn{2}{c}{0.161} & \multicolumn{2}{c}{$+37.3$} & -- & -- \\
    \midrule
    \multicolumn{8}{l}{$\beta=2,\ \hat\beta=1\quad(\kappa=0.67)$} \\
    \quad sign & 0.158 & 0.246 & 0.392 & $+56.3$ & $+148.6$ & $0.55\to0.99$ & -- \\
    \quad tanh & 0.237 & 0.335 & 0.386 & $+41.3$ & $+63.2$  & $0.82\to0.99$ & -- \\
    \quad clip & 0.272 & 0.332 & \textbf{0.394} & $+22.1$ & $+44.9$ & $0.98\to0.99$ & $4.15\to0.50$ \\
    \quad GRO  & \textbf{0.278} & \multicolumn{2}{c}{0.342} & \multicolumn{2}{c}{$+23.2$} & -- & -- \\
    \midrule
    \multicolumn{8}{l}{$\beta=-1,\ \hat\beta=1\quad(\kappa=-0.17)$} \\
    \quad sign & 0.031 & 0.050 & 0.081 & $+61.7$ & $+164.5$ & $-0.25\to-0.61$ & -- \\
    \quad tanh & 0.044 & 0.067 & 0.107 & $+54.1$ & $+147.3$ & $-0.35\to-0.90$ & -- \\
    \quad clip & 0.068 & 0.093 & \textbf{0.128} & $+36.6$ & $+87.3$ & $-0.79\to-0.99$ & $10.00\to4.25$ \\
    \quad GRO  & \textbf{0.070} & \multicolumn{2}{c}{0.093} & \multicolumn{2}{c}{$+33.2$} & -- & -- \\
    \bottomrule
  \end{tabular}
\end{table}

\begin{figure}[h]
    \centering
    \includegraphics[width=1.0\textwidth]{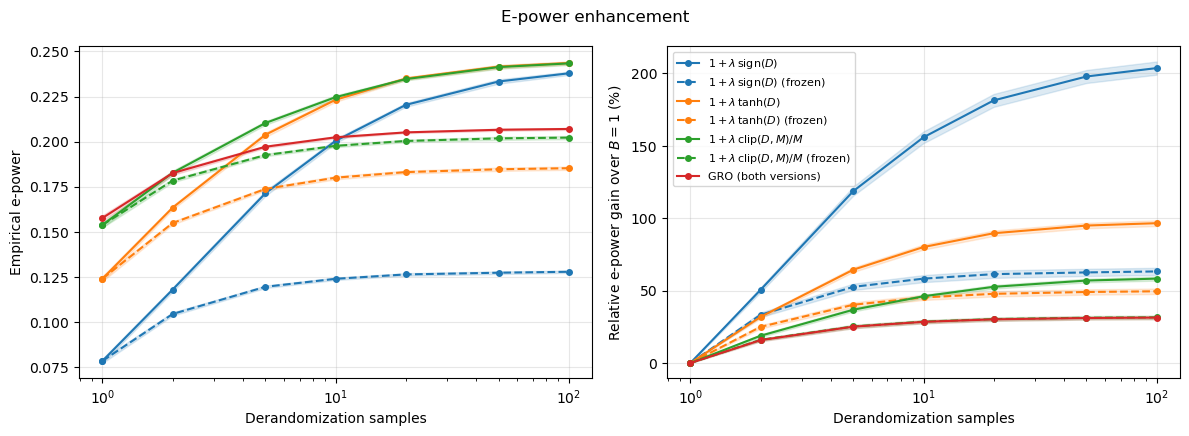}
    \caption{E-power as a function of the number of derandomization samples with $(\beta, \hat \beta) = (1,1)$, under the same setup as in Table \ref{tab:derandomization}. Full lines represent re-tuned, while dotted represent frozen parameter e-variables.}
    \label{fig:derandomization}
\end{figure}

The results clearly indicate that the GRO property of $E^{\text{exch}}$ holds at $B=1$, but is not preserved after derandomization. Furthermore, the e-power increase of antisymmetric e-variables mainly comes from the adaptation of the betting parameters. Larger $B$ implies more aggressive betting is desirable, i.e. a larger  optimal $\lambda$. Since the GRO e-variable was specifically designed for optimal betting with $B=1$, it does not have any free parameters left which could allow it to learn to make riskier bets. In practice, the GRO e-variable would be estimated via kernel density estimation methods whose parameters such as the kernel family and bandwidth could be chosen to optimize e-power instead of density estimation, and thus possibly yield a worse density approximation but larger e-power for $B>1$. We leave such questions for future research.

\end{document}